\theoremstyle{plain}
\newtheorem{theorem}{Theorem}[section]
\newtheorem{prop}[theorem]{Proposition}
\newtheorem{lemma}[theorem]{Lemma}
\newtheorem{cor}[theorem]{Corollary}
\theoremstyle{definition}
\newtheorem{definition}[theorem]{Definition}
\newcommand{\La}{\Lambda} 
\newcommand{\la}{\lambda} 
\newcommand{\al}{\alpha} 
\newcommand{\be}{\beta}
\newcommand{\Om}{\Omega} 
\newcommand{\te}{\theta} 
\newcommand{\Ga}{\Gamma}
\newcommand{\om}{\omega} 
\newcommand{\Si}{\Sigma} 
\newcommand{\ep}{\epsilon}
\newcommand{\varep}{\varepsilon}
\newcommand{\De}{\Delta}
\newcommand{\si}{\sigma}
\newcommand{\io}{\iota}
\newcommand{\de}{\delta}
\newcommand{\R}{{\mathbb R}}
\newcommand{\Z}{{\mathbb Z}}
\newcommand{\T}{{\mathbb T}}
\newcommand{\abs}[1]{\left|#1\right|}
\newcommand{\deriv}[2]{\frac{\partial #1}{\partial #2}}
\renewcommand{\C}{{\mathbb C}}
\renewcommand{\geq}{\geqslant}
\renewcommand{\leq}{\leqslant}
\newcommand{\tex}[1]{\texorpdfstring{#1}{?}}
\newenvironment{remark}{\refstepcounter{theorem}\par\medskip\noindent{\bf
Remark~\thetheorem.}}{\unskip\nobreak\hfill\hbox{ $\oslash$}\par\bigskip}
\newenvironment{example}{\refstepcounter{theorem}\par\medskip\noindent{\bf
Example~\thetheorem.}}{\unskip\nobreak\hfill\hbox{ $\oslash$}\par\bigskip}
\newcommand{\imm}{\hookrightarrow}
\newcommand{\adjustedarrow}[1]{\xhookrightarrow{\raisebox{-1.5pt}[3pt][0pt]{\ensuremath{\scriptstyle{#1}}}\,}}
\newcommand{\immrk}{\adjustedarrow{\R^k}}
\newcommand{\immrn}{\adjustedarrow{\R^n}}
\newcommand{\immtn}{\adjustedarrow{\T^n}}
\newcommand{\immG}{\adjustedarrow{G}}
\newcommand{\immTR}{\adjustedarrow{S^1\times\R}}
\newcommand{\cB}[2]{c_\mathrm{B}^{#1,#2}}
\newcommand{\cBmk}{\cB{m}{k}}
\newcommand{\cBoo}{\cB{1}{1}}
\newcommand{\cBnn}{\cB{n}{n}}
\newcommand{\cgromov}{c_{\mathrm{B}}}
\newcommand{\toricpack}{\mathcal{T}}
\newcommand{\semitoricpack}{\mathcal{ST}}
\newcommand{\sympplain}{\mathrm{Symp}}
\newcommand{\symp}{\sympplain^{2n}}
\newcommand{\sympG}{\sympplain^{2n,G}}
\newcommand{\sympT}{\sympplain^{2n,\T^n}_{\mathrm{T}}}
\newcommand{\sympST}{\sympplain^{4,S^1\times\R}_{\mathrm{ST}}}
\newcommand{\hamG}{\mathrm{Ham}^{2n, G}}
\newcommand{\hamT}{\mathrm{Ham}^{2n,\T^n}_{\mathrm{T}}}
\newcommand{\hamST}{\mathrm{Ham}^{4,S^1\times\R}_{\mathrm{ST}}}
\newcommand{\msympT}{\sympT/{\sim}_{\mathrm{T}}}
\newcommand{\msympST}{\sympST/{\sim}_{\mathrm{ST}}}
\newcommand{\mhamT}{\hamT/{\approx}_{\mathrm{T}}}
\newcommand{\mhamST}{\hamST/{\approx}_{\mathrm{ST}}}
\newcommand{\symdiff}{\ast}
\newcommand{\vol}{\mathrm{vol}}
\newcommand{\pack}{\pi}
\renewcommand{\P}{\mathcal{P}}
\newcommand{\semitoric}{\mathrm{ST}}
\newcommand{\toric}{{\mathrm{T}}}
\newcommand{\vertr}{\mathrm{Vert}(\R^2)}
\newcommand{\vect}[2]{\begin{pmatrix}#1\\#2\end{pmatrix}}
\newcommand{\Rxy}{\R[[X,Y]]}
\newcommand{\Rxyz}{\R[[X,Y]]_0}
\newcommand{\mf}{{m_f}}
\newcommand{\polyg}{\mathrm{Polyg}(\R^2)}
\newcommand{\lwpolyg}{\mathcal{LW}\polyg}
\newcommand{\stpolyg}{\mathrm{Polyg}_{\semitoric}(\R^2)}
\newcommand{\stpolygn}{\mathrm{Polyg}_{\semitoric}^N(\R^2)}
\newcommand{\stpolygnprime}{\mathrm{Polyg}_{\semitoric}^{N'}(\R^2)}
\newcommand{\pstpolyg}{\stpolyg_0}
\newcommand{\pstpolygn}{\stpolygn_0}
\newcommand{\pstpolygmfk}{\mathrm{Polyg}_{\semitoric}^{\mf, \vec{k}}(\R^2)_0}
\newcommand{\stpolygmfk}{\mathrm{Polyg}_{\semitoric}^{\mf, \vec{k}}(\R^2)}
\newcommand{\ingred}{\mathbb{I}}
\newcommand{\ingredmfk}{\mathbb{I}_{\mf, \vec{k}}}
\begin{document}

\title{{\bf Symplectic $G$\--capacities and integrable systems}}

\author{Alessio Figalli \,\,\,\,\,\,\, Joseph Palmer \,\,\,\,\,\,\, \'Alvaro Pelayo}
\date{}
\maketitle

\begin{abstract}
For any Lie group $G$, we construct a $G$\--equivariant analogue of
symplectic capacities and give examples when $G=\T^k\times\R^{d-k}$, in
which case the capacity is an invariant of integrable systems.
Then we study the continuity of these capacities, using the natural topologies
on the symplectic $G$\--categories on which they are defined.
\end{abstract}

\section{Introduction}\label{sec_intro}
In the 1980s Gromov proved the 
symplectic non-squeezing theorem~\cite{Gr1985}.
This influential result says 
that a ball of radius $r>0$ can be symplectically embedded 
into a cylinder of radius $R>0$ only if $r\leq R$.  This 
led to the first symplectic 
capacity, the \emph{Gromov radius}, which is the radius of the largest  ball of the same dimension which 
can be symplectically embedded into a symplectic manifold $(M,\omega)$. Symplectic capacities are a class of 
symplectic invariants introduced by Ekeland and Hofer~\cite{EkHo1989, Hocap1990}. 

In this paper we give a notion of symplectic
capacity for symplectic $G$\--manifolds, where $G$ is any Lie group,
which we call a \emph{symplectic $G$\--capacity}, and give
nontrivial examples.
Such a capacity retains
the properties of a symplectic capacity
(monotonicity, conformality, and an analogue of non-triviality)
with respect to symplectic $G$\--embeddings.
Symplectic capacities are examples of symplectic $G$\--capacities
in the case that $G$ is trivial. 
In analogy with symplectic capacities, symplectic $G$\--capacities 
distinguish the symplectic $G$\--type of symplectic 
$G$\--manifolds.
As a first example we construct an equivariant analogue of the Gromov radius
where $G=\R^k$ as follows.
Let $\sympG$ denote the category of $2n$\--dimensional symplectic $G$\--manifolds.
That is, an element of $\sympG$ is a triple $(M,\om,\phi)$ where $(M,\om)$
is a symplectic manifold and $\phi\colon G\times M \to M$ is a symplectic
$G$\--action.
Given integers $0\leq k\leq m\leq n$ we define
the \emph{$(m,k)$\--equivariant Gromov radius}
\begin{align}
 \label{eqn_eqvgromov} \cBmk \colon &\sympplain^{2n, \R^k}  \to [0,\infty]\\
 \nonumber &(M,\om,\phi) \mapsto\mathrm{sup} \{\,r>0 \mid \mathrm{B}^{2m}(r) \immrk M \,\},
\end{align}
where $\immrk$ denotes a symplectic $\R^k$\--embedding
and $\mathrm{B}^{2m}(r)\subset\C^m$ is the standard $2m$\--dimensional ball
of radius $r>0$ with $\R^k$\--action
given by rotation of the first $k$ coordinates.

\begin{prop}\label{propintro_equivgromov}
 If $k\geq 1$, the $(m,k)$\--equivariant Gromov radius 
 $\cBmk \colon \sympplain^{2n, \R^k} \to [0,\infty]$
 is a symplectic $\R^k$\--capacity.
\end{prop}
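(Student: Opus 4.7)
The plan is to verify the three defining axioms of a symplectic $\R^k$\--capacity for $\cBmk$: monotonicity, conformality, and the equivariant analogue of non\--triviality.

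\textbf{Monotonicity and conformality} follow directly from manipulating the defining embeddings. Given a symplectic $\R^k$\--embedding $\psi \colon M_1 \immrk M_2$, composing any equivariant embedding $\mathrm{B}^{2m}(r) \immrk M_1$ with $\psi$ produces one into $M_2$, so taking the supremum over $r$ gives $\cBmk(M_1) \leq \cBmk(M_2)$. For conformality, the scaling map $s_\lambda \colon \C^m \to \C^m$, $z \mapsto \lambda z$, commutes with the rotation of the first $k$ coordinates (hence is $\R^k$\--equivariant) and satisfies $s_\lambda^* \om_{\mathrm{std}} = \lambda^2 \om_{\mathrm{std}}$. Precomposing an equivariant embedding $\mathrm{B}^{2m}(r) \immrk (M, \om)$ with $s_{1/\lambda}$ produces an equivariant embedding $\mathrm{B}^{2m}(\lambda r) \immrk (M, \lambda^2 \om)$, yielding the expected scaling of $\cBmk$.

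\textbf{Non\--triviality} is where the hypothesis $k \geq 1$ matters. As a reference space I would take the cylinder $Z^{2n}(R) := \mathrm{B}^2(R) \times \C^{n-1}$ with the standard $\R^k$\--action rotating the first $k$ coordinates. The standard inclusion $\mathrm{B}^{2m}(r) \immrk Z^{2n}(R)$ for $r \leq R$ is manifestly equivariant, giving $\cBmk(Z^{2n}(R)) \geq R > 0$. For the matching upper bound, let $\psi \colon \mathrm{B}^{2m}(r) \immrk Z^{2n}(R)$ be any equivariant symplectic embedding. Both $\R^k$\--actions are Hamiltonian, and since $\R^k$ is connected, equivariance forces the moment map of the first $S^1$\--factor to be preserved up to an additive constant: $\pi\abs{w_1 \circ \psi}^2 = \pi\abs{z_1}^2 + c$ for some $c \in \R$. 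Evaluating at $0 \in \mathrm{B}^{2m}(r)$, which must map to a fixed point of the action on $Z^{2n}(R)$ and therefore to a point with $w_1 = 0$, gives $c = 0$. Hence $\abs{w_1 \circ \psi} = \abs{z_1} < R$ on all of $\mathrm{B}^{2m}(r)$, which forces $r \leq R$.

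\textbf{The main obstacle} is precisely this upper bound. Gromov's classical non\--squeezing theorem is not directly applicable since $m$ need not equal $n$: when $m < n$, a low\--dimensional ball can be symplectically squeezed into the $\C^{n-1}$ factor of the cylinder, so the non\--equivariant Gromov radius of $Z^{2n}(R)$ would be infinite. The argument above sidesteps this entirely by exploiting equivariance under a single circle subfactor of $\R^k$ to preserve the Hamiltonian $\pi\abs{w_1}^2$ controlling the cylinder direction; this is precisely why the hypothesis $k \geq 1$ is needed.
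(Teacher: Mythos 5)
Your treatment of monotonicity and conformality is fine and matches the paper (which simply declares them immediate). You have also correctly identified the non-trivial content: the upper bound $\cBmk(\mathrm{Z}^{2n}) \leq 1$, which cannot be obtained from classical non-squeezing when $m<n$. However, the argument you give for this bound has a genuine gap.

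The definition of a symplectic $\R^k$\--embedding in this paper allows the two actions to be intertwined only up to an automorphism $\La \in \mathrm{Aut}(\R^k) = \mathrm{GL}_k(\R)$: one requires $\rho(\phi_\mathrm{B}(g,p)) = \phi_\mathrm{Z}(\La(g),\rho(p))$, not strict intertwining. Under such an embedding the moment maps do not satisfy $\mu_\mathrm{Z}\circ\psi = \mu_\mathrm{B} + c$; the correct relation is $\mu_\mathrm{Z}\circ\psi = (\La^t)^{-1}\mu_\mathrm{B} + c$. Projecting to the first coordinate and using that $\psi(0)$ is fixed (so $c=0$) gives $\abs{w_1\circ\psi}^2 = \sum_{j=1}^k \eta_j \abs{z_j}^2$, where $\eta = \La^{-1}e_1$ is a nonzero vector whose entries must be non-negative but which a priori could all be small. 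So the bound you extract is $r^2 \max_j\eta_j \leq R^2$, which does not yield $r\leq R$ without further information on $\eta$. Your identity $\pi\abs{w_1\circ\psi}^2 = \pi\abs{z_1}^2 + c$ is only the special case $\La=\mathrm{id}$.

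The moment-map route can be repaired, but it requires an additional lemma: since both actions factor through $\T^k$ with generic stabilizer the lattice $L=\pi\Z^k$, and $\psi$ carries generic $k$\--torus orbits bijectively onto generic $k$\--torus orbits, the induced map $\R^k/L\to\R^k/L$, $[g]\mapsto[\La g]$, must be a bijection, forcing $\La\in\mathrm{GL}_k(\Z)$. Then $\eta=\La^{-1}e_1$ has non-negative integer entries, not all zero, so some $\eta_{j_0}\geq 1$ and your inequality closes. You would need to supply this step. The paper instead sidesteps moment maps entirely: it picks a point $w$ supported in a single coordinate $j_0$ with $\eta_{j_0}\neq 0$ and $\abs{w_{j_0}}>1$, shows its orbit circle maps under $\rho$ onto the orbit circle of $u=\rho(w)$, and then compares the symplectic areas of the two disks bounded by these circles using that $\om_\mathrm{Z}$ is closed and $\pi_2(\mathrm{Z}^{2n})=0$, arriving at $\abs{w_{j_0}}=\abs{u_1}<1$, a contradiction. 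That route needs only injectivity of $\rho$ and never invokes the integrality of $\La$.
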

We prove Proposition~\ref{propintro_equivgromov} in Section~\ref{sec_gromov}.
Thanks to the added structure of the $\R^k$\--action the proof is elementary.
As an application of symplectic $G$\--capacities to integrable systems we define
the \emph{toric packing capacity}
\begin{align}
 \toricpack\colon  \sympT &\to [0, \infty]\label{eqn_toricpack}\\
                   (M, \om, \phi) &\mapsto \left( \frac{\sup\{\,\vol(P) \mid P \textrm{ is a toric ball packing of }M\,\}}{\vol(\mathrm{B}^{2n})}\right)^{\frac{1}{2n}},\nonumber
\end{align}
where $\vol(E)$ denotes the symplectic volume of a subset $E$ of a symplectic
manifold, $\mathrm{B}^{2n}$ is the standard symplectic unit $2n$\--ball, $\sympT$ is 
the category of $2n$\--dimensional symplectic toric manifolds,
and a toric ball packing $P$ of $M$ is given by a disjoint collection
of symplecticly and $\T^n$\--equivariantly embedded balls.
In analogy we define the \emph{semitoric packing capacity}
\[
 \semitoricpack\colon  \sympST \to [0,\infty]
\]
on $\sympST$, the category of semitoric manifolds~\cite{PeVNconstruct2011},
where $P$ in~\eqref{eqn_toricpack} is replaced by a semitoric ball packing of $M$
(Definition \ref{def_stemb}).

\begin{prop}\label{propintro_packingcaps}
 The toric packing capacity $\toricpack\colon\sympT\to[0,\infty]$ is a
 symplectic $\T^n$\--capacity and
 the semitoric packing capacity $\semitoricpack\colon\sympST\to[0,\infty]$
 is a symplectic $(S^1\times\R)$\--capacity.
\end{prop}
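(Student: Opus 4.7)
The plan is to verify, for each of the two packing capacities, the three defining axioms of a symplectic $G$\--capacity: monotonicity under $G$\--equivariant symplectic embeddings, conformality of the form $c(M,\la\om,\phi)=\sqrt{|\la|}\,c(M,\om,\phi)$, and a normalisation on the standard ball. Since the toric and semitoric cases are structurally parallel, I would carry out the argument in detail for $\toricpack$ and then deduce the analogous statement for $\semitoricpack$ by repeating the same three steps with semitoric ball packings in the sense of Definition~\ref{def_stemb}. Monotonicity is the most conceptual step: given an equivariant embedding $f\colon (M_1,\om_1,\phi_1)\immG (M_2,\om_2,\phi_2)$ with $G=\T^n$ and a toric ball packing $P=\{g_\al\colon \mathrm{B}^{2n}(r_\al)\immtn M_1\}$, the collection $\{f\circ g_\al\}$ is again a family of disjoint, symplectically and $\T^n$\--equivariantly embedded balls in $M_2$, because $f$ is injective, symplectic, and intertwines the actions; since $f$ preserves symplectic volume one has $\vol(f(P))=\vol(P)$, and taking the supremum followed by the $(2n)$th root gives $\toricpack(M_1,\om_1,\phi_1)\le\toricpack(M_2,\om_2,\phi_2)$.

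For conformality, fix $\la>0$ and consider the dilation $h\colon \mathrm{B}^{2n}(r\sqrt\la)\to \mathrm{B}^{2n}(r)$ defined by $h(z)=z/\sqrt\la$. A direct computation gives $h^\ast(\la\om_0)=\om_0$, and $h$ is $\T^n$\--equivariant since it commutes with the linear diagonal rotation action. Consequently, replacing each packing map $g_\al$ by $g_\al\circ h$ sets up a bijection between toric ball packings of $(M,\om,\phi)$ by balls of radii $r_\al$ and those of $(M,\la\om,\phi)$ by balls of radii $\sqrt\la\,r_\al$ with the same images in $M$, so the total symplectic volume (measured with $\la\om$) is exactly $\la^n$ times the original total volume. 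After the $(2n)$th root one obtains $\toricpack(M,\la\om,\phi)=\sqrt\la\,\toricpack(M,\om,\phi)$, with the case $\la<0$ handled analogously to yield the required $\sqrt{|\la|}$ scaling.

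For non\--triviality, on the standard ball $(\mathrm{B}^{2n}(r),\om_0)$ with its standard toric action the single\--ball packing by the identity realises $\vol(P)=r^{2n}\vol(\mathrm{B}^{2n})$, while disjointness bounds any packing by $\vol(M)=r^{2n}\vol(\mathrm{B}^{2n})$, so this is the supremum and $\toricpack$ evaluates to $r$, matching the expected normalisation. The semitoric case repeats all three steps with $\T^n$\--equivariance replaced by $(S^1\times\R)$\--equivariance in the sense of Definition~\ref{def_stemb}. The main obstacle I expect is not any of these axiom checks per se but rather the verification, in the semitoric case, that pullback along an equivariant embedding and rescaling of the symplectic form both preserve the class of semitoric ball packings despite the non\--compact $\R$\--factor of the symmetry group; this requires unpacking Definition~\ref{def_stemb} to check that the $\R$\--action near the local elliptic\--elliptic models underlying the packing is intrinsic enough to be compatible with both operations. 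Once that compatibility is granted, the semitoric axioms follow identically.
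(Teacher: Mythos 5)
Your outline captures the toric monotonicity correctly and your rescaling computation is careful, but there are two genuine gaps plus a mismatch with the stated axioms. The non-triviality condition required by Definition~\ref{def_RTcapacity} is not ``a normalisation on the standard ball''; it is tameness by both $\mathrm{B}^{2n}$ and $\mathrm{Z}^{2n}$ in the sense of Definition~\ref{def_tame}. Since $\mathrm{B}^{2n}$ and $\mathrm{Z}^{2n}$ are noncompact, neither lies in $\sympT$ (nor $\sympST$), which is precisely why the paper works with tameness rather than computing the capacity on the ball directly. The substantive half of tameness is the upper bound: if $M\in\sympT$ and $M\immtn\mathrm{Z}^{2n}$, then $\toricpack(M)\leq 1$. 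The paper gets this from Lemma~\ref{lem_packingandradius}, which bounds $\toricpack(M)$ by $\ell^{1/2n}\cBnn(N)$ using that a toric ball packing has at most as many balls as $\T^n$-fixed points of the target and each ball has radius at most $\cBnn$; combined with the facts that $\mathrm{Z}^{2n}$ has exactly one $\T^n$-fixed point and $\cBnn(\mathrm{Z}^{2n})=1$ (from the proof of Proposition~\ref{propintro_equivgromov}), the bound follows. Your disjointness-and-volume argument gives no control over $\toricpack(M)$ for $M\immtn\mathrm{Z}^{2n}$ because $\mathrm{Z}^{2n}$ has infinite volume, so this step is simply missing.

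Second, your semitoric monotonicity is not actually proven. A semitoric embedding (Definition~\ref{def_stemb}) is defined through an induced $\T^2$-action on a dense open set $M^{\vec\ep}$ coming from a straightening map, not directly from the $(S^1\times\R)$-action, so it is not automatic that composing a semitoric embedding with a symplectic $(S^1\times\R)$-embedding yields another semitoric embedding. The paper must do real work here: Lemma~\ref{lem_equivsymplecto} (via Lemma~\ref{lem_hamvectfield}) shows that any symplectic $(S^1\times\R)$-embedding $\rho\colon M_1\immTR M_2$ pulls back $(J_2,H_2)$ to $(eJ_1+c_J,\,aJ_1+bH_1+c_H)$ with $e=\pm1$, $b\neq 0$, from which it follows that $\rho$ sends action-angle coordinates to action-angle coordinates and hence preserves semitoric ball packings. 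You correctly flag this as ``the main obstacle,'' but then assume it rather than proving it, which leaves the crux of the semitoric case open. Finally, a smaller point: you state conformality as $c(M,\la\om,\phi)=\sqrt{|\la|}\,c(M,\om,\phi)$, but Definition~\ref{def_eqvcap} requires $c(M,\la\om,\phi)=|\la|\,c(M,\om,\phi)$; the factor you derive via the rescaling $z\mapsto z/\sqrt{|\la|}$ (which is indeed the one that preserves $\om_0$) does not match the axiom as written, and you should reconcile this normalisation mismatch against the paper's conventions rather than silently replacing the target power.
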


The  continuity of symplectic capacities is
discussed  in~\cite{Ba1995, CiHoLaSc2007, EkHo1989, ZeZi2013}.
The semitoric and toric packing capacities are each defined
on categories of integrable systems which have a natural
topology~\cite{PaSTMetric2015, PePRS2013}, but we can only discuss the continuity of
the $(m,k)$\--equivariant Gromov radius on a subcategory of its domain which has a topology, so
we restrict to the case of $(m,k)=(n,n)$.
The $\T^n$\--action on a symplectic toric manifold may be lifted to
an action of $\R^n$.
Let $\sympplain^{2n,\R^n}_\mathrm{T}$ be the symplectic category
of symplectic toric manifolds each of which is endowed with the
$\R^n$\--action obtained by lifting the given $\T^n$\--action which is a subcategory of 
$\sympplain^{2n,\R^n}$.

\begin{theorem}[Continuity of capacities]\label{thmintro}
 The following hold:
 \begin{enumerate}[label=\textup{(\roman*)}]
  \item \label{thmintro_part_toric}
    The toric packing capacity 
    $\toricpack\colon\sympT\to [0,\infty]$ is everywhere
    discontinuous and the restriction of $\toricpack$
    to the space of symplectic toric $2n$\--dimensional manifolds
    with exactly $N$ fixed points of the
    $\T^n$\--action is continuous for any choice of $N\geq 0$;
  \item \label{thmintro_part_semitoric}
    The semitoric packing capacity $\semitoricpack\colon\sympST\to [0,\infty]$
    is everywhere discontinuous
    and the restriction of $\semitoricpack$ to the space of
    semitoric manifolds with exactly $N$ elliptic-elliptic fixed points of the
    associated $(S^1\times\R)$\--action is continuous for any choice of $N\geq 0$;
  \item \label{thmintro_part_gromov}
    The $(n,n)$\--equivariant Gromov radius restricted to the space of symplectic toric manifolds
    \[\cBnn|_{\sympplain^{2n,\R^n}_\mathrm{T}}\colon\sympplain^{2n,\R^n}_\mathrm{T}\to[0,\infty]\]
    is everywhere discontinuous
    and the restriction of
    $\cBnn|_{\sympplain^{2n,\R^n}_\mathrm{T}}$ to the space of
    symplectic toric $2n$\--dimensional manifolds
    with exactly $N$ fixed points of the
    $\R^n$\--action is continuous for any choice of $N\geq 0$.
 \end{enumerate}

\end{theorem}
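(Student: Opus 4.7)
The plan is to rewrite each capacity as an explicit optimization on the combinatorial/geometric data classifying the underlying manifolds, then to establish continuity within fixed-$N$ strata by a parametric-optimization argument and to exhibit stratum-crossing perturbations for the everywhere-discontinuity statement.

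For part~\ref{thmintro_part_toric}, by Delzant's theorem $\sympT$ is parametrized by Delzant polytopes $\Delta \subset \R^n$ with the topology induced by symmetric difference. Equivariant ball embeddings correspond bijectively to standard Delzant simplices placed at vertices, so $\toricpack(\Delta)$ is, up to a constant power, the supremum of $\sum_v s_v^n$ over nonnegative size vectors $(s_v)$ indexed by the vertices, subject to the linear disjointness constraints encoded by the edges of $\Delta$. When $N$ is held fixed, the vertex positions, edge directions, and resulting feasible region in size-space all vary continuously in $\Delta$, and the continuous supremum of a continuous objective over a continuously varying compact feasible set is continuous; this yields the fixed-$N$ statement. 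For everywhere discontinuity, one exhibits at each $\Delta$ a Delzant corner-chopping sequence $\Delta_j \to \Delta$ in symmetric difference that crosses a stratum boundary---introducing new vertices with very short edges---for which the reorganized optimum on $\Delta_j$ fails to converge to $\toricpack(\Delta)$.

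The analogous strategy handles parts~\ref{thmintro_part_semitoric} and~\ref{thmintro_part_gromov}. For the semitoric case, the Pelayo--Vu Ngoc classification parametrizes $\sympST$ by an ingredient space (a semitoric polygon together with Taylor series invariants at focus-focus points, heights, and twisting indices) with the metric topology of~\cite{PaSTMetric2015}; since $\semitoricpack$ depends only on packings at elliptic-elliptic vertices, which are Delzant-type vertices of the polygon, one imports the argument of part~(i) applied to the polygon component, after verifying that the chosen perturbations lift to valid perturbations of the full ingredient tuple without spuriously altering the other invariants. For the equivariant Gromov radius, the lifted $\R^n$-action forces any equivariant ball embedding into a toric manifold to concentrate near a single torus fixed point, so $\cBnn|_{\sympplain^{2n,\R^n}_\mathrm{T}}(\Delta)$ equals the supremum over vertices of the maximal Delzant simplex fitting at that vertex inside $\Delta$; this is a continuous function of the vertex data at fixed $N$, and simultaneous corner chopping at every sup-attaining vertex produces the discontinuity.

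The main obstacle I anticipate is the everywhere-discontinuity claim: one must find, at \emph{every} element of the moduli space, a convergent perturbation that produces a strict jump in the capacity. Fixed-$N$ continuity is routine via standard parametric-optimization, but the cross-stratum discontinuity is delicate, as certain perturbations allow the packing to reorganize continuously (for example, chopping a vertex that is not used by the optimum) while others do not. The argument must therefore pick the perturbation carefully and carry out a combinatorial case analysis comparing the chopped vertex's original contribution to the reorganized contribution after chopping, taking into account interactions with adjacent disjointness constraints; a secondary issue in part~(ii) is checking compatibility of the polygon perturbation with the remaining semitoric invariants.
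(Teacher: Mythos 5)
Your high-level plan matches the paper's: translate each capacity via Delzant / Pelayo--V\~u Ng\d{o}c classification into an optimization over polytopes or semitoric polygons (this is Theorem~\ref{thm_toricgeomandcombo} and Proposition~\ref{prop_stgeomandcombo}), prove continuity on fixed-$N$ strata, and prove everywhere-discontinuity via corner-chop perturbations. However there are two genuine gaps that stop your argument from closing.

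For fixed-$N$ continuity, you assert that ``vertex positions, edge directions, and resulting feasible region in size-space all vary continuously in $\Delta$.'' This is false as stated for edge directions: they are primitive integer vectors, which form a discrete set, and a priori an $L^1$-nearby Delzant polytope with $N$ vertices could have completely different integer edge data at its vertices. The proof requires a nontrivial discreteness lemma: the set of possible local vertex configurations of a Delzant polytope is discrete in the $d_\P$-topology. The paper establishes this by writing the edge directions at a vertex as the columns of $A\in\mathrm{SL}_n(\Z)$, using the factorization $\det A = |v_1|\cdots|v_n|\det(\eta(A))$ where $\eta$ is the column normalization, and then the Delzant constraint $\det A = 1$ together with $|v_i|\geq 1$ gives $|v_i|\leq (\det\eta(A))^{-1}$, which near a fixed $\Delta$ bounds the integer vectors to a finite set. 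Only after this lemma does one know that nearby $\De'\in\P_\toric^N$ have \emph{identical} edge directions, so that they differ from $\De$ only by parallel face translations, along which $\pack_\toric$ is indeed continuous. In the semitoric case the analogue (Lemma~\ref{lem_smoothanglesdiscrete}, discreteness of smooth angles, together with its hidden-corner variant) plays the same role. Your ``parametric optimization over a continuously varying feasible set'' heuristic does not substitute for this lattice-rigidity argument.

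For the everywhere-discontinuity claim, you correctly identify the danger that a single corner chop may be absorbed by a reorganized packing, but the anticipated ``combinatorial case analysis comparing the chopped vertex's original contribution to the reorganized contribution'' is not what the proof needs. The clean move is to chop \emph{all} $N$ vertices of $\De$ simultaneously by $\varep$: then every vertex of the resulting $\De_\varep$ is a newly-created vertex with an incident edge of length $O(\varep)$, hence every admissible simplex in \emph{any} packing of $\De_\varep$ has volume $O(\varep)$, and since the number of simplices is bounded, $\pack_\toric(\De_\varep)\to 0 < \pack_\toric(\De)$ while $d_\P(\De,\De_\varep)\to 0$. No reorganization analysis is required. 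Note this is different from ``chop at sup-attaining vertices,'' which is enough for the $\max$-type invariant $\cBnn$ in part~\ref{thmintro_part_gromov} but not for the sum-type invariant $\pack_\toric$: chopping only where an optimal packing lives can leave another equally good packing untouched. Finally, for part~\ref{thmintro_part_semitoric} you must also handle semitoric polygons with infinitely many non-fake vertices, where a uniform $\varep$-chop does not converge in $d_\P^\nu$; the paper uses chop sizes decaying fast enough that the total chopped volume is summable (see the proof of Lemma~\ref{lem_stpackdiscont}), a case your proposal does not address.
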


Theorem~\ref{thmintro} generalizes~\cite[Theorem A]{FiPe2014},
which deals with $4$\--manifolds, and solves~\cite[Problem 30]{PePRS2013}.

 In Section~\ref{sec_symplGcap} we give a general notion of symplectic $G$\--capacities 
 and we prove that the $(m,k)$\--equivariant Gromov radius
 is a capacity. In Section~\ref{sec_HamilTRactions} we review facts about Hamiltonian actions 
 and their relation to integrable systems that will be needed in the remainder of the paper.
 Sections~\ref{sec_Tncap} and~\ref{sec_symplSRcap} are devoted to constructing
 nontrivial symplectic $G$\--capacities when $G=\T^k\times\R^{d-k}$, which include the
 toric and semitoric packing capacities. In Sections~\ref{sec_toricpackcont} 
 and~\ref{sec_stmetric} we discuss the continuity of these symplectic $G$\--capacities.

\medskip
   
  \emph{Acknowledgements}. 
  The first author is supported by NSF grants DMS-1262411 and DMS-1361122.
 The  second and third authors are  supported by NSF grants DMS-1055897 and DMS-1518420.

\section{Symplectic \tex{$G$}\--capacities}
\label{sec_symplGcap}

For  $n\geq 1$ and $r>0$ let 
$\mathrm{B}^{2n}(r) \subset \C^n$ be the $2n$\--dimensional open symplectic ball 
of radius $r$ and let 
\[
 \mathrm{Z}^{2n}(r) = \{\,(z_i)_{i=1}^n\in\C^n \mid \abs{z_1} < r\,\}
\]
be the $2n$\--dimensional open symplectic cylinder of radius $r$.
Both inherit a
symplectic structure from their embedding as a subset 
of $\C^n$ with symplectic form  $\om_0 = \frac{\mathrm{i}}{2}\sum_{j=1}^{n}{\rm d}z_j\wedge {\rm d}\bar{z}_j$.
We write $\mathrm{B}^{2n} = \mathrm{B}^{2n}(1)$, 
$\mathrm{Z}^{2n} = \mathrm{Z}^{2n}(1)$, and use $\imm$ to denote a symplectic embedding.

\subsection{Symplectic capacities}

Let $\symp$ be the category of symplectic 
$2n$\--dimensional manifolds with symplectic embeddings as morphisms.
A \emph{symplectic category} is a subcategory
$\mathcal{C}$ of $\symp$ such that $(M,\om)\in\mathcal{C}$ implies 
$(M,\la \om)\in\mathcal{C}$ for all $\la\in\R\setminus\{0\}$.  Let $\mathcal{C}\subset\symp$ be a symplectic category.

The following fundamental notion of symplectic invariant
is due to Ekeland and Hofer.
\begin{definition}[\cite{EkHo1989,Hocap1990}]\label{def_cap}

 A \emph{generalized symplectic capacity} on $\mathcal{C}$ is a map 
 $c\colon \mathcal{C}\to[0,\infty]$ satisfying:
 \begin{enumerate}
  \item \label{part_monocap} \emph{Monotonicity}:  if $(M, \om), (M',\om')\in\mathcal{C}$ and there
   exists a symplectic embedding $M\imm M'$ then $c(M,\om)\leq c(M',\om')$;
  \item \label{part_confcap} \emph{Conformality}: if $\la\in\R\setminus\{0\}$ and $(M,\om)\in\mathcal{C}$
   then $c(M,\la\om) = \abs{\la} c(M,\om)$.
 \end{enumerate}
  If additionally $\mathrm{B}^{2n}, \mathrm{Z}^{2n}\in\mathcal{C}$ and $c$ satisfies:
 \begin{enumerate}[resume]
  \item \emph{Non-triviality}: $0<c(\mathrm{Z}^{2n}, \om_0)<\infty$ 
   and  $0<c(\mathrm{B}^{2n}, \om_0)<\infty$;
 \end{enumerate}
 then $c$ is a \emph{symplectic capacity}. 
\end{definition}

\subsection{Symplectic \tex{$G$}\--capacities} 
\label{sec_subsymplGcap}

Let $G$ be a Lie group and let
$\mathrm{Sympl}(M)$ denote the group of symplectomorphisms of the symplectic manifold $(M,\omega)$.
A smooth $G$\--action  $\phi\colon  G\times M \to M$ is \emph{symplectic} if 
$\phi(g,\cdot)\in\mathrm{Sympl}(M)$ for each $g\in G$.
The triple $(M, \om, \phi)$  is a
\emph{symplectic $G$\--manifold}. 
A \emph{symplectic $G$\--embedding}
$\rho \colon (M_1,\omega_1,\phi_1) \imm (M_2,\omega_2,\phi_2)$
 is a symplectic embedding for which there exists an 
  automorphism $\La\colon G\to G$ of $G$ such that
  $
   \rho(\phi_1(g, p)) = \phi_2(\La(g), \rho(p))
  $
  for all $p\in M_1, \, g\in G$,
  in which case we say that $\rho$ is a symplectic $G$\--embedding
  with respect to $\La$.
  We write $\immG$ to denote a 
symplectic $G$\--embedding. We denote the collection of all 
$2n$\--dimensional symplectic $G$\--manifolds by $\sympG$.
The set $\sympG$ is a category with morphisms given
by symplectic $G$\--embeddings. We call a subcategory $\mathcal{C}_G$
of $\sympG$ a \emph{symplectic $G$\--category} if $(M,\om, \phi)\in\mathcal{C}_G$ implies
$(M,\la \om, \phi)\in\mathcal{C}_G$ for any $\la\in\R\setminus\{0\}$.
Let $\mathcal{C}_G\subset\sympG$ be a symplectic $G$\--category.

\begin{definition}\label{def_eqvcap}
 A \emph{generalized symplectic
 $G$\--capacity} on $\mathcal{C}_G$ is a map $c\colon \mathcal{C}_G\to[0,\infty]$ satisfying:
 \begin{enumerate}
  \item \label{part_mono} \emph{Monotonicity}:  if 
   $(M,\om,\phi),(M',\om',\phi')\in\mathcal{C}_G$ and there exists a 
   symplectic $G$\--embedding $M\immG M'$ then $c(M,\om,\phi)\leq c(M',\om',\phi')$;
  \item \label{part_conf} \emph{Conformality}: if $\la\in\R\setminus\{0\}$ and 
   $(M,\om,\phi)\in\mathcal{C}_G$ then $c(M,\la\om, \phi) = \abs{\la} c(M,\om,\phi)$.
 \end{enumerate}
 \end{definition} 
 
 When the symplectic form and $G$\--action are understood we often write $c(M)$ for $c(M,\om,\phi)$.
 Let $c$ be a generalized symplectic $G$\--capacity on a symplectic $G$\--category $\mathcal{C}_G$.
 
 \begin{definition}
  For $(N,\om_N,\phi_N)\in\mathcal{C}_G$ we say that $c$ satisfies \emph{$N$\--non-triviality}
  or is \emph{non-trivial on $N$} if $0<c(N)<\infty$.
 \end{definition}
  
 \begin{definition}\label{def_tame}
 We say that $c$ is \emph{tamed} by $(N,\om_N,\phi_N)\in\sympG$ if
 there exists
 some $a\in(0,\infty)$ such that the following two properties hold:\\
 (1) if $M\in\mathcal{C}_G$ and there exists a symplectic
   $G$\--embedding $M\immG N$ then $c(M)\leq a$;\\
 (2) if $P\in\mathcal{C}_G$ and there exists a symplectic
   $G$\--embedding $N\immG P$ then $a\leq c(P)$.
 \end{definition}

 The non-triviality condition in Definition~\ref{def_cap}
 requires that $\mathrm{B}^{2n},\mathrm{Z}^{2n}\in\mathcal{C}_G$ and $0<c(\mathrm{B}^{2n})\leq c(\mathrm{Z}^{2n}) < \infty$, and 
 tameness encodes this second condition without necessarily including the first one. 
   If $c$ is a generalized symplectic $G$\--capacity on $\mathcal{C}_G\subset\sympG$ we define
  \begin{align*}
   \sympG_0(c) &= \{\,N\in\sympG \mid \inf\{\,c(P) \mid P\in\mathcal{C}_G, N\immG P\,\}=0\,\},\\
   \sympG_\infty(c) &= \{\,N\in\sympG \mid \sup\{\,c(M) \mid M\in\mathcal{C}_G, M\immG N\,\}=\infty\,\},\\
   \sympG_{\mathrm{tame}}(c) &= \{\,N\in\sympG \mid \textrm{$c$ is tamed by $N$}\,\}.
  \end{align*} 
 A generalized symplectic $G$\--capacity gives rise to a decomposition of $\sympG$.
  
 \begin{prop}
  Let $c$ be a generalized symplectic $G$\--capacity on a symplectic $G$\--category $\mathcal{C}_G$. Then:
  \begin{enumerate}[label = (\alph*), font = \normalfont]
   \item\label{item_ma} $\sympG = \sympG_0(c) \cup \sympG_\infty(c) \cup \sympG_{\mathrm{tame}}(c)$;
   \item\label{item_mb} the union in part~\textup{\ref{item_ma}} is pairwise disjoint;
   \item\label{item_mlemma} $c$ is non-trivial on $N\in\sympG$ if and only if $N\in\mathcal{C}_G\cap\sympG_{\mathrm{tame}}(c)$.
  \end{enumerate}
 \end{prop}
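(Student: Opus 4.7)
The plan is to introduce, for each $N\in\sympG$, the two quantities
\[
 \alpha(N)\coloneqq\inf\{\,c(P)\mid P\in\mathcal{C}_G,\,N\immG P\,\},
 \qquad
 \beta(N)\coloneqq\sup\{\,c(M)\mid M\in\mathcal{C}_G,\,M\immG N\,\},
\]
with the convention that an infimum over an empty set is $+\infty$ and a supremum over an empty set is $0$. Directly from the definitions, $N\in\sympG_0(c)$ iff $\alpha(N)=0$, $N\in\sympG_\infty(c)$ iff $\beta(N)=\infty$, and $N\in\sympG_{\mathrm{tame}}(c)$ iff there exists $a\in(0,\infty)$ with $\beta(N)\leq a\leq\alpha(N)$. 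The crucial preliminary observation is that $\beta(N)\leq\alpha(N)$ for every $N\in\sympG$: if both index sets are non-empty, then for any $M\immG N$ and $N\immG P$ with $M,P\in\mathcal{C}_G$ the composition is a symplectic $G$-embedding $M\immG P$, and monotonicity gives $c(M)\leq c(P)$; taking sup over $M$ and inf over $P$ yields $\beta(N)\leq\alpha(N)$, while the remaining degenerate cases are handled by the empty-set conventions.

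Given this, part~\ref{item_ma} reduces to a trichotomy on the pair $(\alpha(N),\beta(N))$: if $\alpha(N)=0$ then $N\in\sympG_0(c)$; if $\beta(N)=\infty$ then $N\in\sympG_\infty(c)$; otherwise $\alpha(N)>0$ and $\beta(N)<\infty$, and the inequality $\beta(N)\leq\alpha(N)$ permits the choice of some $a\in[\beta(N),\alpha(N)]\cap(0,\infty)$ certifying $N\in\sympG_{\mathrm{tame}}(c)$. Part~\ref{item_mb} follows immediately from the same characterizations: $\sympG_0(c)\cap\sympG_\infty(c)$ would force $\infty=\beta(N)\leq\alpha(N)=0$, a contradiction; $\sympG_0(c)\cap\sympG_{\mathrm{tame}}(c)$ would force $0<a\leq\alpha(N)=0$; and $\sympG_\infty(c)\cap\sympG_{\mathrm{tame}}(c)$ would force $\infty=\beta(N)\leq a<\infty$.

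For part~\ref{item_mlemma}, the key point is that when $N\in\mathcal{C}_G$ the identity is a symplectic $G$-embedding $N\immG N$, so both tameness conditions can be tested against $N$ itself. Explicitly, if $N\in\mathcal{C}_G\cap\sympG_{\mathrm{tame}}(c)$ with witness $a\in(0,\infty)$, then condition~(1) with $M=N$ gives $c(N)\leq a$ and condition~(2) with $P=N$ gives $a\leq c(N)$, hence $0<c(N)=a<\infty$. Conversely, if $N\in\mathcal{C}_G$ with $0<c(N)<\infty$, then $a\coloneqq c(N)$ witnesses tameness directly by monotonicity, so $N\in\sympG_{\mathrm{tame}}(c)$. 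The argument is essentially formal; the only real (minor) obstacle is handling the conventions for supremum and infimum over empty sets cleanly, so that the inequality $\beta(N)\leq\alpha(N)$ — which drives the whole trichotomy — holds uniformly over all $N\in\sympG$, including those $N$ for which no $M\in\mathcal{C}_G$ embeds into $N$ or no $P\in\mathcal{C}_G$ receives $N$.
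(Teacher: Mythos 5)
Your proof is correct and follows essentially the same route as the paper: both introduce the supremum and infimum (the paper's $a_1,a_2$ are your $\beta(N),\alpha(N)$), deduce $\beta(N)\leq\alpha(N)$ from monotonicity plus composition of $G$-embeddings, and then pick a witness $a$ in the resulting interval to certify tameness. You are somewhat more careful than the paper with the empty-index-set conventions (the paper's claim $0<a_1\leq a_2<\infty$ outside $\sympG_0\cup\sympG_\infty$ is slightly imprecise, though the conclusion $[a_1,a_2]\cap(0,\infty)\neq\varnothing$ survives), and you spell out parts (b) and (c), which the paper dismisses as ``a similar argument'' and ``immediate.''
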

 
 \begin{proof}
  In order to prove item~\ref{item_ma} we show that if $N\in\sympG$ is not in 
  $\sympG_0(c) \cup \sympG_\infty(c)$ then it is in 
  $\sympG_{\mathrm{tame}}(c)$.  If
  $M\immG N \immG P$ for some $M,P\in\mathcal{C}_G$ then
  $M\immG P$ so $c(M)\leq c(P)$.  Let
  $
   a_1 = \sup\{\,c(M) \mid M\immG N\,\}
  $
  and
  $
   a_2 = \inf\{\,c(P) \mid N\immG P\,\}.
  $
  Since $N\notin \sympG_0(c) \cup \sympG_\infty(c)$
  we have that $0<a_1\leq a_2 < \infty$.  Pick $a\in[a_1,a_2]$.
  If $M\in\mathcal{C}_G$ and $M\immG N$ then $c(M)\leq a_1 \leq a$
  and if $P\in\mathcal{C}_G$ and $N\immG P$ then $c(P)\geq a_2 \geq a$
  so $N\in\sympG_{\mathrm{tame}}(c)$.  Item~\ref{item_mb} follows from a similar argument
  and~\ref{item_mlemma} is immediate.
 \end{proof}

 In light of item~\ref{item_mlemma} we view $\sympG_{\mathrm{tame}}(c)$ as an
 extension of the set of elements of $\sympG$ on which $c$ is non-trivial
 to include those elements outside of the domain of $c$.

\subsection{Symplectic \tex{$(\T^k\times\R^{d-k})$}\--capacities}
\label{sec_symplTRactions}

For $1\leq d\leq n $ the standard action of $\T^d$ on $\C^n$ is given by 
 \[
 \phi_{\C^n}\big((\al_i)_{i=1}^d, (z_i)_{i=1}^n\big) = (\al_1 z_1, \ldots, \al_d z_d, z_{d+1}, \ldots, z_n).
 \]
This action induces actions of $\T^d=\T^k\times\T^{d-k}$
on $\mathrm{B}^{2n}$ and $\mathrm{Z}^{2n}$, which in turn induce
the standard actions of
$\T^k\times\R^{d-k}$ on $\mathrm{B}^{2n}$ and $\mathrm{Z}^{2n}$
for  $k\leq d$. The action of an
element of $\T^k\times\R^{d-k}$ is the
action of its image under the quotient map
$\T^k\times\R^{d-k} \to \T^d$.
In the following we endow $\mathrm{B}^{2n}$ and $\mathrm{Z}^{2n}$
with the standard actions.

 \begin{definition}\label{def_RTcapacity}
 A generalized symplectic $(\T^k\times\R^{d-k})$\--capacity is a \emph{symplectic 
 $(\T^k\times\R^{d-k})$\--capacity} if it is
 tamed by $\mathrm{B}^{2n}$ and $\mathrm{Z}^{2n}$.
 \end{definition}

\subsection{A first example}\label{sec_gromov}

 The \emph{Gromov radius} $\cgromov \colon  \symp \to (0,\infty]$ is given by
 $$\cgromov(M):=\sup\{\,r>0 \mid \mathrm{B}^{2n}(r) \imm M\,\}.$$ 
 Fix  $0\leq k \leq m \leq n$ and let $\cBmk$ be as in Equation \eqref{eqn_eqvgromov}.  
 If $k=0$ and $m=n$ then $\cgromov = \cBmk$. 

\begin{proof}[Proof of Proposition~\ref{propintro_equivgromov}]
  Parts \eqref{part_mono} and \eqref{part_conf} of Definition~\ref{def_eqvcap}
  are immediate. By the standard inclusion map $\cBmk (\mathrm{B}^{2n})\geq 1$ so we only must show 
  that $\cBmk(\mathrm{Z}^{2n})\leq 1$.
  Suppose that for $r>1$ $\rho\colon  \mathrm{B}^{2m}(r) \immrk \mathrm{Z}^{2n}$ is a 
  symplectic $\R^k$\--embedding with respect to some $\La\in\mathrm{Aut}(\R^k)$.
  Let 
  \[(\eta_1, \ldots, \eta_k)=\La^{-1}(1, 0, \ldots, 0).\]
  Since $\La$ is an automorphism $\eta_{j_0} \neq 0$ for some
  $j_0\in\{1, \ldots, k\}$.
  Pick \[w=(0, \ldots, 0, w_{j_0}, 0, \ldots, 0)\in \mathrm{B}^{2m}(r)\] with entries all zero
  except in the $j_{0}^{\mathrm{th}}$ position and such that $\abs{w_{j_0}} > 1$.
  Let $u = (u_1, \ldots, u_n) = \rho(w)$ and note $\abs{u_1}<1$.
  Let $\io\colon\R\imm\R^k$ be given by $\io(x) = (x, 0, \ldots, 0)$.
  Let $\phi_\mathrm{B}\colon \R^k\times \mathrm{B}^{2m}(r)\to \mathrm{B}^{2m}(r)$ and 
  $\phi_\mathrm{Z}\colon \R^k\times \mathrm{Z}^{2n}\to \mathrm{Z}^{2n}$ be the standard actions of $\R^k$.
  Then for $x\in\R$
  \[
   \rho\big(\phi_\mathrm{B}(\La^{-1}\circ\io (x), w)\big) = \phi_\mathrm{Z}(\io(x),\rho(w)) = \phi_\mathrm{Z}(\io(x), u).
  \]
  Thus 
  \begin{equation}\label{eqn_gromovproof}
   \rho\big(\{\, (0,\ldots, \mathrm{e}^{2\mathrm{i}x\eta_{j_0}}w_{j_0}, 0, \ldots,0) \mid x\in \R \,\}\big) = \{\, (\mathrm{e}^{2\mathrm{i}x}u_1, u_2, \ldots, u_n)\mid x\in\R \,\}
  \end{equation}
  and since $\rho$ is injective and $\eta_{j_0}\neq0$ this means that $u_1\neq0$.
  Let \[ S_\mathrm{B} = \{\, (0, \ldots, 0, \al, 0, \ldots, 0)\in \mathrm{B}^{2m}(r) \mid \, \abs{\al} < \abs{w_{j_0}}\,\}\] 
  where $\al$ is in the $j_0^{\mathrm{th}}$ position and
  \[ S_\mathrm{Z} = \{\, (\be, u_2, \ldots, u_n)\in \mathrm{Z}^{2n} \mid\, \abs{\be} < \abs{u_1}\,\}.\] 
  Equation~\eqref{eqn_gromovproof} implies that $\rho(\partial S_\mathrm{B} ) = \partial S_\mathrm{Z}$
  and since $\rho$ is an embedding this means $\partial (\rho(S_\mathrm{B}))=\partial S_\mathrm{Z}$.
  Since $\rho (S_\mathrm{B})$ and
  $S_\mathrm{Z}$ have the same boundary, $\om_\mathrm{Z}$ is closed, and $\mathrm{Z}^{2n}$ has 
  trivial second homotopy group, 
  $$\varint_{\rho (S_\mathrm{B})} \om_\mathrm{Z} = \varint_{S_\mathrm{Z}} \om_\mathrm{Z}.$$
  Finally, integrating over $z$ we have
  \[
   \frac{\mathrm{i}}{2}\varint_{\abs{z}<\abs{w_j}} \mathrm{d}z\wedge \mathrm{d}\bar{z} = \varint_{S_\mathrm{B}} w_\mathrm{B} 
                                                =  \varint_{S_\mathrm{B}} \rho^* \om_\mathrm{Z}
                                                = \varint_{\rho(S_\mathrm{B})} \om_\mathrm{Z}
                                                =  \varint_{S_\mathrm{Z}} \om_\mathrm{Z}
                                                = \frac{\mathrm{i}}{2} \varint_{\abs{z}<\abs{u_1}} \mathrm{d}z\wedge \mathrm{d}\bar{z}.
  \]
  This implies that $1 < \abs{w_j} = \abs{u_1} < 1$, which is a contradiction.
 
 \end{proof}
 
It follows from the proof that 
$\cBmk(\mathrm{B}^{2n}) = \cBmk(\mathrm{Z}^{2n}) = 1$.

\begin{prop}
Let $M=(S^2)^n$ with symplectic form
$\om_M = \frac{1}{2}\sum_{i=1}^n\mathrm{d}h_i\wedge\mathrm{d}\te_i$
where $h_i\in[-1,1]$, $\te_i\in[0,2\pi)$, $i=1,\ldots,n$, are the
standard height and angle coordinates.
Let $\R^k$, $1\leq k \leq n$,  act on $M$ by rotating the first $k$ components.
Then 
\[\cBmk(M)=\sqrt{2}\]
for all $m,k\in\Z$ with
$1\leq k \leq m \leq n$.
\end{prop}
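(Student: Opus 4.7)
The plan is to establish both inequalities $\cBmk(M)\ge\sqrt{2}$ and $\cBmk(M)\le\sqrt{2}$ separately. For the lower bound I would construct an explicit $\R^k$\--equivariant symplectic embedding of $\mathrm{B}^{2m}(\sqrt{2})$ into $M$ using the symplectomorphism $\mathrm{B}^2(\sqrt{2})\to S^2\setminus\{\mathrm{pt}\}$ given in polar coordinates by $re^{\mathrm{i}\theta}\mapsto(h,\theta)=(r^2-1,\theta)$; this map pulls back $\tfrac{1}{2}\,\mathrm{d}h\wedge\mathrm{d}\theta$ to $r\,\mathrm{d}r\wedge\mathrm{d}\theta$ and intertwines rotation of $\mathrm{B}^2(\sqrt{2})$ with rotation of $S^2$. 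Since $\mathrm{B}^{2m}(\sqrt{2})$ sits inside the polydisk $\{|z_i|<\sqrt{2}\}^m$, applying this symplectomorphism in each of the first $m$ coordinates and sending the remaining $n-m$ factors of $M$ to chosen fixed points of the rotation gives the desired embedding, with $\La=\mathrm{id}$.

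For the upper bound, suppose $\rho\colon\mathrm{B}^{2m}(r)\immrk M$ is a symplectic $\R^k$\--embedding with respect to some $\La\in\mathrm{Aut}(\R^k)$. The first step is to show that $\La$ is necessarily given by an integer matrix of determinant $\pm 1$. At any $p\in\mathrm{B}^{2m}(r)$ whose first $k$ complex coordinates are all nonzero, the $\R^k$\--stabilizer of $p$ is exactly $\pi\Z^k$, so by equivariance $\La(\pi\Z^k)$ coincides with the stabilizer of $\rho(p)$ in $M$. If the $i$\--th component of $\rho(p)$ were a pole of $S^2$ for some $i\le k$, its stabilizer would contain a continuous one\--parameter subgroup of $\R^k$ and could not equal the discrete lattice $\La(\pi\Z^k)$; hence the stabilizer of $\rho(p)$ is again $\pi\Z^k$, forcing $\La(\pi\Z^k)=\pi\Z^k$.

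The second step is a moment map computation. Let $\mu_{\mathrm{B}}(z)=(|z_1|^2,\ldots,|z_k|^2)$ and $\mu_M(p)=(h(p_1),\ldots,h(p_k))$ denote the respective $\T^k$\--moment maps. Infinitesimal equivariance yields the affine relation $\mu_M\circ\rho=\La^{-T}\mu_{\mathrm{B}}+c'$ for some $c'\in\R^k$. The closure of $\mu_{\mathrm{B}}(\mathrm{B}^{2m}(r))$ is the simplex with vertices $0$ and $r^2 e_i$, so the closure of $\mu_M\circ\rho(\mathrm{B}^{2m}(r))$ is an affine simplex with vertices $c'$ and $c'+r^2 v_i$, where each $v_i\in\Z^k\setminus\{0\}$ is the $i$\--th column of $\La^{-T}\in\mathrm{GL}_k(\Z)$. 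This simplex must lie in $\mu_M(M)=[-1,1]^k$, and since each $v_i$ has some entry of absolute value $\ge 1$, comparing the $j$\--th coordinates of $c'$ and $c'+r^2 v_i$ gives $r^2\le 2$.

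The main obstacle is the first step: the integrality rigidity $\La\in\mathrm{GL}_k(\Z)$ requires careful stabilizer analysis, in particular ruling out the degenerate possibility that the image of $\rho$ lies entirely in a coordinate sub\--product where some circle factor acts trivially. Once this is in place the moment map step is a routine polytope estimate.
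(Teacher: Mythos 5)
Your lower bound construction is essentially identical to the paper's, which builds the embedding $\rho(r_1 e^{\mathrm{i}\te_1},\ldots,r_ne^{\mathrm{i}\te_n}) = (\te_1, r_1^2-1, \ldots, \te_n, r_n^2-1)$; the paper only writes the case $(m,k)=(n,n)$ and then uses the monotonicity $\cBnn\le\cBmk$, but this is a cosmetic difference.

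Your upper bound is a genuinely different argument, and it is correct. The paper does \emph{not} pass through moment maps: it fixes the disk $A_R = \{(z,0,\ldots,0): |z|<R\}$, uses the stabilizer dimension to show $\rho(A_R)$ lands inside a single slice $B_{j_0}$ where all but the $j_0$\--th sphere factor is frozen at a pole, shows that $\int_{\rho(A_R)}\mathrm{d}h_i\wedge\mathrm{d}\te_i = 0$ for $i\ne j_0$, and then bounds $\pi R^2 = \int_{\rho(A_R)}\om_M$ by $\tfrac12\int_{S^2}\mathrm{d}h\wedge\mathrm{d}\te = 2\pi$. Your argument instead first proves the rigidity statement $\La(\pi\Z^k)=\pi\Z^k$, i.e.\ $\La\in\mathrm{GL}_k(\Z)$, by matching stabilizers of a generic point, and then runs a clean momentum\--polytope comparison: the image $\La^{-T}\bar\mu_{\mathrm{B}}(\mathrm{B}^{2m}(r))+c'$, a simplex with edge vectors $r^2 v_i$ where $v_i\in\Z^k\setminus\{0\}$, must fit inside $\mu_M(M)=[-1,1]^k$, forcing $r^2\le 2$. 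Both proofs are valid. Your approach is more structural and sits more naturally with the toric/polytope machinery used later in the paper (Sections~\ref{sec_Tncap}--\ref{sec_toricpackcont}), and the integrality of $\La$ is a nice fact that the paper's proof never makes explicit. The paper's approach is more elementary in that it uses only closedness of $\om$ and Stokes' theorem (it does not invoke the momentum map or any convexity), and it parallels the proof of Proposition~\ref{propintro_equivgromov}, which is presumably why the authors chose it. Two small presentational points you may wish to tighten: you should say explicitly that $\overline{\mu_{\mathrm{B}}(\mathrm{B}^{2m}(r))}$ is the closed simplex $\{x_i\ge 0,\ \sum_i x_i\le r^2\}$ \emph{because} any $(a_1,\ldots,a_k)$ in the open simplex is realized by $z=(\sqrt{a_1},\ldots,\sqrt{a_k},0,\ldots,0)$ thanks to $k\le m$; and when deducing $\La(\pi\Z^k)=\pi\Z^k$ you should note that $\La(\pi\Z^k)$ has full rank $k$ (since $\La$ is a linear isomorphism), which is what rules out it being contained in any non\--discrete stabilizer.
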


\begin{proof}
The map $\rho\colon \mathrm{B}^{2n}(\sqrt{2})\immrn M$ given by
\[
 \rho(r_1 e^{\mathrm{i}\te_1},\ldots, r_n e^{\mathrm{i}\te_n})= (\te_1, r_1^2-1, \ldots , \te_n, r_n^2-1 )
\]
is a symplectic $\R^n$\--embedding, so
$\cB{n}{n}(M)\geq \sqrt{2}$. 

Fix $k,m,n\in\Z$ satisfying $0< k \leq m \leq n$ and let 
$\rho:\mathrm{B}^{2m}(r) \immrk M$ be a symplectic $\R^k$\--embedding
for some $r>0$.  
Let 
\[
 B_j = \{\, (h_i, \te_i)_{i=1}^n\in M \mid h_i \in\{\pm1\} \textrm{ if $i\leq k$ and $i\neq j$}\,\}
\]
for $j=1, \ldots, k$.  For $R\in(0,r)$ let 
\[
 A_R = \{\, (z, 0, \ldots, 0)\in \mathrm{B}^{2m}(r) \mid\, \abs{z}<R \,\}.
\]
Every point in $A_R$, except at the identity, has the same $(k-1)$\--dimensional 
stabilizer in $\R^k$ so there exists $j_0\leq k$ such that
$\rho (A_R)\subset B_{j_0}$ for all $R\in(0,r)$. 
Write $\rho = (H_i,\Theta_i)_{i=1}^n$ and consider coordinates $(r,\te)$ on $A_R$ given
by $(r\mathrm{e}^{\mathrm{i}\te}, 0, \ldots, 0)\to(r,\te)$.
For $i\neq j_0$ this means that $H_i$ is constant if $i\leq k$ and 
the $\R^n$\--equivariance of $\rho$ implies that $H_i$ and $\Theta_i$ are
independent of $\te$ if $i>k$.
Thus if $i\in\{1, \ldots, n\}$ and $i\neq j_0$ then
\[
 \varint_{\rho(A_R)} \mathrm{d}h_i\wedge \mathrm{d}\te_i = \varint_{A_R} \mathrm{d}H_i\wedge\mathrm{d}\Theta_i = 0
\]
for $R\in (0,r)$.
Therefore,
\[
  \pi R^2 = \smashoperator{\varint_{A_R}} \om_\mathrm{B} = \smashoperator{\varint_{\rho(A_R)}}\om_M = \frac{1}{2}\varint_{\rho(A_R)} \mathrm{d}h_{j_0}\wedge\mathrm{d}\te_{j_0} + \frac{1}{2}\sum_{i\neq j_0} \left(\varint_{\rho(A_R)}\mathrm{d}h_i\wedge\mathrm{d}\te_i\right)\leq \frac{1}{2}\varint_{S^2}\mathrm{d}h\wedge\mathrm{d}\te = 2\pi
\]
for any $R\in(0,r)$.
This implies that $r\leq\sqrt{2}$ so 
\[
 \sqrt{2} \leq \cBnn(M) \leq \cBmk(M) \leq \sqrt{2}
\]
for any $k,m,n\in\Z$ satisfying $0<k\leq m\leq n$.
\end{proof}

\begin{figure}[ht]
 \centering
  \includegraphics[height=120pt]{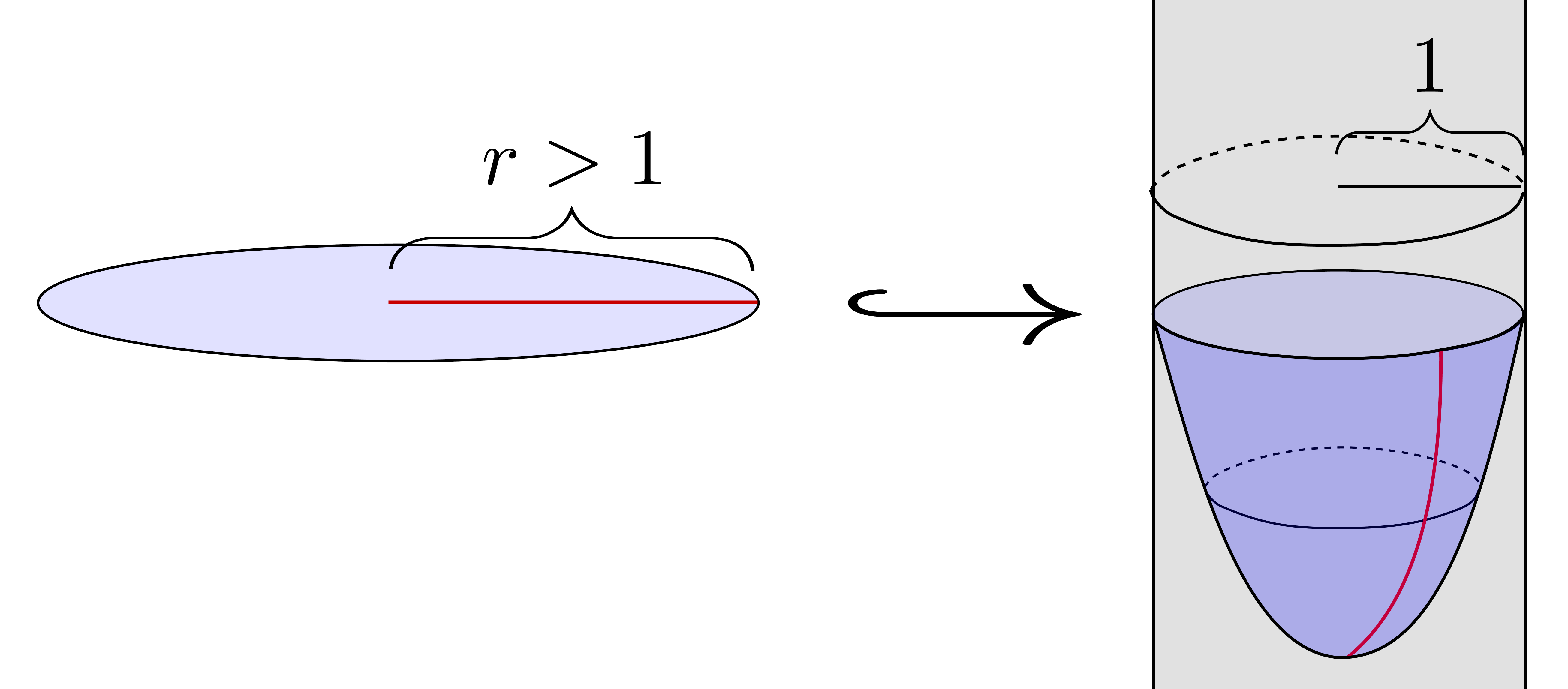}
  \caption{A symplectic $\R$\--embedding.}
  \label{fig_embdisk}
 \end{figure}

\begin{example}\label{ex_z}
For $k, n\in\Z_{>0}$ with $k<n$ let $M=\mathrm{Z}^{2n}$ with
the standard symplectic form.
There are two natural ways in which $\R^k$
can act symplectically on $M$ given by
\[
 \phi_1((t_i)_{i=1}^k, (z_i)_{i=1}^n) = (\mathrm{e}^{2\mathrm{i}t_1} z_1, \mathrm{e}^{2\mathrm{i}t_2} z_2, \ldots, \mathrm{e}^{2\mathrm{i}t_k} z_{k}, z_{k+1}, \ldots, z_{n})
\]
and 
\[
 \phi_2((t)_{i=1}^k, (z_i)_{i=1}^n) = (z_1, \mathrm{e}^{2\mathrm{i}t_1} z_2, \ldots, \mathrm{e}^{2\mathrm{i}t_k} z_{k+1}, z_{k+2}, \ldots, z_{n})
\]
where $\phi_i\colon  \R^k\times M\to M$ for $i=1,2$.
Let $\rho:M\to M$ be given by
\[
 \rho((z_i)_{i=1}^n) = \left(\frac{z_{k+1}}{1+\abs{z_{k+1}}}, \frac{z_1}{1-\abs{z_1}}, z_2, \ldots, z_{k}, z_{k+2}, \ldots, z_{n}\right)
\]
similar to the map shown in Figure \ref{fig_embdisk}. The map $\rho$ is well-defined because $\abs{z_1}<1$ and it
is an $\R^k$\--equivariant diffeomorphism because
\begin{align*}
 \rho\big(\phi_1((t_i)_{i=1}^k, (z_i)_{i=1}^n)\big)
  &=\left(\frac{z_{k+1}}{1+\abs{z_{k+1}}}, \mathrm{e}^{2\mathrm{i}t_1}\frac{z_1}{1-\abs{z_1}}, \mathrm{e}^{2\mathrm{i}t_2} z_2, \ldots, \mathrm{e}^{2\mathrm{i}t_k} z_{k}, z_{k+2}, \ldots, z_{n}\right)\\
  &=\phi_2\left((t)_{i=1}^k, \rho((z_i)_{i=1}^n)\right)
\end{align*}
for all $t_1, \ldots, t_k\in\R$.
Thus the symplectic $\R^k$\--manifolds
$(M,\om,\phi_1)$ and $(M,\om,\phi_2)$ are symplectomorphic via
the identity map and
$\R^k$\--equivariantly diffeomorphic via $\rho$ but they are
not $\R^k$\--equivariantly symplectomorphic
because $\cBoo(M, \om, \phi_1) = 1$ and $\cBoo(M, \om, \phi_2)=\infty$.
\end{example}

\section{Hamiltonian \tex{$(\T^k \times \R^{n-k})$}\--actions}\label{sec_HamilTRactions}

 In this section we review the facts we need for the remainder of the paper 
 about Hamiltonian  $(\T^k \times \R^{n-k})$\--actions and their relation to toric and semitoric
 systems.
 Let $(M,\om)$ be a symplectic manifold and $G$ a Lie group with Lie algebra $\mathrm{Lie}(G)$
 and dual Lie algebra $\mathrm{Lie}(G)^*$.
 A symplectic $G$\--action is \emph{Hamiltonian} if there exists a map $\mu\colon M\to\mathrm{Lie}(G)^*$,
 known as the \emph{momentum map}, such that 
 \[-\mathrm{d}\langle \mu, \mathcal{X} \rangle = \om (\mathcal{X}_M, \cdot)\]
 for all $\mathcal{X}\in\mathrm{Lie}(G)$ where $\mathcal{X}_M$ denotes the vector field on $M$ generated by $\mathcal{X}$ via
 the action of $G$.
 A \emph{Hamiltonian $G$\--manifold} is a quadruple $(M,\om,\phi,\mu)$ where $(M,\om,\phi)$ is a symplectic
 $G$\--manifold for which the action of $G$ is Hamiltonian with momentum map $\mu$.
 Let $\hamG$ denote the category of $2n$\--dimensional Hamiltonian $G$\--manifolds with morphisms given
 by symplectic $G$\--embeddings which intertwine the momentum maps.
 Given $f\colon M\to\R$ the associated Hamiltonian vector field
 is the vector field $\mathcal{X}_f$ on $M$ satisfying $\om(\mathcal{X}_f,\cdot) = -\mathrm{d}f$.
 \begin{definition}\label{def_intsystem}
  An \emph{integrable system} is a triple $(M,\om, F)$ where $(M, \om)$
  is a $2n$\--dimensional symplectic manifold and 
  $F=(f_1, \ldots, f_n)\colon  M \to \R^n$ is a smooth map such that 
  $f_1, \ldots, f_n$ pairwise  Poisson commute,
  i.e. $\om(\mathcal{X}_{f_i},\mathcal{X}_{f_j})=0$ for all $i,j=1, \ldots, n$,
  and the Hamiltonian vector fields
  $(\mathcal{X}_{f_1})_p, \ldots, (\mathcal{X}_{f_n})_p$ 
  are linearly independent for almost all $p\in M$.
 \end{definition}
 Let $\mathcal{I}^{2n}$
 denote the set of all $2n$\--dimensional integrable systems
 and define an equivalence relation $\sim_\mathcal{I}$ on this space
 by declaring $(M,\om, F)$ and $(M',\om',F')$ to be equivalent
 if there exists a symplectomorphism $\phi\colon M\to M'$ such that 
 $F-\phi^* F'\colon M\to\R^n$ is constant.
 
 \subsection{Hamiltonian \tex{$\R^n$}\--actions and integrable systems}
 \label{sec_RnandIS}
 
 Let $(M,\om,F=(f_1, \ldots, f_n))$ be an integrable system and
 for $i=1, \ldots, n$ let $\psi^t_i\colon M\to M$ denote the flow
 along $\mathcal{X}_{f_i}$. The \emph{Hamiltonian flow
 action} $\phi_F\colon \R^n\times M\to M$, given by
 $\phi_F((t_1, \ldots, t_n), p) = \psi_1^{t_1}\circ \ldots \circ \psi_n^{t_n}(p)$,
 defines a Hamiltonian $\R^n$\--action on $M$.
 The action of $G$ on $M$ is \emph{almost
 everywhere locally free} if the stabilizer of $p$
 is discrete for almost all $p\in M$. 
 Let ${\mathcal{F}}\sympplain^{2n, \R^n}$ be the space of
 $\R^n$\--manifolds on which the action of $\R^n$ is 
 Hamiltonian and almost everywhere locally free
 and let ${\sim}_{\R^n}$ denote equivalence
 by $\R^n$\--equivariant symplectomorphisms.

  \begin{lemma}\label{lem_grpact_technical}
  Let $\mathcal{X}_1, \ldots, \mathcal{X}_n$ be vector fields with commuting flows on
  an $m$\--manifold $M$, with $n\leq m$.
  Let $\R^n$ act on $M$ by 
  $
   \phi((t_1, \ldots, t_n), p) = \psi^{t_1}_1 \circ \ldots \circ \psi^{t_n}_n (p)
  $
  where $\psi^t_i$ is the flow of $\mathcal{X}_i$.
  Then, for $p\in M$, the vectors $(\mathcal{X}_1)_p,\ldots,(\mathcal{X}_n)_p\in T_p M$
  are linearly independent if and only if the stabilizer of $p$
  under the action $\phi$ is discrete. 
 \end{lemma}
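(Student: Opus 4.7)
The plan is to analyze the orbit map $\mathrm{orb}_p\colon\R^n\to M$ defined by $\mathrm{orb}_p(t)=\phi(t,p)$, whose differential at the origin carries exactly the data we need. Because the flows commute, $\mathrm{orb}_p(t_i e_i)=\psi_i^{t_i}(p)$, and a straightforward computation gives $\mathrm{d}(\mathrm{orb}_p)_0(e_i)=(\mathcal{X}_i)_p$. The equivalence in the lemma will then follow from standard facts about the orbit map together with translation invariance of the stabilizer.

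For the forward direction, suppose $(\mathcal{X}_1)_p,\ldots,(\mathcal{X}_n)_p$ are linearly independent. Then $\mathrm{d}(\mathrm{orb}_p)_0$ is injective, so by the local immersion theorem there is an open neighborhood $U$ of $0\in\R^n$ such that $\mathrm{orb}_p|_U$ is injective. Hence the stabilizer $\mathrm{Stab}(p)=\mathrm{orb}_p^{-1}(p)$ intersects $U$ only at $0$. Since $\R^n$ is abelian and $\phi$ is a group action, $\mathrm{Stab}(p)$ is a subgroup of $\R^n$, so translation by any $s\in\mathrm{Stab}(p)$ sends $\mathrm{Stab}(p)$ to itself; this promotes the isolation of $0$ to the isolation of every point, giving discreteness.

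For the converse, suppose the vectors are linearly dependent and choose $v=(v_1,\ldots,v_n)\neq 0$ with $\sum_i v_i(\mathcal{X}_i)_p=0$. The key observation is that, because the flows $\psi_i^t$ commute, the vector field $V=\sum_i v_i\mathcal{X}_i$ has flow $\psi_V^t=\psi_1^{tv_1}\circ\cdots\circ\psi_n^{tv_n}$, so $\phi(tv,p)=\psi_V^t(p)$ for all $t\in\R$. Since $V_p=0$, the point $p$ is a fixed point of $\psi_V^t$, and therefore the whole line $\R v$ lies in $\mathrm{Stab}(p)$. As $v\neq 0$, the stabilizer contains a one-parameter subgroup of $\R^n$ and is not discrete.

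The main subtlety is just verifying the flow identity $\psi_V^t=\psi_1^{tv_1}\circ\cdots\circ\psi_n^{tv_n}$, which is a standard consequence of the commutativity hypothesis: the right-hand side is a one-parameter group whose velocity at $t=0$ equals $V$, so it coincides with $\psi_V^t$ by uniqueness of flows. The rest reduces to the local immersion theorem and the observation that subgroups of $\R^n$ are either discrete or contain a line, both of which are routine.
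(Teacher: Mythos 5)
Your proof is correct and takes essentially the same approach as the paper: use linear independence to get local injectivity of the orbit map (the paper obtains this from the canonical chart for commuting vector fields, you from the local immersion theorem applied to $\mathrm{orb}_p$), and use linear dependence to produce a line in the stabilizer. The one place you add genuine value is the converse: the paper simply asserts that $(\alpha t_1,\ldots,\alpha t_n)$ fixes $p$ for all $\alpha$, whereas you spell out the justification via the flow identity $\psi_V^t=\psi_1^{tv_1}\circ\cdots\circ\psi_n^{tv_n}$ for $V=\sum_i v_i\mathcal{X}_i$, which is exactly the step that uses commutativity and needs to be said.
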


 \begin{proof}
  If $(\mathcal{X}_1)_p, \ldots, (\mathcal{X}_n)_p$ are linearly independent then, 
  since they have commuting flows, there is a chart $(U,g)$,
  with $U\subset M$ and $g:U\to\R^m$, such that $g^{-1}\colon g(U)\to U$ satisfies
  \[
   g^{-1}(t_1, \ldots, t_n, 0, \ldots, 0) = \phi((t_1, \ldots, t_n), p)
  \]
  for any $(t_1, \ldots, t_n, 0, \ldots, 0)\in g(U)$.
  Thus $g(U)$
  is an open neighborhood of the identity in $\R^n$ and 
  there exists no non-zero point in $g(U)$
  which fixes $p$, so the stabilizer of $p$ under the action of
  $\R^n$ is discrete.
  On the other hand, if $(\mathcal{X}_1)_p,\ldots, (\mathcal{X}_n)_p$ are linearly dependent, 
  there exist $t_1, \ldots, t_n\in\R$ not all zero
  such that $\sum_{i=1}^n t_i (\mathcal{X}_i)_p = 0$. Thus 
  $(\al t_1, \ldots, \al t_n) \in \R^n$ fixes $p$
  for all $\al\in\R$ and so the stabilizer of $p$ is not discrete.
 \end{proof}

 \begin{prop}\label{prop_groupactions}
  Let $\psi$ be
  the map which takes an integrable system on $M$ to 
  $M$ equipped with its Hamiltonian flow action.
  Then 
  \[
   \psi\colon \mathcal{I}^{2n}/{{\sim}_\mathcal{I}} \to {\mathcal{F}}\sympplain^{2n, \R^n}/{{\sim}_{\R^n}}
  \]
  is a bijection.
 \end{prop}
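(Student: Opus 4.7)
The plan is to verify that $\psi$ is well-defined, that its image lies in $\mathcal{F}\sympplain^{2n,\R^n}/{\sim}_{\R^n}$, and then to construct an inverse out of the momentum map. First I would check well-definedness: if $(M,\om,F)\sim_\mathcal{I}(M',\om',F')$ via a symplectomorphism $\phi$ with $F-\phi^*F'$ constant, then adding a constant does not alter the Hamiltonian vector field, so $\phi_*\mathcal{X}_{f_i}=\mathcal{X}_{f_i'}$ for each $i$. Since $\phi$ is a symplectomorphism intertwining all the generating vector fields, it intertwines their flows and hence the two Hamiltonian flow actions, producing the desired $\R^n$\--equivariant symplectomorphism. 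To see that $\psi$ genuinely lands in ${\mathcal{F}}\sympplain^{2n,\R^n}$, note that $F$ itself is a momentum map for $\phi_F$ (the components pairwise Poisson commute because $\R^n$ is abelian), so $\phi_F$ is Hamiltonian; and the almost-everywhere linear independence of $(\mathcal{X}_{f_1})_p,\ldots,(\mathcal{X}_{f_n})_p$ built into Definition~\ref{def_intsystem} translates, via Lemma~\ref{lem_grpact_technical}, into the almost-everywhere local freeness of $\phi_F$.

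For surjectivity, given $(M,\om,\phi)\in{\mathcal{F}}\sympplain^{2n,\R^n}$ I would choose a momentum map $\mu\colon M\to \mathrm{Lie}(\R^n)^*\cong\R^n$ and set $F=\mu$. Because $\R^n$ is abelian, the components of $\mu$ pairwise Poisson commute, and the Hamiltonian vector fields $\mathcal{X}_{\mu_i}$ coincide with the infinitesimal generators of $\phi$ along the standard basis of $\R^n$; in particular the flow action of $F$ recovers $\phi$. Almost-everywhere local freeness of $\phi$ together with Lemma~\ref{lem_grpact_technical} then yields the almost-everywhere linear independence of these vector fields, so $(M,\om,F)$ is indeed an integrable system and $\psi$ sends it to $[(M,\om,\phi)]$. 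This construction is the candidate inverse; the only ambiguity is the additive constant inherent in the choice of $\mu$ (since $\R^n$ is connected), which matches $\sim_\mathcal{I}$ exactly.

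For injectivity, suppose $\psi([M,\om,F])=\psi([M',\om',F'])$, so there exists an $\R^n$\--equivariant symplectomorphism $\rho\colon M\to M'$. Equivariance forces $\rho_*\mathcal{X}_{f_i}=\mathcal{X}_{f_i'}$ for each $i$, i.e.\ $\mathcal{X}_{f_i-\rho^*f_i'}\equiv 0$, so each $f_i-\rho^*f_i'$ is locally constant and (passing to connected components, or assuming $M$ connected) constant; hence $F-\rho^*F'$ is constant and $(M,\om,F)\sim_\mathcal{I}(M',\om',F')$. The step I expect to require the most bookkeeping is the treatment of the automorphism $\La\in\mathrm{Aut}(\R^n)$ that is allowed in the general definition of symplectic $G$\--embedding: for the bijection to hold one must take $\sim_{\R^n}$ to mean strict $\R^n$\--equivariance ($\La=\mathrm{id}$), or equivalently show that any non-identity $\La$ can be absorbed into a relabeling of $F'$ that still keeps us within the equivalence class under $\sim_\mathcal{I}$. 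Once this convention is fixed, the chain of implications above assembles into the bijection.
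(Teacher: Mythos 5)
Your proof is correct and follows the same core approach as the paper's: use Lemma~\ref{lem_grpact_technical} to translate between almost-everywhere linear independence of Hamiltonian vector fields and almost-everywhere local freeness, and invert $\psi$ by applying the momentum map of the Hamiltonian $\R^n$\--action. However, you are more thorough than the paper. The paper's proof only verifies that $\psi$ lands in $\mathcal{F}\sympplain^{2n,\R^n}$ and constructs a two-sided candidate inverse from the momentum map, silently leaving the reader to check compatibility with the two equivalence relations; you explicitly verify well-definedness (adding a constant to $F$ does not change $\mathcal{X}_{f_i}$, and a symplectomorphism intertwining generating vector fields intertwines their flows) and injectivity ($\rho_*\mathcal{X}_{f_i}=\mathcal{X}_{f_i'}$ forces $f_i-\rho^*f_i'$ to be locally constant). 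The one extra assumption your injectivity step uses, connectedness of $M$, is standard in this context and implicit in the paper. Your closing observation about the automorphism $\La$ in the definition of symplectic $G$\--embedding is well-taken: the statement only holds if ${\sim}_{\R^n}$ is read as strict $\R^n$\--equivariance ($\La=\mathrm{id}$), which is how the paper's phrase ``$\R^n$\--equivariant symplectomorphism'' should be parsed; allowing nontrivial $\La$ would correspond to composing $F$ with a linear automorphism, a larger equivalence than $\sim_\mathcal{I}$ permits. In short: same mechanism, but your write-up supplies the bookkeeping the paper omits.
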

 
 \begin{proof}
  By Lemma~\ref{lem_grpact_technical}
  we know that the Hamiltonian flow action
  must be almost everywhere locally free because the Hamiltonian
  vector fields of an integrable system are by definition independent
  almost everywhere.  Next suppose that $\R^n$ acts Hamiltonianly on $M$ in such a way 
  that the action is almost everywhere locally free.
  Since the action is Hamiltonian there exists a momentum map 
  $\mu\colon M\to\mathrm{Lie}(\R^n)^*$.  Define 
  $F=(f_1, \ldots, f_n)\colon M\to \R^n$ by $F = A\circ\mu$ where
  $A\colon \mathrm{Lie}(\R^n)^*\to\R^n$
  is the standard identification which is induced by the standard basis
  $\{e_1, \ldots, e_n\}$ of $\R^n$.
  These functions Poisson commute because 
  action by the components of $\R^n$ commute and are linearly
  independent at almost all points because the group action is 
  almost everywhere locally free (Lemma~\ref{lem_grpact_technical}).
  Thus, $(M,\om, F)$ is an integrable
  Hamiltonian system as in Definition~\ref{def_intsystem}.
  Let $\{v_1, \ldots, v_n\}$ be the standard basis of
  $\mathrm{Lie}(\R^n)\cong\R^n$ induced by the standard
  basis of $\R^n$.
  Let $v_M$ denote the vector field on $M$ generated
  by $v\in\mathrm{Lie}(\R^n)$ via the action of $G$.
  Then $\langle \mu, v_i\rangle = f_i\colon M\to\R$  
  so $\mathrm{d}f_i = \om((v_i)_M, \cdot)$
  which means that the Hamiltonian vector field associated to
  $f_i$ is $(v_i)_M$. Thus the Hamiltonian flow action
  related to $F$ is the original action of $\R^n$.
 \end{proof}
  
Here we fix the identification between $\mathrm{Lie}(\T^n)^*$ and $\R^n$
that we will use for the remainder of the paper.
We specify our convention
by choosing an epimorphism from $\R$ to $\T^1$,
which we take to be $x\mapsto e^{2\sqrt{-1}x}$.

 \subsection{Hamiltonian \tex{$\T^k$}\--actions}
 Atiyah~\cite{At1982} and Guillemin-Sternberg~\cite{GuSt1982} proved that
 if $(M,\om,\phi, \mu)$ is a compact connected Hamiltonian $\T^k$\--manifold,
 then $\mu(M)\subset\mathrm{Lie}(\T^k)^*$ is the convex hull of the image of the
 fixed points of the $\T^k$\--action. The case in which $k=n$ and the torus action
 is effective enjoys very special
 properties, and in such a case $(M,\om,\phi, \mu)$  is called a
 \emph{symplectic toric manifold}, or a \emph{toric integrable system}. 
 An \emph{isomorphism} of such manifolds is a symplectomorphism which intertwines
 their respective momentum maps. We denote by $\hamT$ the category of $2n$\--dimensional 
 symplectic toric manifolds with morphisms as symplectic $\T^n$\--embeddings
 and we denote equivalence by toric isomorphism by ${\approx}_{\mathrm{T}}$.
 In general being an invariant is weaker than being monotonic, but in the case of toric
 manifolds these are equivalent because symplectic $\T^n$\--embeddings between toric manifolds are automatically 
 $\T^n$\--equivariant symplectomorphisms.
 Delzant proved~\cite{De1988} that in this case 
 $\mu(M)$ is  a \emph{Delzant polytope}, i.e. simple, rational, and smooth, and
 that
 \begin{align*}
  \Psi\colon &\mhamT \to \P_\toric\\
 &[(M, \om, \phi, \mu)] \mapsto \mu(M)
 \end{align*}
 is a bijection, where $\P_\toric$ denotes the set of $n$\--dimensional Delzant polytopes. 
 Let $\mathrm{Ham}^{2n, \T^n}\to\sympplain^{2n, \T^n}$ be given by $(M,\om,\phi,\mu)\mapsto (M,\om,\phi)$
 and let $\sympT$ denote the image of $\hamT$ under this map.
 Also let $\sim_\mathrm{T}$ denote equivalence on $\sympT$
 by $\T^n$-equivariant symplectomorphisms.

  \subsection{Hamiltonian \tex{$(S^1\times \R)$}\--actions}
  We say that an integrable system $(M, \om, F=(J,H)\colon M\to\R^2)$ is a 
  \emph{semitoric integrable system}   or \emph{semitoric manifold} if $(M,\om)$ is a $4$\--dimensional connected symplectic manifold,
  $J$ is a proper momentum map for an effective Hamiltonian $S^1$\--action on $M$, and
  $F$ has only non-degenerate singularities which have no 
  real-hyperbolic blocks (see~\cite[Section 4.2.1]{PeVNsymplthy2011}).
  A semitoric integrable system is \emph{simple} if there is at most
  one singular point of focus-focus type in $J^{-1}(x)$ for each $x\in\R$. 
  Let $(M_i, \om_i, F_i = (J_i, H_i))$ be a semitoric manifold for $i=1,2$.
  A \emph{semitoric isomorphism} between them is a symplectomorphism
  $\rho\colon M_1\to M_2$ such that $\rho^* (J_2, H_2) = (J_1, f(J_1, H_1))$
  where $f\colon\R^2\to\R$ is a smooth function for which $\deriv{f}{H_1}$
  is everywhere nonzero.
  Let $\hamST$ denote the category of simple semitoric systems
  and let ${\approx}_{\mathrm{ST}}$ denote equivalence by semitoric isomorphism.
  Let $\sympST$ denote the image of $\hamST$ under the map 
  $\mathrm{Ham}^{4,  S^1\times\R}\to\sympplain^{4, S^1\times\R}$
  given by $(M, \om, \phi, \mu)\mapsto(M,\om,\phi)$
  and let $\sim_{\mathrm{ST}}$ denote the
  equivalence on $\sympST$ inherited from $\sim_{\mathrm{ST}}$
  on $\hamST$.
  
  The number of focus-focus singular points of an integrable system must
  be finite~\cite{VN2007}, and we denote it by $m_f$.
  
 \subsubsection{Invariant of focus-focus singularities}
 It is proven in~\cite{VN2003} that the structure in the neighborhood of
 a fiber over a focus-focus point  is
 determined by a Taylor series.
  Let $\Rxy$ denote the space of real formal Taylor series in two variables $X$
  and $Y$ and let $\Rxyz\subset\Rxy$ denote the subspace of series 
  $\sum_{i,j>0}\si_{i,j}X^i Y^j$
  which have $\si_{0,0}=0$ and $\si_{0,1}\in[0, 2\pi)$. The Taylor series invariant consists of $\mf$ elements of $\Rxyz$, one
 for each focus-focus singular point.

 \subsubsection{Affine and twisting-index invariants}
  Denote the set of rational polygons in $\R^2$
  by $\polyg$. For $\la\in\R$ let
  $\ell_\la$ denote the set of $(x,y)\in\R^2$ such that $x = \la$, Let $\vertr$
  denote the collection of all $\ell_\la$ as $\la$ varies in $\R$.
  Let $\pi_i\colon\R^2\to\R$ denote the projection onto the $i^{\mathrm{th}}$
  coordinate for $i=1,2$.
  Notice that elements of $\polyg$ can be non-compact.
  A \emph{labeled weighted polygon of complexity $\mf\in\Z_{\geq0}$} is 
  an element
  $$\De_w = \big(\De, (\ell_{\la_j}, \ep_j, k_j)_{j=1}^\mf\big)\in\polyg\times\big(\vertr\times\{-1,+1\}\times\Z\big)^\mf$$
  with
  $\min_{s\in\De}\pi_1(s)<\la_1<\ldots<\la_\mf<\max_{s\in\De}\pi_1(s)$.  We denote the space of labeled weighed polygons by $\lwpolyg$. Let 
 \begin{equation}\label{eqn_T}
 T = \left(\begin{array}{cc} 1&0 \\1&1 \end{array}\right)\in\mathrm{SL}_2(\Z)
 \end{equation}
 and for $v_1, \ldots, v_n \in\Z^n$ let 
 $\det(v_1, \ldots, v_n)$ denote the determinant of the matrix with columns 
 given by $v_1, \ldots, v_n$.

 \begin{figure}[ht]
  \centering
  \includegraphics[height=160pt]{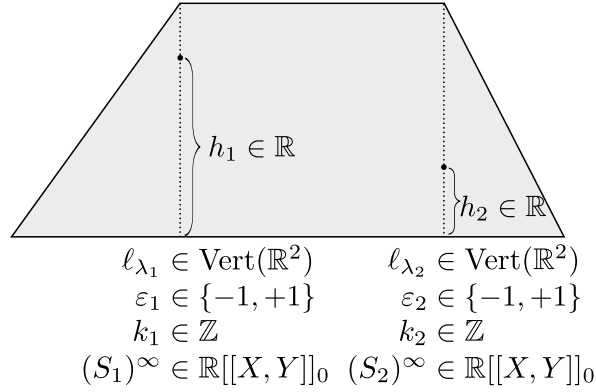}
  \caption{The complete invariant
   of a semitoric system is a collection of these objects.}
   \label{fig_primitive}
 \end{figure}
 
  The \emph{top boundary of} $\De \in \polyg$ is
  the set $\partial^{\mathrm{top}}\De$ of $(x_0, y_0)\in\De$ such that 
  $y_0$ is the maximal $y\in\R$ such that $(x_0,y)\in\De$.
  A point $p\in\partial\De$ is a \emph{vertex} of $\De$ if the 
  edges meeting at $p$ are not co-linear.  Let $p$ be 
  a vertex of $\De$ and let $u,v\in\Z^2$ be primitive vectors directing
  the edges adjacent to $p$ ordered so that $\det(u,v)>0$.
  Then we say that:\\
  -  $p$ \emph{satisfies the Delzant condition} if $\det(u,v) = 1$; \\
-  $p$ \emph{satisfies the hidden condition} if $\det(u, Tv) = 1$;\\
- $p$ \emph{satisfies the fake condition} if $\det(u, Tv) = 0$.

  We say that $\De$ has \emph{everywhere finite height} if
  $\De\cap\ell_\la$ is either compact or empty for all $\la\in\R$.
   A \emph{primitive semitoric polygon of complexity $\mf\in\Z_{\geq0}$}~\cite{KaPaPe2015}
   is a labeled weighted polygon 
   $\big(\De, (\ell_{\la_j}, \ep_j, k_j)_{j=1}^\mf\big)\in\lwpolyg$
   such that:\\
    (1) $\De$ has everywhere finite height;\\
    (2) $\ep_j=+1$ for all $j=1, \ldots, \mf$;\\
    (3) any point in $\partial^{\mathrm{top}}\De\cap\ell_{\la_j}$ for
      $j=1, \ldots, \mf$ satisfies either the hidden or fake condition 
      (and is referred to as either a \emph{hidden corner} or a \emph{fake corner}, respectively);\\
    (4) all other corners satisfy the Delzant condition, and are known
      as \emph{Delzant corners}. 
  The set of primitive semitoric polygons is denoted by $\pstpolyg$.
 
 For $\mf\in\Z_{\geq0}$ let
 $ G_\mf = \{-1, +1\}^\mf $ and $ \mathcal{G} = \{\,T^k \mid k\in\Z\,\}$
 where $T$ is as in Equation \eqref{eqn_T}. For $\la\in\R$ and $k\in\Z$ let 
 $t_{\ell_\la}^k\colon \R^2\to\R^2$ denote the map which acts as the identity on the left of the
 line $\ell_\la$ and acts as $T^k$ relative to an origin 
 placed arbitrarily on the line $\ell_\la$ to the right of $\ell_\la$.  Now for 
 $\vec{u} = (u_1, \ldots, u_\mf)\in\{-1, 0, 1\}^\mf$ and
 $\vec{\la} = (\la_1, \ldots, \la_\mf) \in\R^\mf$ define
 $t^{\vec{u}}_{\vec{\la}}\colon \R^2\to\R^2$ by
 \[ t^{\vec{u}}_{\vec{\la}} = t^{u_1}_{\ell_{\la_1}} \circ \ldots \circ t^{u_\mf}_{\ell_{\la_\mf}}.\]
 We define the action of an element of 
 $G_\mf\times\mathcal{G}$ on a labeled weighted polygon by
 $$
  \big((\ep_j')_{j=1}^\mf, T^k \big) \cdot \big(\De, (\ell_{\la_j}, \ep_j, k_j)_{j=1}^\mf\big) 
  = \big( t^{\vec{u}}_{\vec{\la}} \circ T^k (\De), (\ell_{\la_j}, \ep_j' \ep_j, k + k_j)_{j=1}^\mf\big)
 $$
 where $\vec{\la} = (\la_1, \ldots, \la_\mf)$ and 
 $
  \vec{u} = \left( \frac{\ep_j - \ep_j\ep_j'}{2}\right)_{j=1}^\mf.
 $
 This action may not preserve the convexity of $\De$
 but it is shown in~\cite[Lemma 4.2]{PeVNconstruct2011} that the
 orbit of a primitive semitoric polygon consists only of elements of $\lwpolyg$.
  \begin{definition}[\cite{PeVNconstruct2011}]
   A \emph{semitoric polygon} is the orbit under $G_\mf\times\mathcal{G}$
   of a primitive semitoric polygon.
  \end{definition}
  The collection of semitoric polygons
  is denoted by $\stpolyg = (G_\mf\times\mathcal{G})\cdot\pstpolyg$. 
  The orbit of 
 $\De_w = \big(\De, (\ell_{\la_j}, \ep_j, k_j)_{j=1}^\mf\big)\in\pstpolyg$
 is given by
 \[
  [\De_w] =  \{\, \big( t^{\vec{u}}_{\vec{\la}} \circ T^k (\De), (\ell_{\la_j}, 1 - 2u_j, k + k_j)_{j=1}^\mf\big) \,|\, \vec{u}\in\{0,1\}^\mf, k\in\Z\,\}. 
 \]
 The corners of any element of $[\De]$ are identified as hidden, fake,
 or Delzant similar to the case of the primitive semitoric polygon.

 \subsubsection{Volume invariant}
 For each $j=1, \ldots, \mf$ we let $h_j$
 denote the height of the image of the $j^{\mathrm{th}}$ focus-focus point from the
 bottom of the semitoric polygon.  Formally, this amounts to 
 $h_1, \ldots, h_\mf\in\R$ satisfying
 $ 0<h_j<\mathrm{length}(\pi_2(\De\cap\ell_{\la_j}))$
 for each $j=1, \ldots, \mf$.

 \subsubsection{Classification}
 Semitoric systems are classified by the invariants we have just reviewed.
 That is, the complete invariant of a semitoric system is an integer $\mf$,
 $\mf$ Taylor series, a collection of $\mf$ real numbers, and a labeled
 weighed semitoric polygon.  A single element of this orbit is shown in Figure~\ref{fig_primitive}.
 The complete invariant is an infinite
 family of such labeled weighted polygons, formed by a countably infinite
 number of subfamilies of size $2^\mf$ each parameterized by 
 $\vec{\ep} \in\{-1, +1\}^\mf$ (Figure~\ref{fig_familyofpoly}).   
 
 \begin{definition}[\cite{PeVNconstruct2011}] \label{stlist}
 A \emph{semitoric list of ingredients} is given by:
  \begin{enumerate}
   \item \emph{the number of focus-focus singularities invariant:}
    $\mf\in\Z_{\geq0}$;
   \item \emph{the Taylor series invariant:}
    a collection of $\mf$ elements of $\Rxyz$;
   \item \emph{the affine and twisting index invariants:}
    a semitoric polygon of complexity $\mf$, 
    the $(G_\mf\times\mathcal{G})$\--orbit of some 
    $\De_w = (\De, (\ell_{\la_j}, \ep_j, k_j)_{j=1}^\mf)\in\pstpolyg$;
   \item \emph{the volume invariant:}
    a collection of real numbers $h_1, \ldots, h_\mf\in\R$
    such that $0<h_j<\mathrm{length}(\pi_2(\De\cap\ell_{\la_j}))$
    for each $j=1, \ldots, \mf$.
  \end{enumerate}
  \end{definition}
  
  Let $\ingred$ denote the collection of all semitoric lists of ingredients.
  In~\cite{PeVNconstruct2011} the authors prove that
  semitoric manifolds modulo isomorphisms are classified by semitoric lists of ingredients,
  that is,
   \begin{align}
    \Phi\colon&  \mhamST \to\ingred\label{eqn_Phi}\\
    &(M, \om, (J,H)) \mapsto \big(\mf, ((S_j)^\infty)_{j=1}^\mf, [\De_w], (h_j)_{j=1}^\infty\big)\nonumber
   \end{align}
  is a bijection.

 \begin{figure}[ht]
 \centering
  \includegraphics[height=220pt]{}
  \caption{Complete invariant of a semitoric system.}
  \label{fig_familyofpoly}
 \end{figure}

\section{Symplectic \tex{$\T^n$}\--capacities}\label{sec_Tncap}

In this section we construct a symplectic $\T^n$\--capacity on the
space of symplectic toric manifolds. 
Recall $\mhamT$ is the moduli space of $2n$\--dimensional symplectic
toric manifolds up to $\T^n$\--equivariant symplectomorphisms which preserve the 
moment map.
In~\cite{FiPe2014, Pe2006, Pe2007, PeSc2008} the authors study the \emph{toric 
optimal density function} 
$\Om\colon \mhamT\to(0, 1],$
which assigns to each symplectic toric manifold the
fraction of that manifold which can be filled by equivariantly
embedded disjoint open balls.  This function is not a capacity
because it is not monotonic or conformal. Next we study a
modified version of this function which is a capacity.

For  $M\in\sympplain^{2n,\T^n}$ by a
\emph{$\T^n$\--equivariantly embedded ball} we mean the image $\phi(\mathrm{B}^{2n}(r))$
of a symplectic $\T^n$\--embedding $\phi\colon \mathrm{B}^{2n}(r)\immtn M$ for some $r>0$.
A \emph{toric 
ball packing} of $M$~\cite{Pe2006} is a disjoint union
$ 
 P = \bigsqcup_{\al\in\mathcal{A}} B_\al
$
where $B_\al\subset M$ is a symplecticly and $\T^n$\--equivariantly
embedded ball in $M$ for each $\al\in\mathcal{A}$, where $\mathcal{A}$
is some index set.  That is,
for each $\al\in\mathcal{A}$ there exists some $r_\al>0$ and
some symplectic $\T^n$\--embedding $\phi_\al \colon  \mathrm{B}^{2n} (r_\al)\immtn M$ 
such that 
\[\phi_\al (\mathrm{B}^{2n} (r_\al)) = B_\al.\]
An example is shown in Figure~\ref{fig_ballpackingdef}. 
Recall the toric packing capacity $\toricpack:\sympT\to[0,\infty]$
defined in Equation~\eqref{eqn_toricpack}.
In the following for $M\in\sympplain^{2n,\T^n}$ let
$\cBnn(M)$ be defined by first lifting the action of $\T^n$ on $M$
to an action of $\R^n$ and applying the usual $\cBnn$ to the resulting
symplectic $\R^n$\--manifold.
\begin{figure}[ht]
 \centering
 \includegraphics[height = 80pt]{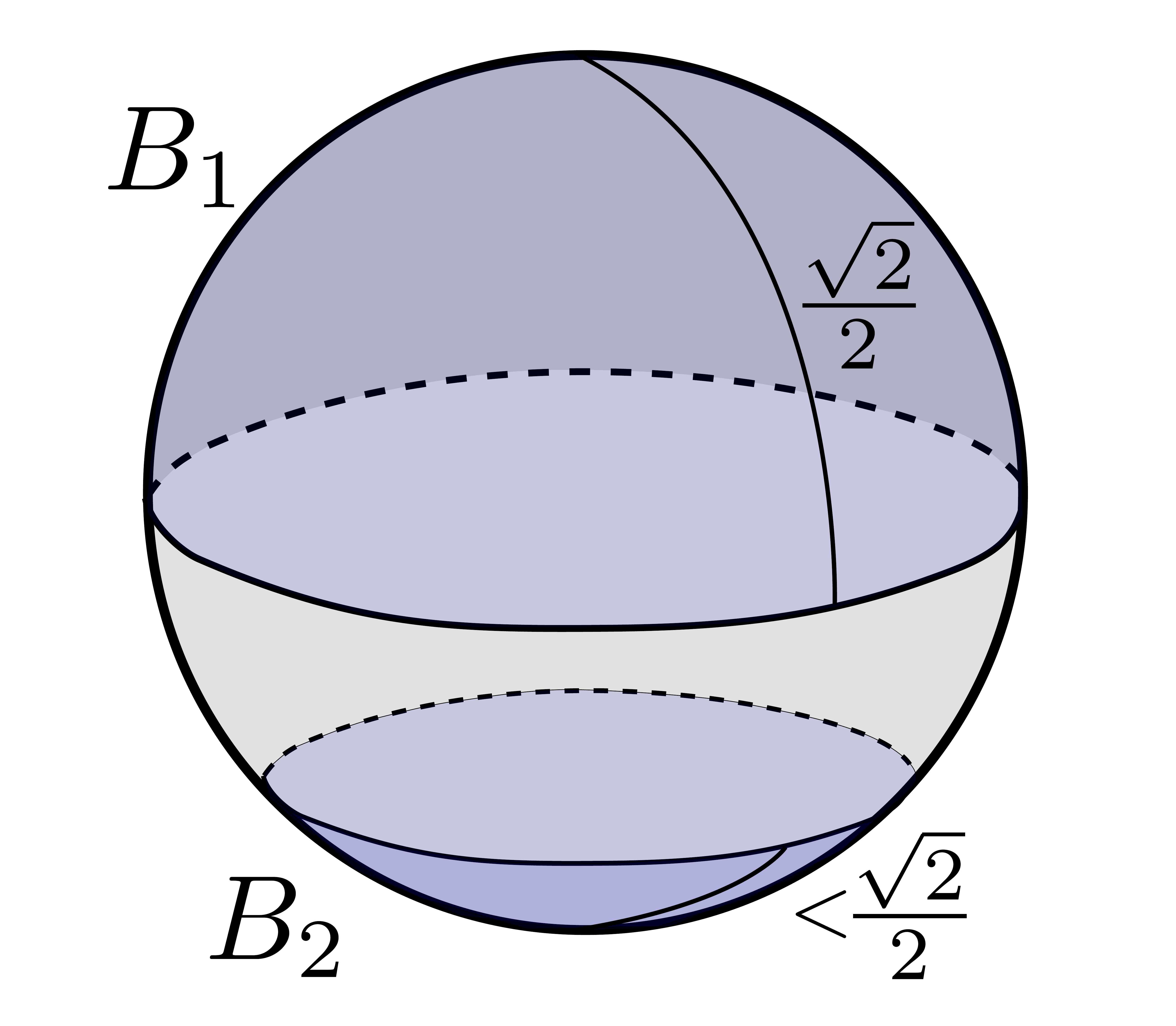}
 \caption{Toric ball packing of $S^2$ by symplectic $\T^2$\--disks.}
  \label{fig_ballpackingdef}
\end{figure}

 \begin{lemma}\label{lem_packingandradius}
  Let $M\in\sympT$, $N\in\sympplain^{2n, \T^n}$ be such that the $\T^n$\--action
  on $N$ has $\ell\in\Z_{\geq0}$  fixed points.  If there is
  a symplectic $\T^n$\--embedding $M\immtn N$ then
  $
   \toricpack (M) \leq \ell^{\nicefrac{1}{2n}} \cBnn(N).
  $
 \end{lemma}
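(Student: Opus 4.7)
The plan is to bound the volume of an arbitrary toric ball packing of $M$ by pushing all the balls into $N$ via the given $\T^n$\--embedding and exploiting that each ball must be centered at a fixed point.

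First I would fix a symplectic $\T^n$\--embedding $\iota\colon M\immtn N$ and an arbitrary toric ball packing $P = \bigsqcup_{\al\in\mathcal{A}} B_\al$ of $M$, where $B_\al = \phi_\al(\mathrm{B}^{2n}(r_\al))$ for a symplectic $\T^n$\--embedding $\phi_\al\colon \mathrm{B}^{2n}(r_\al)\immtn M$. Since the composition of two symplectic $\T^n$\--embeddings is again a symplectic $\T^n$\--embedding (with respect to the composition of the corresponding automorphisms of $\T^n$), each $\psi_\al := \iota\circ\phi_\al\colon\mathrm{B}^{2n}(r_\al)\immtn N$ is a symplectic $\T^n$\--embedding.

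Next I would show that each $\psi_\al$ sends the origin $0\in\mathrm{B}^{2n}(r_\al)$ to a fixed point of the $\T^n$\--action on $N$. Indeed, $0$ is the unique fixed point of the standard $\T^n$\--action on $\mathrm{B}^{2n}(r_\al)$, and if $\psi_\al$ is equivariant with respect to an automorphism $\La_\al\colon\T^n\to\T^n$, then $\psi_\al(0)$ is fixed by $\La_\al(\T^n)=\T^n$. This defines a map $\al\mapsto \psi_\al(0)$ from $\mathcal{A}$ to the set of $\ell$ fixed points of $N$. This map is injective: if $\psi_\al(0)=\psi_\be(0)$ for $\al\neq\be$, then, since each $\psi_\al$ is an open map, both $\psi_\al(\mathrm{B}^{2n}(r_\al))$ and $\psi_\be(\mathrm{B}^{2n}(r_\be))$ contain an open neighborhood of the common fixed point, contradicting the disjointness of the $B_\al$'s in $M$ (which is preserved by the embedding $\iota$). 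Therefore $|\mathcal{A}|\leq \ell$.

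Now I would bound each radius $r_\al$. Lifting the $\T^n$\--action on $N$ to an $\R^n$\--action via the quotient $\R^n\to\T^n$ (and likewise on $\mathrm{B}^{2n}(r_\al)$), the embedding $\psi_\al$ becomes a symplectic $\R^n$\--embedding $\mathrm{B}^{2n}(r_\al)\immrn N$, and hence $r_\al \leq \cBnn(N)$ by definition of the $(n,n)$\--equivariant Gromov radius.

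Combining these two facts yields
\[
\vol(P) = \sum_{\al\in\mathcal{A}}\vol(\mathrm{B}^{2n}(r_\al)) \leq \ell\cdot\vol(\mathrm{B}^{2n}(\cBnn(N))) = \ell\cdot\cBnn(N)^{2n}\cdot\vol(\mathrm{B}^{2n}),
\]
and taking the supremum over all packings $P$ of $M$ and then the $2n$\--th root gives $\toricpack(M)\leq\ell^{1/(2n)}\cBnn(N)$, as desired. The only subtle point is the injectivity of $\al\mapsto\psi_\al(0)$, which is where disjointness of the balls in the packing interacts with the equivariance to enforce the $\ell$ upper bound on the cardinality of $\mathcal{A}$; the rest is essentially formal.
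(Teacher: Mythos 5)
Your proof is correct and follows essentially the same route as the paper's: bound the number of balls by the number of $\T^n$-fixed points (since ball centers must map to fixed points), bound each radius by the $(n,n)$-equivariant Gromov radius, and combine. The only organizational difference is that you push the packing forward into $N$ via $\iota$ and bound everything there directly, whereas the paper first obtains $\toricpack(M)^{2n}\leq\chi(M)\cBnn(M)^{2n}$ working inside $M$ and then transfers via $\chi(M)\leq\ell$ and the monotonicity $\cBnn(M)\leq\cBnn(N)$ — a purely cosmetic difference in bookkeeping. (One small simplification available in your injectivity step: the two images already share the fixed point itself, so openness of the embeddings is not needed to get the contradiction with disjointness.)
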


 \begin{proof}
  Since the center of $\mathrm{B}^{2n}(r)$, $r>0$, is a fixed point of the $\T^n$\--action
  we see that the maximal number of such balls that can be
  simultaneously equivariantly embedded with disjoint images
  into $M$ is the Euler characteristic $\chi(M)$ of $M$,
  which is the number of fixed points of the $\T^n$\--action
  on $M$.  Each of these balls has radius at most $\cBnn(M)$.  For $r>0$
  we have that $\vol(\mathrm{B}^{2n}(r)) = r^{2n} \vol(\mathrm{B}^{2n})$.  Therefore
 $$
   (\toricpack (M))^{2n}\vol(\mathrm{B}^{2n})  \leq \chi (M) \vol\big(\mathrm{B}^{2n}(\cBnn(M))\big)
                    = \chi(M) \big( \cBnn (M)\big)^{2n}\vol(\mathrm{B}^{2n}).
$$
  Since $\T^n$\--embeddings send fixed points to fixed points and $M\immtn N$
  we know that $\chi(M) \leq \ell$.  Furthermore, since $M\immtn N$ and $\cBnn$ is
  a symplectic $\T^n$\--capacity by Proposition~\ref{propintro_equivgromov}
  we have that $\cBnn(M) \leq \cBnn (N)$. Hence  
  $\toricpack (M)\leq \ell^{\nicefrac{1}{2n}} \cBnn(N)$.
 \end{proof}

\begin{prop}\label{prop_toricpack}
 The toric packing capacity is a symplectic $\T^n$\--capacity
 on $\sympT$.
\end{prop}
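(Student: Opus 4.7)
The plan is to verify each condition required of a symplectic $\T^n$\--capacity (Definitions~\ref{def_eqvcap} and~\ref{def_RTcapacity}): monotonicity, conformality, and tameness by both $\mathrm{B}^{2n}$ and $\mathrm{Z}^{2n}$. Monotonicity and conformality follow quickly from the definition of a toric ball packing; the main content is tameness, which rests on Lemma~\ref{lem_packingandradius} together with the fact that $\mathrm{B}^{2n}$ and $\mathrm{Z}^{2n}$ each have a unique fixed point under the standard $\T^n$\--action (the origin) and that $\cBnn(\mathrm{B}^{2n}) = \cBnn(\mathrm{Z}^{2n}) = 1$, as observed after the proof of Proposition~\ref{propintro_equivgromov}.

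For monotonicity, if $\rho\colon M\immtn N$ is a symplectic $\T^n$\--embedding and $P = \bigsqcup_\alpha B_\alpha$ is a toric packing of $M$ with $B_\alpha$ the image of $\psi_\alpha\colon \mathrm{B}^{2n}(r_\alpha)\immtn M$, then the compositions $\rho\circ\psi_\alpha\colon \mathrm{B}^{2n}(r_\alpha)\immtn N$ are symplectic $\T^n$\--embeddings with pairwise disjoint images, and symplecticity of $\rho$ preserves each $\vol(B_\alpha)$; taking the supremum gives $\toricpack(M)\leq\toricpack(N)$. For conformality, the scaling $\sigma_c\colon z\mapsto cz$ on $\C^n$ is $\T^n$\--equivariant and satisfies $\sigma_c^*\om_0 = c^2\om_0$, so it puts toric ball embeddings into $(M,\omega,\phi)$ in bijection with those into $(M,\lambda\omega,\phi)$ (using complex conjugation in the first factor to handle $\lambda<0$). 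Tracing the resulting scaling of ball radii and packing volumes through the definition of $\toricpack$ gives the conformality relation.

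The main obstacle is the tameness, and I would show that both $\mathrm{B}^{2n}$ and $\mathrm{Z}^{2n}$ tame $\toricpack$ with the constant $a=1$. For the upper bound in Definition~\ref{def_tame}, since the origin is the only fixed point of the standard $\T^n$\--action on either of these manifolds, Lemma~\ref{lem_packingandradius} applied with $\ell=1$ gives
\[
  \toricpack(M)\leq 1^{\tfrac{1}{2n}}\cBnn(\mathrm{B}^{2n}) = 1\qquad\text{and}\qquad\toricpack(M)\leq 1^{\tfrac{1}{2n}}\cBnn(\mathrm{Z}^{2n}) = 1
\]
for any $M\in\sympT$ admitting a symplectic $\T^n$\--embedding into $\mathrm{B}^{2n}$ or $\mathrm{Z}^{2n}$ respectively. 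For the lower bound, if $\mathrm{B}^{2n}\immtn P$ with $P\in\sympT$ then the image of this single embedding is a toric packing of $P$ of volume $\vol(\mathrm{B}^{2n})$, so $\toricpack(P)\geq 1$; if $\mathrm{Z}^{2n}\immtn P$ one simply precomposes with the standard $\T^n$\--equivariant inclusion $\mathrm{B}^{2n}\hookrightarrow\mathrm{Z}^{2n}$ to reduce to the previous case. Hence $\toricpack$ is tamed by both $\mathrm{B}^{2n}$ and $\mathrm{Z}^{2n}$, and it is precisely for this upper bound that Lemma~\ref{lem_packingandradius} was established.
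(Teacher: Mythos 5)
Your proposal is correct and takes essentially the same route as the paper: conformality via rescaling toric ball embeddings, monotonicity by pushing packings forward through a symplectic $\T^n$\--embedding, and tameness with constant $a=1$ by combining Lemma~\ref{lem_packingandradius} (with $\ell=1$, since $\mathrm{B}^{2n}$ and $\mathrm{Z}^{2n}$ each have a unique $\T^n$\--fixed point) with the observation $\cBnn(\mathrm{B}^{2n})=\cBnn(\mathrm{Z}^{2n})=1$ for the upper bound, and a single embedded unit ball for the lower bound. The only cosmetic differences are that you verify all four tameness inequalities explicitly (the paper shows two and leaves the others implicit via $\mathrm{B}^{2n}\immtn\mathrm{Z}^{2n}$, exactly as you observe), and you phrase conformality via the global scaling $\sigma_c$ on $\C^n$ rather than the paper's rescaled embedding $\rho_\la$ --- both amount to the same bijection of toric ball embeddings.
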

 
\begin{proof} 
 Let $M\in\sympT$ with $\chi(M)\in\Z_{\geq0}$ fixed points and fix any 
 ordering of these points.
 Notice that
 $ \toricpack(M)$ is the supremum of 
 \[\{\, \|\vec{r}\|_{2n} \mid  \vec{r} \in\R^{\chi(M)},\,\, P_M (\vec{r})\subset M\textrm{ is a toric packing}\,\}\]
 where $\vec{r}=(r_1, \ldots, r_{\chi(M)})\in\R^{\chi(M)}$,
 \[
  \|\vec{r}\|_{2n} = \left(\sum_{j=1}^{\chi(M)} r_j^{2n}\right)^{\nicefrac{1}{2n}}
 \]
 is the standard $\ell^{2n}$\--norm, and
 $P_M (\vec{r})\subset M$ is the toric ball packing
 of $M$ in which $\mathrm{B}^{2n}(r_j)$ is embedded at the $j^{\mathrm{th}}$ fixed point of
 $M$ for $j=1, \ldots, \chi(M)$. 
 Suppose that $\rho\colon  \mathrm{B}^{2n}(r)\immtn M$ is a symplectic $\T^n$\--embedding into
 $(M, \om, \phi)$ for some $r>0$.
 Then for any $\la\in\R\setminus\{0\}$ the map $\rho_\la\colon  \mathrm{B}^{2n}(\abs{\la} r)\immtn M$
 given by 
 \[\rho_\la (z) = \rho (\nicefrac{z}{\abs{\la}})\]
 is a symplectic
 $\T^n$\--embedding into $(M, \la\om, \phi)$.  Thus if
 $P_M (\vec{r})$ is a toric packing of $(M, \om, \phi)$
 then $P_M (\abs{\la}r_1, \ldots, \abs{\la}r_{\chi(M)})$ is a toric
 ball packing of $(M, \la\om, \phi)$ for any $\la\in\R\setminus\{0\}$.  This and the fact
 that $\|\la r\|_{2n} = \abs{\la}\|r\|_{2n}$ for all $r\in\R^{\chi(M)}$ and $\la\in\R$
 imply that $\toricpack$ is conformal. Now suppose that $M, M'\in\sympT$ and $\rho\colon M\immtn M'$.  If $P\subset M$
 is a toric ball packing of $M$ then $\rho(P)\subset M'$ is a
 toric ball packing of $M'$ of the same volume so $\toricpack(M) \leq \toricpack(M')$ and
 we see that $\toricpack$ is monotonic. Finally, suppose that there is a symplectic 
 $\T^n$\--embedding $M\immtn \mathrm{Z}^{2n}$.  Then,
 since  $\mathrm{Z}^{2n}$ has only one point fixed by the $\T^n$\--action 
 and recalling that  $\cBnn(\mathrm{Z}^{2n})=1$, it follows from Lemma~\ref{lem_packingandradius} that 
 $$\toricpack(M) \leq (1)^{\nicefrac{1}{2n}} \cBnn(\mathrm{Z}^{2n}) = 1.$$ 

 Finally, suppose that $\rho\colon \mathrm{B}^{2n} \immtn M$ is a symplectic $\T^n$\--embedding. Then 
 $P = \rho(\mathrm{B}^{2n})\subset M$ is
 a toric ball packing of $M$ and thus 
 $$
  \toricpack(M) \geq \left(\frac{\vol(P)}{\vol(\mathrm{B}^{2n})} \right)^{\nicefrac{1}{2n}} = 1. 
$$
 Hence $\toricpack$ is tame.
 \end{proof}

 \begin{example}
  Let $M\in\sympT$.  In~\cite{Pe2007} it is shown that there exists a $\Z$\--valued function
  $\mathrm{Emb}_M\colon \R_{\geq0}\to[0,n!\chi(M)]$ such that the homotopy type of the space of symplectic
  $\T^n$\--embeddings from $\mathrm{B}^{2n}(r)$ into $M$ is given by the disjoint union of $\mathrm{Emb}_M(r)$
  copies of $\T^n$.  Thus, for each $r\in\R_{\geq0}$ we may define a symplectic 
  $\T^n$\--capacity $\mathcal{E}_r$ on $\sympT$ given by
  \begin{align*}
   \mathcal{E}_r \colon  \sympT &\to [0,\infty]\\
                  (M,\om,\phi) &\mapsto (\mathrm{vol}(M))^{\frac{1}{n}} \mathrm{Emb}_M ((\mathrm{vol}(M))^{\frac{1}{n}} r).
  \end{align*}
  Since $\mathrm{Emb}_M$ is invariant up to $\T^n$\--equivariant
  symplectomorphisms~\cite{Pe2007} and symplectic embeddings in $\sympT$ are automatically symplectomorphisms
  we see that $\mathcal{E}_r$ is monotonic and it is an exercise to check that it is conformal.
  It is tame because the space of symplectic $\T^n$\--embeddings of $\mathrm{B}^{2n}$ into $\mathrm{Z}^{2n}$ is homotopic
  to $n!$ disjoint copies of $\T^n$.
 \end{example}

\section{Symplectic \tex{$(S^1\times \R)$}\--capacities} \label{sec_symplSRcap}
  
 In this section we construct a symplectic $(S^1\times \R)$\--capacity
 on the space of semitoric manifolds.
  Let $(M,\om, F=(J,H))$ be a simple semitoric manifold with $\mf$
  focus-focus singular points and let $\{\la_j\}_{j=1}^\mf\subset\R$ be the image
  under $J$ of these points ordered so that $\la_1<\la_2<\ldots<\la_\mf$.
  Let $(\la_j, y_j)$ be the image under $F$ of the $j^{\textrm{th}}$ focus-focus
  singular point and 
  for $\ep\in\{\pm 1\}$ let $\ell^{\ep}_{\la_j}$ be those $(\la_j,y)\in\ell_{\la_j}$ 
  such that $\ep y > \ep y_j$.
  Let $\ell^{\vec{\ep}}=\ell^{\ep}_{\la_1}\cup \ldots \cup \ell^{\ep_\mf}_{\la_\mf}$.
  A homeomorphism 
  $$f\colon F(M)\to f(F(M))\subset \R^2$$
  is a \emph{straightening map for $M$}~\cite{VN2007} if for some
  choice of $\vec{\ep}\in\{\pm1\}^{\mf}$ we have the following:
 $f|_{F(M)\setminus\ell^{\vec{\ep}}}$ is a diffeomorphism
    onto its image;
  $f|_{F(M)\setminus\ell^{\vec{\ep}}}$ is affine with respect
    to the affine structure $F(M)$ inherits from action-angle coordinates
    on $M$ and the affine structure $f(F(M))$ inherits as a subset of $\R^2$;
   $f$ preserves $J$, i.e. $f(x,y) = (x, f^{(2)}(x,y))$;
   $f|_{F(M)\setminus\ell^{\vec{\ep}}}$ extends to a smooth multi-valued map
    from $F(M)$ to $\R^2$ such that for any $c=(x_0, y_0)\in\ell^{\vec{\ep}}$ we have
    $$
     \lim_{\substack{(x,y)\to c\\x<x_0}}\mathrm{d}f(x,y) = T \lim_{\substack{(x,y)\to c\\x>x_0}}\mathrm{d}f(x,y);
     $$ 
       and the image of $f$ is a rational convex polygon.
       Recall that $T$ is the matrix given in Equation~\eqref{eqn_T}.
  We say $f$ is associated to $\vec{\ep}$.

  Let $\mathfrak{T}\subset\mathrm{AGL}_2(\Z)$ be the subgroup including
 powers of $T$ composed with vertical translations.  It was proved in~\cite{VN2007} 
 that a semitoric system $(M,\om,F)$  has a straightening map
  $f\colon M\to\R^2$ associated to each $\vec{\ep}\in\{\pm1\}^{\mf}$, unique up to left composition
  with an element of $\mathfrak{T}$.   Define 
  \begin{equation}\label{eqn_FM}
   \mathcal{F}_M=\{\,f \circ F \mid f \textrm{ is a straightening map for $M$}\,\}.
  \end{equation}
  If $V_a\colon \R^2\to\R^2$ denotes vertical translation by $a\in\R$, then
  \[
   \{\,\widetilde{F}(M)\mid \widetilde{F}\in\mathcal{F}_M\,\} = \{\,V_a(\De)\subset\R^2\mid\De\textrm{ is associated to $M$ and }a\in\R\,\}
  \]
  where a polygon is associated to $M$ if it is an element of the affine invariant of $M$.
  Up to vertical translations the set $\mathcal{F}_M$ is the orbit of a single non-unique 
  function under the action of $G_\mf\times\mathcal{G}$.
  If $\widetilde{F}\in\mathcal{F}_M$ then there exists some
  $\vec{\ep}\in\{-1, +1\}^\mf$ such that
  $\widetilde{F}|_{M^{\vec{\ep}}}\colon M^{\vec{\ep}}\to\R^2$
  is a momentum map for a $\T^2$\--action
  $\phi_{\widetilde{F}}\colon \T^2\times M^{\vec{\ep}}\to M^{\vec{\ep}}$
  where $M^{\vec{\ep}} = M\setminus F^{-1}(\ell^{\vec{\ep}})$.  
    
 \begin{cor}\label{cor_mepsilon}
  The manifold 
  $M^{\vec{\ep}}$ has on
  it a momentum map for a Hamiltonian $\T^2$\--action unique up to $\mathcal{G}$.
  Thus $M^{\vec{\ep}}\in\sympplain^{4, \T^2}$ and the given $\T^2$\--action
  is unique up to composing the associated momentum map with an element
  of $\mathcal{G}$.  
 \end{cor}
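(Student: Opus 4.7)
The plan is to deduce the corollary directly from the uniqueness statement about straightening maps recalled just before \eqref{eqn_FM}, together with the basic observation that adding a constant to a momentum map does not change the underlying torus action.

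For existence, I take the discussion preceding the corollary as my starting point: for an arbitrary choice of $\widetilde{F} = f \circ F \in \mathcal{F}_M$, the restriction $\widetilde{F}|_{M^{\vec{\ep}}}$ is smooth, and since $f$ is affine on $F(M) \setminus \ell^{\vec{\ep}}$, the components of $\widetilde{F}|_{M^{\vec{\ep}}}$ are $\R$-linear combinations of $J$ and $H$ (plus constants). Their Poisson brackets vanish because $(J,H)$ Poisson-commute, and rationality of the image polygon together with the fact that $J$ generates an $S^1$-action ensures that the Hamiltonian flows of both components are periodic with a common period lattice. This gives a Hamiltonian $\T^2$-action $\phi_{\widetilde{F}}$ on $M^{\vec{\ep}}$ with momentum map $\widetilde{F}|_{M^{\vec{\ep}}}$, and in particular $M^{\vec{\ep}} \in \sympplain^{4,\T^2}$.

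For uniqueness, I take any two $\widetilde{F}_1 = f_1 \circ F$ and $\widetilde{F}_2 = f_2 \circ F$ in $\mathcal{F}_M$ associated to the same $\vec{\ep}$. By the theorem of V\~u Ng\d{o}c cited in the excerpt, $f_2 = g \circ f_1$ for some $g \in \mathfrak{T}$. Since $\mathfrak{T}$ consists of powers of $T$ composed with vertical translations, I may write $g = V_a \circ T^k$ with $a \in \R$ and $k \in \Z$. Restricting to $M^{\vec{\ep}}$ gives
\[
 \widetilde{F}_2|_{M^{\vec{\ep}}} = T^k \cdot \widetilde{F}_1|_{M^{\vec{\ep}}} + (0,a).
\]
The constant shift by $(0,a)$ leaves the underlying $\T^2$-action unchanged, since momentum maps are determined by their Hamiltonian vector fields, which depend only on $\mathrm{d}\widetilde{F}$. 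The factor $T^k \in \mathcal{G}$ corresponds to a $\Z$-linear change of basis of $\mathrm{Lie}(\T^2) \cong \R^2$, i.e.\ precomposition of $\phi_{\widetilde{F}_1}$ with the automorphism of $\T^2$ induced by $T^k$. Hence the $\T^2$-action is unique up to composing its momentum map with an element of $\mathcal{G}$, as claimed.

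The main subtlety is separating what is genuinely ambiguous at the level of actions from what is ambiguous at the level of maps: the full ambiguity group $\mathfrak{T}$ contains both vertical translations and powers of $T$, but only the second kind genuinely changes the torus action, so the natural target of the uniqueness statement is the quotient $\mathfrak{T}/\{\text{translations}\} \cong \mathcal{G}$. Beyond tracking this, the argument is entirely formal once V\~u Ng\d{o}c's uniqueness of straightening maps and the construction of the action from $\widetilde{F}$ are in hand.
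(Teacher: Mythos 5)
Your proposal takes essentially the same route as the paper: the corollary is a repackaging of the discussion immediately preceding it, together with the result of~\cite{VN2007} that straightening maps associated to a fixed $\vec{\ep}$ are unique up to left composition by an element of $\mathfrak{T}$, and the elementary observation that the vertical-translation part of $\mathfrak{T}$ does not change the underlying Hamiltonian action, leaving $\mathcal{G}$ as the residual ambiguity. Your uniqueness argument (fix $\vec{\ep}$, write $g = V_a\circ T^k$, note that $V_a$ is absorbed since it does not affect $\mathrm{d}\widetilde{F}$, and that $T^k$ is precisely composition of the momentum map with an element of $\mathcal{G}$, equivalently precomposition of the action with an automorphism of $\T^2$) is clean and correct.

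One imprecision in the existence sketch is worth flagging. You read ``$f$ is affine'' as affine with respect to the ambient coordinates $(J,H)$ on $F(M)$, and conclude that the components of $\widetilde{F}|_{M^{\vec{\ep}}}$ are $\R$-linear combinations of $J$ and $H$ plus constants. That is not what the definition says: $f$ is affine with respect to the integral-affine structure that $F(M)$ inherits from action-angle coordinates, and near a focus-focus value this structure genuinely differs from the flat structure of $\R^2$ (the second action variable has logarithmic singularities in $H$ there, which is exactly the monodromy that $f$ is designed to straighten out). Consequently, $\widetilde{F}|_{M^{\vec{\ep}}}$ is not a global $\R$-affine function of $(J,H)$, and the existence of the $\T^2$-action on $M^{\vec{\ep}}$ does not follow from such a linear-combination structure together with rationality of the image; it is precisely the content of the action-angle theorem and the construction in~\cite{VN2007}, which you (and the paper) are in fact invoking. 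Since your argument ultimately rests on that invocation rather than on the parenthetical reasoning, the conclusion stands, but as written the sketch could mislead a reader into thinking the straightened momentum map is affine in $(J,H)$.
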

 
 We call such actions of $\T^2$ on $M^{\vec{\ep}}$ \emph{induced actions of $\T^2$}.
 Given any $\rho\colon N\to M$ with $\rho (N) \subset M^{\vec{\ep}}$
 define $\rho_{\vec{\ep}}:N\to M^{\vec{\ep}}$
 by $\rho_{\vec{\ep}}(p) = \rho(p)$ for $p\in N$.
 
 \begin{definition}\label{def_stemb}
  Let $(M,\om,F)$ be a semitoric manifold and let 
  $(N, \om_N, \phi)\in\sympplain^{4, \T^2}$.
  A symplectic embedding $\rho\colon N\imm M$ is a 
  \emph{semitoric embedding} if there exists
  $\vec{\ep}\in\{\pm1\}^\mf$ and an induced action 
  $\phi_{\vec{\ep}}:\T^2\times M_{\vec{\ep}}\to M_{\vec{\ep}}$ such that $\rho(N)\subset M^{\vec{\ep}}$ and 
  $\rho_{\vec{\ep}}\colon (N, \om_N, \phi) \adjustedarrow{\T^2} (M^{\vec{\ep}}, \om, \phi_{\vec{\ep}})$
  is a symplectic $\T^2$\--embedding.
 \end{definition}

  Let $(M, \om, F)$ be a semitoric manifold. 
   A \emph{semitoric 
  ball packing} of $M$ is a disjoint union
  $ P = \bigsqcup_{\al\in\mathcal{A}} B_\al$
  where  $B_\al\subset M$ is a semitoricly embedded ball
  in $M$.  
 The \emph{semitoric packing capacity} 
 $
  \semitoricpack\colon \sympST\to[0, \infty]
 $
 is given by
 \[
  \semitoricpack (M) = \left( \frac{\sup\{\,\mathrm{vol}(P)\mid P\subset M\textrm{ is a semitoric ball packing of $M$} \,\}}{\vol (\mathrm{B}^{4})}\right)^{\frac{1}{4}}.
 \]
  In order to show that $\semitoricpack$ is a $(S^1\times\R)$\--capacity we need the following lemmas.
  \begin{lemma}\label{lem_hamvectfield}
  For $i=1,2$ let $(M_i, \om_i)$ be a symplectic manifold, let
  $f_i\colon M_i\to\R$ be a function, and let $\mathcal{X}_{f_i}$
  denote the Hamiltonian vector field of $f_i$ on $M_i$.
  If $\rho\colon M_1\to M_2$ is a symplectomorphism such that
  $\rho_* \mathcal{X}_{f_1}=\mathcal{X}_{f_2}$ then $f_1-\rho^* f_2\colon M_1\to\R$ is constant.
 \end{lemma}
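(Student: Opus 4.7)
The plan is to exploit the defining relation of a Hamiltonian vector field, namely $\om_i(\mathcal{X}_{f_i}, \cdot) = -\mathrm{d} f_i$, together with the fact that $\rho$ is a symplectomorphism, so $\rho^* \om_2 = \om_1$.

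First, I would start from the equation $\om_2(\mathcal{X}_{f_2}, \cdot) = -\mathrm{d} f_2$ on $M_2$ and pull it back via $\rho$. Since $\rho^* \om_2 = \om_1$ and the pullback commutes with the exterior derivative, the pullback of the left-hand side evaluates on a tangent vector $v \in T_p M_1$ as $\om_1\bigl((\rho^{-1})_*(\mathcal{X}_{f_2})_{\rho(p)},\, v\bigr)$, while the right-hand side becomes $-\mathrm{d}(\rho^* f_2)_p(v)$. Next I would use the hypothesis $\rho_* \mathcal{X}_{f_1} = \mathcal{X}_{f_2}$, which is equivalent to $(\rho^{-1})_* \mathcal{X}_{f_2} = \mathcal{X}_{f_1}$, so the previous identity reduces to $\om_1(\mathcal{X}_{f_1}, \cdot) = -\mathrm{d}(\rho^* f_2)$. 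Comparing this with the defining relation $\om_1(\mathcal{X}_{f_1}, \cdot) = -\mathrm{d} f_1$ and using that $\om_1$ is non-degenerate (or just subtracting the two identities as one-forms) gives $\mathrm{d}(f_1 - \rho^* f_2) = 0$.

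From $\mathrm{d}(f_1 - \rho^* f_2) = 0$ on $M_1$ I would conclude that $f_1 - \rho^* f_2$ is locally constant, and hence constant since the manifolds under consideration in this paper (semitoric manifolds, cf.\ the statement's intended application) are connected. There is no real obstacle here: the argument is a direct unwinding of definitions combined with the standard fact that two Hamiltonians generating the same vector field on a connected symplectic manifold differ by a constant. The only point to be careful about is the direction of the pushforward in the hypothesis $\rho_* \mathcal{X}_{f_1} = \mathcal{X}_{f_2}$, which must be used to identify $(\rho^{-1})_* \mathcal{X}_{f_2}$ with $\mathcal{X}_{f_1}$ before comparing the two one-forms on $M_1$.
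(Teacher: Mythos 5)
Your proof is correct and follows essentially the same route as the paper: pull back the defining identity of the Hamiltonian vector field through the symplectomorphism, use $\rho_*\mathcal{X}_{f_1}=\mathcal{X}_{f_2}$ to identify the two one-forms on $M_1$, and conclude $\mathrm{d}(f_1-\rho^*f_2)=0$. You are also right to flag that passing from locally constant to constant uses connectedness of $M_1$, a hypothesis the paper leaves implicit but which holds in its intended application.
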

 
 \begin{proof}
  Notice that
 \begin{align*}
   \mathrm{d}(\rho^* f_2) &= \rho^*(\mathrm{d}f_2)
                          = \rho^*(\io_{\mathcal{X}_{f_2}}\om_2)
                          = \rho^*(\io_{\rho_* \mathcal{X}_{f_1}}\om_2)\\
                         &= \om_2 (\rho_* \mathcal{X}_{f_1}, \rho_* (\cdot))
                          = (\rho^*\om_2) (\mathcal{X}_{f_1}, \cdot)
                          = \io_{\mathcal{X}_{f_1}}\om_1
                          = \mathrm{d}f_1,
  \end{align*}
 thus $f_1$ and $\rho^*f_2$ differ by a constant.
 \end{proof}

 \begin{lemma}\label{lem_equivsymplecto}
  Let $(M_i, \om_i, F_i=(J_i, H_i))$ be semitoric manifolds for $i=1,2$.  
  If $\rho\colon M_1\immTR M_2$ is a symplectic $(S^1\times\R)$\--embedding with respect to
  the Hamiltonian flow action on each system, then
 $$
   \rho^* J_2 = e J_1 + c_J\quad\text{ and }
   \quad\rho^* H_2 = a J_1 + b H_1 + c_H$$
   for some $e\in\{\pm1\}$ and $a, b, c_J, c_H \in \R$ such that $b\neq 0$.
 \end{lemma}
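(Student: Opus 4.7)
The plan is to first classify the continuous automorphisms of $S^1\times\R$, then use the definition of $(S^1\times\R)$-equivariance to translate the embedding condition into an equation relating the Hamiltonian vector fields $\mathcal{X}_{J_1}, \mathcal{X}_{H_1}$ on $M_1$ to the pulled-back vector fields $\rho^*\mathcal{X}_{J_2}, \rho^*\mathcal{X}_{H_2}$, and finally apply Lemma \ref{lem_hamvectfield} componentwise to recover the claimed affine relations between the momentum map components.

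First, I would argue that any Lie group automorphism $\La\colon S^1\times\R\to S^1\times\R$ has the form $\La(\te,t)=(e\te+at,bt)$ with $e\in\{\pm 1\}$, $a\in\R$, and $b\in\R\setminus\{0\}$. Indeed, $S^1$ is the unique maximal compact subgroup, so $\La$ must restrict to an automorphism of $S^1$, forcing $\La(\te,0)=(e\te,0)$ with $e=\pm1$; continuous homomorphisms $\R\to S^1$ and $\R\to\R$ are linear, giving $\La(0,t)=(at,bt)$; and invertibility forces $b\neq 0$ (otherwise $\R$ would map into the compact $S^1$, contradicting injectivity). Differentiating at the identity, the induced Lie algebra map sends $(1,0)\mapsto(e,0)$ and $(0,1)\mapsto(a,b)$.

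Next, because the action $\phi_i$ is the Hamiltonian flow action of $F_i=(J_i,H_i)$, the fundamental vector field on $M_i$ generated by $(s,u)\in\mathrm{Lie}(S^1\times\R)\cong\R^2$ is $s\mathcal{X}_{J_i}+u\mathcal{X}_{H_i}$. The $(S^1\times\R)$-equivariance of $\rho$ with respect to $\La$, differentiated at the identity, yields on $\rho(M_1)$ the identities
\[
\rho_*\mathcal{X}_{J_1}=e\,\mathcal{X}_{J_2},\qquad \rho_*\mathcal{X}_{H_1}=a\,\mathcal{X}_{J_2}+b\,\mathcal{X}_{H_2}=\mathcal{X}_{aJ_2+bH_2}.
\]
Viewing $\rho$ as a symplectomorphism onto its image and applying Lemma \ref{lem_hamvectfield} to the first relation, the functions $J_1$ and $\rho^*(eJ_2)=e\rho^*J_2$ differ by a constant on the connected manifold $M_1$, so $\rho^*J_2=eJ_1+c_J$ for some $c_J\in\R$, using $e^2=1$. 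Applying the lemma to the second relation, $H_1-\rho^*(aJ_2+bH_2)$ is constant, hence
\[
\rho^*H_2=\tfrac{1}{b}\bigl(H_1-a\rho^*J_2\bigr)+\text{const}=\bigl(-\tfrac{ae}{b}\bigr)J_1+\tfrac{1}{b}H_1+c_H,
\]
with $b^{-1}\neq 0$, giving the desired form.

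The only mildly delicate point is the classification of $\mathrm{Aut}(S^1\times\R)$, which I would justify by the maximal compact subgroup argument above rather than a brute-force computation; once this is in hand the remaining steps are bookkeeping via Lemma \ref{lem_hamvectfield}. One subtlety is that the lemma requires a symplectomorphism with connected domain, which holds since $M_1$ is connected by the definition of a semitoric manifold and $\rho$ restricts to a symplectomorphism onto its open image.
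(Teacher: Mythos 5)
Your proof is correct and follows essentially the same strategy as the paper: classify $\mathrm{Aut}(S^1\times\R)$, differentiate the equivariance to get the relations $\rho_*\mathcal{X}_{J_1}=e\,\mathcal{X}_{J_2}$ and $\rho_*\mathcal{X}_{H_1}=a\,\mathcal{X}_{J_2}+b\,\mathcal{X}_{H_2}$, and close with Lemma~\ref{lem_hamvectfield}. The only (minor) divergence is in the automorphism classification: the paper lifts $\Lambda$ to a linear map $\Lambda\in\mathrm{GL}_2(\R)$ on the universal cover $\R^2\to\R/\Z\times\R$ and uses preservation of the lattice $\Z\times\{0\}$ to force the upper-triangular form, whereas you invoke uniqueness of the maximal compact subgroup to pin down the restriction to $S^1\times\{0\}$ and then classify continuous one-parameter homomorphisms on the $\R$ factor; both are standard and equivalent, and neither saves noticeable work, so this is a cosmetic rather than substantive difference.
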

 
 \begin{proof}
 Since $\rho$ is $S^1\times\R$\--equivariant there exists $\La\in\mathrm{Aut}(S^1\times\R)$
 such that $\rho(\phi(g,m_1)) = \phi(\La(g), \rho(m_1))$
 for all $g\in S^1\times\R$ and $m_1\in M_1$.
 Associate $ S^1\times\R$ with $\R/\Z\times\R$ and give it coordinates $(x,y)\in\R^2$.
 Then $\La\in\mathrm{Aut}( S^1\times\R)$ and $\La$ continuous
 means that $\La$ descends from a linear invertible map from $\R^2$ to
 itself, which we will also denote $\La\in\mathrm{GL}_2(\R)$.  Write $\La = (\La_{ij})$ for 
 $\La_{ij}\in\R$ and $i,j\in\{1,2\}$. The automorphism $\La$ sends the identity to
 itself so
 $
  \La\vect{n}{0}\in\Z\times\{0\}
 $
 for all choices of $n\in\Z$.  This implies that $\La_{11} \in\Z$ and $\La_{21}=0$.
 Since $\La$ is invertible and $\La^{-1}\in\mathrm{Aut}( S^1\times\R)$ we
 see that $(\La_{11})^{-1}\in\Z$ and so $\La_{11} = \pm 1$.  Since
 $\La$ is invertible and upper triangular we know that $\La_{22}\neq 0$.

 For a function $f\colon M_i\to\R$ let $\mathcal{X}_f$ denote the associated 
 Hamiltonian vector field on $M_i$, $i=1,2$.  Also, for
 $v\in\mathfrak{g}=\mathrm{Lie}(S^1\times\R)$, thought of as
 the tangent space to the identity, let $v_{M_i}$ denote
 the vector field on $M_i$ generated by $v$ by the group action.
 Endow $\mathfrak{g}$ with the coordinates $(\al,\be)$ so that the exponential
 map will send $( \al, \be )\in\mathfrak{g}$ to
 $(\al, \be) \in \R/\Z\times\R$.
 Now notice that $\mathcal{X}_{J_1}=(1,0)_{M_1}$ and $\mathcal{X}_{H_1}=(0,1 )_{M_1}$.
 
 For $m_i\in M_i$, $i=1,2$, such that $\rho(m_1)=m_2$ we have
 \[
  \rho_* \mathcal{X}_{J_1}(m_2) = \frac{\mathrm{d}}{\mathrm{d}t}\bigg|_{t=0}\Big( \rho\big(\phi( (t, 0), m_1)\big) \Big)
                      = \frac{\mathrm{d}}{\mathrm{d}t}\bigg|_{t=0}\Big( \phi( \La[(t, 0)], m_2) \Big)
                      = \big(\mathrm{T}\La ( 1,0)\big)_{M_2} (m_2)
 \]
 Notice that $\mathrm{T}_{( 1, 0 )} = ( \La_{11}, 0 ) \in \mathfrak{g}$.
 Then
 $
  \rho_* \mathcal{X}_{J_1} = \big(\mathrm{T}\La( 1, 0 )\big)_{M_2} = \La_{11} ( 1 , 0 )_{M_2} = \La_{11}\mathcal{X}_{J_2}.
 $
 Similarly we see that
 $
  \rho_* \mathcal{X}_{H_1} = \La_{12}\mathcal{X}_{J_2} + \La_{22} \mathcal{X}_{H_2}.
 $
 By Lemma~\ref{lem_hamvectfield} this implies that
 $$
  \rho^* J_2 = \frac{1}{\La_{11}} J_1+c_J\qquad \text{and}\qquad\rho^* H_2 = \frac{-\La_{12}}{\La_{11}\La_{22}} J_1 + \frac{1}{\La_{22}} H_1 + c_H$$
 for some $c_J, c_H\in\R$.
 Recalling that $\La_{11}\in\{\pm 1\}$ and $\La_{11}, \La_{22}\neq 0$ take
 $e=(\La_{11})^{-1}$, $a = \frac{-\La_{12}}{\La_{11}\La_{22}}$, and $b = (\La_{22})^{-1}$
 to complete the proof.
 \end{proof}
 
\begin{prop}\label{prop_stpack}
 The semitoric packing capacity, $\semitoricpack$, is a 
 symplectic $( S^1\times\R)$\--capacity on $\sympST$.
\end{prop}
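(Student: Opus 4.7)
The plan is to verify the three defining properties of a symplectic $(S^1\times\R)$\--capacity --- monotonicity, conformality, and tameness by both $\mathrm{B}^4$ and $\mathrm{Z}^4$ (Definition~\ref{def_RTcapacity}). Conformality follows verbatim from the rescaling argument in Proposition~\ref{prop_toricpack}: given a semitoric ball packing of $(M,\om,\phi)$ arising from semitoric embeddings $\sigma_\al\colon\mathrm{B}^4(r_\al)\to M$, the rescaled maps $(\sigma_\al)_\la(z):=\sigma_\al(z/|\la|)$ yield a semitoric ball packing of $(M,\la\om,\phi)$ with radii $|\la|r_\al$, and the $\ell^4$\--norm of the radius vector scales accordingly, giving $|\la|$\--conformality of $\semitoricpack$.

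For monotonicity, let $\rho\colon M_1\immTR M_2$ be a morphism in $\sympST$ and let $P = \bigsqcup_\al B_\al$ be a semitoric ball packing of $M_1$, each ball arising from a $\T^2$\--equivariant symplectic embedding $\sigma_\al\colon\mathrm{B}^4(r_\al)\to M_1^{\vec{\ep}_1}$. Since the induced $\T^2$\--action on $M_1^{\vec{\ep}_1}$ descends through the quotient $S^1\times\R\to\T^2$ to the Hamiltonian flow $(S^1\times\R)$\--action on $M_1$, each $\sigma_\al$ is automatically $(S^1\times\R)$\--equivariant, and consequently $\rho\circ\sigma_\al$ is an $(S^1\times\R)$\--equivariant symplectic embedding of $\mathrm{B}^4(r_\al)$ into $M_2$. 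The main content is then to show $\rho\circ\sigma_\al$ refines to a semitoric embedding into $M_2$. Two things must be checked: first, that the image lies in $M_2^{\vec{\ep}_2}$ for a common choice of $\vec{\ep}_2\in\{\pm1\}^{\mf}$, which one establishes using Lemma~\ref{lem_equivsymplecto} (the image of $F_{M_2}\circ\rho\circ\sigma_\al$ is an affine image of a standard simplex, which can only meet a focus-focus critical value of $M_2$ on its boundary by a topological argument ruling out a $\T^2$ embedding into a pinched torus, leaving enough freedom to point each cut ray away from the triangular image); and second, that the composition can be made $\T^2$\--equivariant with respect to the induced action on $M_2^{\vec{\ep}_2}$, which uses the $\mathcal{G}$\--freedom in Corollary~\ref{cor_mepsilon} together with the $\mathrm{GL}_2(\Z)$ freedom in the $\T^2$\--automorphism $\La$ from Section~\ref{sec_subsymplGcap}. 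Then $\rho(P)$ is a semitoric ball packing of $M_2$ of the same volume as $P$, so $\semitoricpack(M_1)\leq\semitoricpack(M_2)$.

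For tameness, observe that both $\mathrm{B}^4$ and $\mathrm{Z}^4$ have exactly one $(S^1\times\R)$\--fixed point, namely the origin. If $M\in\sympST$ admits a symplectic $(S^1\times\R)$\--embedding into $\mathrm{B}^4$ or $\mathrm{Z}^4$, injectivity forces $M$ to have at most one $(S^1\times\R)$\--fixed point, hence at most one elliptic-elliptic critical point of its momentum map, which bounds the number of balls in any semitoric packing of $M$ by one. The radius of that single ball is in turn bounded by $\cB{2}{2}(\mathrm{B}^4)=\cB{2}{2}(\mathrm{Z}^4)=1$ (lifting $S^1\times\R$ to $\R^2$ and applying Proposition~\ref{propintro_equivgromov}), yielding $\semitoricpack(M)\leq 1$. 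Conversely, an $(S^1\times\R)$\--embedding $\mathrm{B}^4\immTR P$ is itself a semitoric embedding by the same argument as in monotonicity, giving a one-ball semitoric packing of $P$ of volume $\mathrm{vol}(\mathrm{B}^4)$ and hence $\semitoricpack(P)\geq 1$; the $\mathrm{Z}^4$ case follows by composition with $\mathrm{B}^4\subset\mathrm{Z}^4$. Thus $\semitoricpack$ is tamed by both $\mathrm{B}^4$ and $\mathrm{Z}^4$ with $a=1$.

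The main obstacle is the structural step shared by monotonicity and the lower-bound direction of tameness: verifying that an $(S^1\times\R)$\--equivariant symplectic embedding of a ball into a semitoric manifold $M$ can always be refined to a $\T^2$\--equivariant embedding into some $M^{\vec{\ep}}$. Because the induced $\T^2$\--action on $M^{\vec{\ep}}$ is only defined after removing cuts and only unique up to $\mathcal{G}$, one must make a consistent choice of $\vec{\ep}$ and of automorphism $\La$, and this is precisely where Lemma~\ref{lem_equivsymplecto} and Corollary~\ref{cor_mepsilon} provide the necessary flexibility.
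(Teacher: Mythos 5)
Your proposal tracks the paper's structure — conformality by rescaling, monotonicity via Lemma~\ref{lem_equivsymplecto}, tameness by fixed-point counting as in the toric case — but the key step in your monotonicity argument rests on an incorrect claim. You assert that the induced $\T^2$\--action on $M_1^{\vec{\ep}_1}$ \emph{descends through the quotient} $S^1\times\R\to\T^2$ to the Hamiltonian flow $(S^1\times\R)$\--action on $M_1$, and hence that each $\sigma_\al$ is ``automatically $(S^1\times\R)$\--equivariant.'' This is false. The induced $\T^2$\--action on $M^{\vec{\ep}}$ is the Hamiltonian flow of the \emph{straightened} momentum map $\widetilde{F}=f\circ F$, where $f$ is piecewise affine in the action-angle structure; its second generator is $\mathcal{X}_{\widetilde{H}}=f^{(2)}_J\mathcal{X}_J+f^{(2)}_H\mathcal{X}_H$, not $\mathcal{X}_H$. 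The two actions trace out the same orbit foliation but are parametrized differently, and they are emphatically not related by the covering map $S^1\times\R\to\T^2$. Moreover, a semitoric embedding $\sigma_\al$ is only required to be $\T^2$\--equivariant with respect to \emph{some} $\La\in\mathrm{Aut}(\T^2)\cong\mathrm{GL}_2(\Z)$, which need not even be upper triangular, so $\sigma_\al$ need not respect the $J$\--direction, let alone intertwine the $(S^1\times\R)$\--actions. Your subsequent ``topological argument ruling out a $\T^2$\--embedding into a pinched torus'' to locate the image inside some $M_2^{\vec{\ep}_2}$ is too vague to assess.

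The paper's proof of monotonicity is shorter and avoids this detour entirely: it applies Lemma~\ref{lem_equivsymplecto} directly to $\rho\colon M_1\immTR M_2$ (not to the composition $\rho\circ\sigma_\al$) to conclude $\rho^*F_2 = A\circ F_1$ with $A$ affine of the form $(J,H)\mapsto(eJ+c_J,aJ+bH+c_H)$, $e=\pm1$, $b\neq0$. Because $A$ respects the $J$\--direction, $\rho$ carries the action-angle structure of $M_1$ to that of $M_2$ (mapping cuts to cuts, so that $M_1^{\vec{\ep}_1}$ lands in a suitable $M_2^{\vec{\ep}_2}$ and the induced $\T^2$\--actions intertwine). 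Hence any semitoric embedding $\sigma_\al\colon\mathrm{B}^4(r_\al)\to M_1$, being precisely a map respecting action-angle coordinates, composes with $\rho$ to give a semitoric embedding into $M_2$, with no need to pass through $(S^1\times\R)$\--equivariance of $\sigma_\al$ and then ``refine.'' Your instinct to invoke Lemma~\ref{lem_equivsymplecto} is the right one; the fix is to apply it to $\rho$ and argue that $\rho$ preserves the action-angle structure, rather than trying to promote each $\sigma_\al$ to an $(S^1\times\R)$\--map.
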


 \begin{proof}
  The proof that $\semitoricpack$ is conformal and non-trivial
  is analogous to the proof of Proposition~\ref{prop_toricpack},
  so we must only show that $\semitoricpack$ is monotonic. 
  Let $(M_i, \om_i, F_i)$ be semitoric for $i=1,2$
  and suppose $\phi\colon  M_1\immTR M_2$
  is a symplectic $(S^1\times\R)$\--embedding.
  Recall that action-angle coordinates are local Darboux charts
  in which the flow of the Hamiltonian vector fields are linear.
  Since $\phi$ is symplectic, $(S^1\times\R)$\--equivariant, and
  $\phi^*(F_2) = A\circ F_1$ where $A\colon \R^2\to\R^2$ is 
  affine (Lemma \ref{lem_equivsymplecto}),
  this means that $\phi$ sends action-angle coordinates to action-angle coordinates.
  Since semitoric embeddings are those which respect the action-angle coordinates,
  given any semitoric embedding $\rho\colon \mathrm{B}^{2n}(r)\imm M_1$ the map
  $\phi\circ\rho\colon \mathrm{B}^{2n}(r)\imm M_2$ is a semitoric embedding.
  It follows that $\semitoricpack(M_1)\leq\semitoricpack(M_2)$.
 \end{proof}

 Proposition~\ref{propintro_packingcaps} follows from
 Propositions~\ref{prop_toricpack} and~\ref{prop_stpack}.

\section{Continuity of symplectic \tex{$\T^n$}\--capacities}\label{sec_toricpackcont}  
 
  In this section we study the continuity of the symplectic $\T^n$\--capacity
  constructed in Section~\ref{sec_Tncap}.
  We will outline the procedure used in~\cite{PePRS2013} to construct 
  a natural metric on the moduli space of toric manifolds.
  Since $\Psi\colon  \mhamT\to\P_\toric$  is a bijection we can define a metric space
  structure on $\mhamT$ by defining a metric on $\P_\toric$ and pulling 
  it back via $\Psi$.  A natural metric on $\P_\toric$ is
  given by the volume of the symmetric difference.  For $A, B \subset \R^n$ let 
  $
   A \symdiff B = (A \setminus B) \cup (B \setminus A)
  $
  denote the symmetric difference and let $\la$ denote the
  Lebesgue measure on $\R^n$. For $\De_1, \De_2 \in \P_\toric$ define 
  $d_\P (\De_1, \De_2) = \la (\De_1 \symdiff \De_2)$.
  Now let $d_\toric = \Psi^* d_\P$. In~\cite{PePRS2013}
  the authors show that $(\mhamT, d_\toric)$ is a non-locally compact 
  non-complete metric space.

 The map 
 \[\mhamT\to\msympT\]
 given by $[(M,\om,\phi,\mu)]\mapsto[(M,\om,\phi)]$
 is a quotient map and thus we can endow $\msympT$ with
 the quotient topology.
 Since $\msympT$ is a quotient of $\sympT$ we can pull the topology up
 from $\msympT$ to $\sympT$ by declaring that a set in $\sympT$ is open
 if and only if it is the preimage of an open set from $\msympT$ under
 the natural projection.
 Two points in $\sympT$ are not separable if and only if
 they are $\T^n$\--equivariantly symplectomorphic.
 Thus a map $c\colon \sympT\to [0,\infty]$ which descends to a well-defined
 map $\phi$ on $\msympT$ is continuous if and only if the map
 \[\hat{c}\colon \mhamT\to [0,\infty]\]
 is continuous, where $\hat{c}$ is
 defined by the following commutative diagram:

 \begin{equation*}
   \begin{tikzcd}
   \hamT \ar{d}{}  \ar{r}{}           &  \sympT   \ar{d}{} \ar{r}{c}      & {[0,\infty]} \\
   \mhamT \ar{r}{} \ar[bend right=55, swap]{rru}{\hat{c}}          &  \msympT \ar{ur}{\phi} &   \\
   \end{tikzcd}
  \end{equation*}
 
 Next we define an operation on Delzant polytopes.  Let $n\in\Z_{>0}$.
 For $x_0\in\R^n$, $w_1, \ldots,  w_n\in\Z^n$, and $\varep>0$ define 
 \begin{equation}\label{eqn_halfplane}
  \mathcal{H}_{x_0}^\varep (w_1,\ldots,w_n) = \{\, x_0+\textstyle\sum\nolimits_{j} t_j w_j \mid t_1, \ldots, t_n\in\R_{\geq0}, \sum_j t_j\geq \varep\,\}.
 \end{equation}
 
 Suppose that $\De \in \P_\toric$ and $x_0\in\R^n$
 is a vertex of $\De$.  
 Let $u_i\in\Z^n$, $i=1,\ldots,n$, denote the primitive vectors along which
 the edges adjacent to $x_0$ are aligned.
 The \emph{$\varep$\--corner chop of $\De$ at $x_0$} is the 
 polygon $\De_{x_0}^\varep \in \P_\toric$ given
 by 
 $
  \De_{x_0}^\varep = \De \cap \mathcal{H}_{x_0}^{\varep} (u_1, \ldots, u_n)
 $
 where $\varep$ is
 sufficiently small so that $\De_{x_0}^\varep$ has exactly one more face than $\De$
 does as is shown in Figure~\ref{fig_epsilonchop}. 
 \begin{figure}[ht]
  \centering
  \includegraphics[height=50pt]{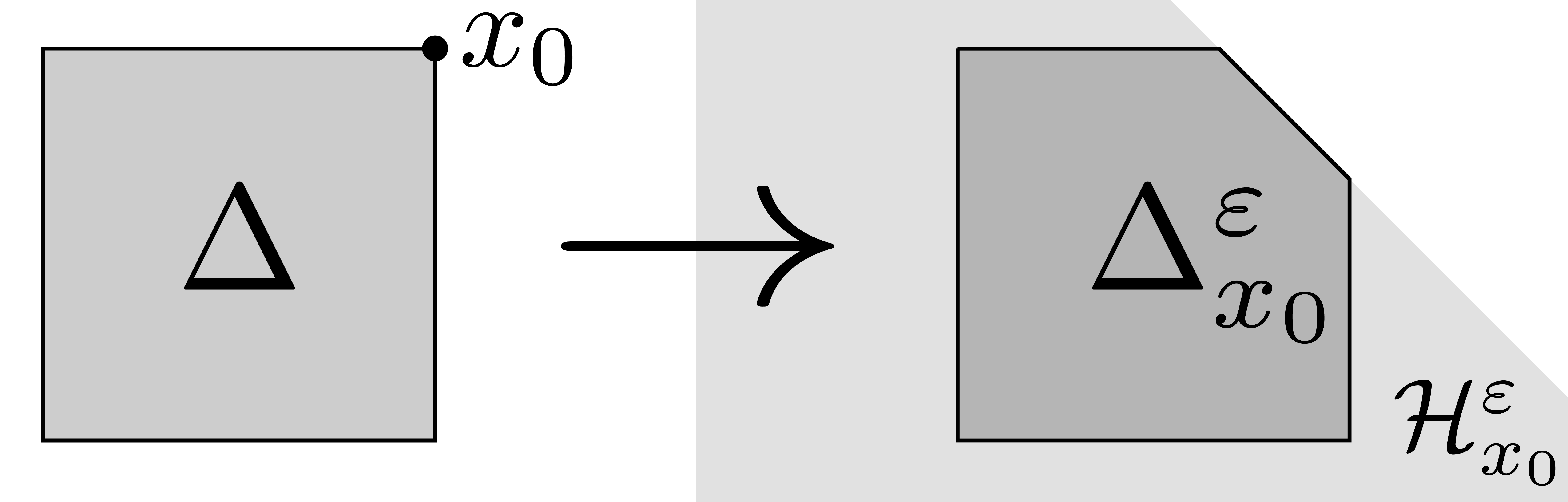}
  \caption{An $\varep$\--corner chop at a vertex
   $x_0$ of $\De$ for some $\varep>0$.}
  \label{fig_epsilonchop}
 \end{figure}
 One can check that if $\De\in\P_\toric$ then $\De_{x_0}^\varep\in\P_\toric$.
 Notice that $\lim_{\varep \to 0} d_\P (\De, \De_{x_0}^\varep) = 0$. This
 means that given any element of $\P_\toric$ with $N$ vertices, corner chopping
 can be used to produce other polygons which are close in $d_\P$ and all 
 polygons produced in this way will have more than $N$ vertices.
 Let $\P_\toric^N$ denote the set of Delzant polygons in $\R^n$ with exactly $N$ 
 vertices.  We will later need the following.
   
 \begin{prop}[\cite{FiPe2014}]\label{prop_simpleobs}
  Let $N\in\Z_{>0}$ and $\De\in\P_\toric^N$.  Any sufficiently small neighborhood 
  of $\De$ is a subset of $\cup_{(N'\geq N)} \P_\toric^{N'}$.
 \end{prop}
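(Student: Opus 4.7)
The plan is to localize the problem near each vertex of $\De$.  Let $v_1, \ldots, v_N$ denote the vertices of $\De$ and choose pairwise disjoint open balls $U_i$ centered at $v_i$ of radius $r > 0$, small enough that $\De \cap U_i = C_i \cap U_i$, where $C_i$ is the tangent cone of $\De$ at $v_i$.  Since $\De$ is Delzant, each $C_i$ is a proper simplicial cone with apex $v_i$ and exactly $n$ facets whose primitive inward normals form a $\Z$\--basis of $\Z^n$; in particular $C_i$ is not a half-space, and $v_i$ is a genuine $0$\--dimensional apex.

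The heart of the argument is the following local claim: for each $i$, there exists $c_i > 0$ such that any convex polytope $K \subset \R^n$ with $\la\big((K \cap U_i) \symdiff (C_i \cap U_i)\big) < c_i$ has at least one vertex in $U_i$.  Granting this, I set $\varep := \min_i c_i$.  For any $\De' \in \P_\toric$ with $d_\P(\De, \De') < \varep$, restricting the symmetric difference to each $U_i$ yields $\la\big((\De' \cap U_i) \symdiff (\De \cap U_i)\big) < c_i$, so the local claim produces a vertex of $\De'$ in each of the $N$ pairwise disjoint balls $U_i$; hence $\De'$ has at least $N$ vertices and lies in $\bigcup_{N' \geq N} \P_\toric^{N'}$.

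To prove the local claim, suppose $K \subset \R^n$ is a convex polytope with no vertex in $U_i$.  Then one of three cases holds: (i) $U_i \subset \mathrm{int}(K)$; (ii) $U_i \cap K = \emptyset$; or (iii) $\partial K \cap U_i \neq \emptyset$ and every boundary point of $K$ inside $U_i$ lies on a face of $K$ of dimension at least one, so that every vertex of $K$ lies outside $U_i$ at distance $\geq r$ from $v_i$.  In case (i) the symmetric difference with $C_i \cap U_i$ contains $U_i \setminus C_i$, of positive volume $(1-\al_i)\la(U_i)$, where $\al_i \in (0, 1/2)$ is the solid-angle fraction of the proper cone $C_i$; in case (ii) it equals $C_i \cap U_i$, of volume $\al_i \la(U_i) > 0$.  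Case (iii) is handled by a compactness argument: if the claim failed for sets in class (iii), a sequence $K_j$ of such polytopes with symmetric difference tending to $0$ would, after passing to a Hausdorff-convergent subsequence on $\overline{U_i}$, yield a limit whose restriction to $\overline{U_i}$ equals $\overline{C_i \cap U_i}$; but the uniform lower bound on the distance from the vertices of the $K_j$ to $v_i$ prevents the limit from developing a pointed apex at the interior point $v_i \in U_i$, contradicting the presence of such an apex in $\overline{C_i \cap U_i}$.  Taking $c_i$ as the minimum of the three resulting lower bounds completes the local claim.

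The main obstacle is case (iii): ruling out approximations of the pointed cone $C_i \cap U_i$ by convex polytopes whose vertices all lie outside $U_i$.  The guiding geometric fact is that at any boundary point $p$ of $K$ lying on a face of $K$ of dimension at least one, the tangent cone $T_p K$ is translation-invariant along that face; in case (iii) every point of $\partial K \cap U_i$ has this property, and such translation-invariance is incompatible with the pointed apex structure of $C_i$ at $v_i$ in any Hausdorff-limit taken as the vertices of $K_j$ stay at distance $\geq r$ from $v_i$.  Making this incompatibility quantitative is the technical core of the proof.
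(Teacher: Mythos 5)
The paper itself does not prove this proposition: it is cited from \cite{FiPe2014}, and the only related argument in the present paper (in the proof of Theorem~\ref{thm_toricpolycont}\eqref{part_polythm2}) uses the Delzant condition in an essential way, encoding each vertex by an $\mathrm{SL}_n(\Z)$ matrix and arguing via column normalizations that only finitely many integer matrices can give edge geometry within a given tolerance, so that the set of possible local vertex neighborhoods is discrete. Your route is genuinely different: you argue by pure convex geometry, with no integrality, that any convex polytope close in symmetric difference to a pointed cone must have a vertex nearby. This would actually prove a stronger statement (applying to all convex polytopes, not just Delzant ones), at the cost of a harder geometric lemma that the paper's integer-lattice argument sidesteps.

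However, there is a real gap, and you flag it yourself: in case (iii) the assertion that ``the uniform lower bound on the distance from the vertices of the $K_j$ to $v_i$ prevents the limit from developing a pointed apex'' is stated, not proved, and the closing sentence (``making this incompatibility quantitative is the technical core'') misdiagnoses what is missing. The compactness-by-contradiction scheme does not require quantitative bounds --- that is precisely what compactness buys --- but the qualitative incompatibility itself needs an argument, and it is not obvious. One way to close the gap: fix $u$ in the interior of the normal cone of $C_i$ at $v_i$, so $\{v_i\}$ is the exposed face of $\overline{C_i\cap U_i}$ in direction $u$. Upper semi-continuity of the argmax under Hausdorff convergence forces the exposed face of $K_j\cap\overline{U_i}$ in direction $u$ into an arbitrarily small ball $B(v_i,\de)\subset U_i$ for $j$ large; since that maximum is then attained strictly inside $U_i$, convexity shows it is also the maximum over all of $K_j$, so the exposed face $F$ of $K_j$ in direction $u$ meets $B(v_i,\de)$. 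But $F$ has no vertices in $U_i$ (so $\dim F\geq 1$) and all its vertices lie at distance $\geq r$ from $v_i$, so $F$ contains a segment from $B(v_i,\de)$ out to $\partial U_i$, which cannot lie inside $B(v_i,\de)$ --- contradiction. Without such an argument, the key step in your case (iii) remains an assertion rather than a proof.
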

 
 \begin{figure}[ht]
  \centering
  \includegraphics[height=60pt]{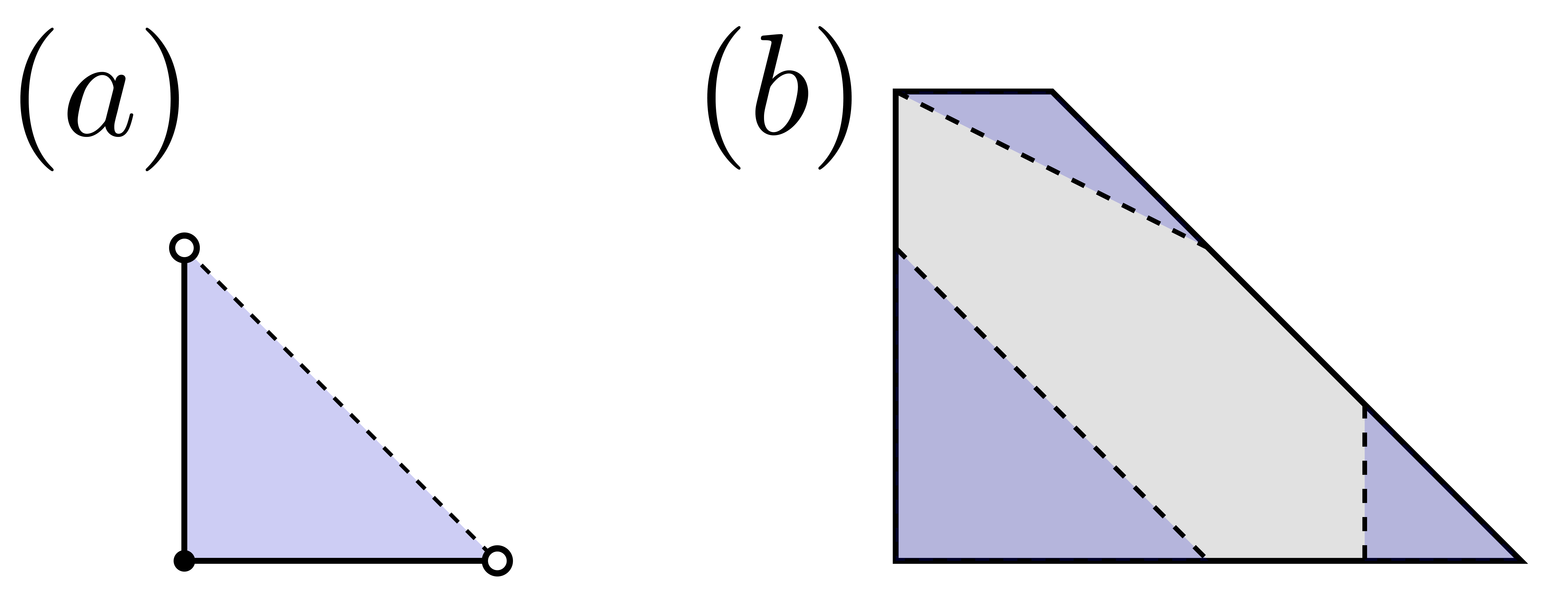}
  \caption{(a) An image of $\De(1)\subset\R^2$.
   (b) An image of an admissible, but not maximal, packing.}
  \label{fig_admpacking}
 \end{figure}
 
 We study ball packing problems about symplectic toric manifolds by instead
 studying packings of the associated Delzant polygon.
 Let $\De\in\P_\toric$ be a Delzant polytope. Let 
 $\mathrm{AGL}_n(\Z) = \mathrm{GL}_n (\Z) \ltimes \R^n$ denote the group
 of affine transformations in $\R^n$ with linear part in 
 $\mathrm{GL}_n(\Z)$. For $r>0$ let 
 $\De(r) = \mathrm{Conv}\{\,re_1, \ldots, re_n, 0\,\}\setminus 
 \mathrm{Conv}\{\,re_1, \ldots, re_n\,\}$
 where $\mathrm{Conv}(E)$ denotes the convex hull of the
 set $E\subset\R^n$ and $\{\,e_1, \ldots, e_n\,\}$ denote the
 standard basis vectors in $\R^n$.
 Following~\cite{Pe2006}, a subset $\Si$ of $\De$ is an \emph{admissible simplex of radius
 $r>0$ with center at a vertex $x_0$ of $\De$} if there exists some 
 $A\in\mathrm{AGL}_n(\Z)$ such that:\\
 (1) $A(\De(r^{\nicefrac{1}{2}})) = \Si$;\\ (2) $A(0) = x_0$; \\
 (3) $A$ takes the edges of $\De(r^{\nicefrac{1}{2}})$ meeting at the origin
 to the edges of $\De$ meeting at $x_0$.
 
 An \emph{admissible packing of $\De$} is a disjoint union
 $R = \bigsqcup_{\al\in \mathcal{A}} \Si_\al\subset\De$
 where each $\Si_\al$ is an admissible simplex for $\De$. 
 This is illustrated in Figure~\ref{fig_admpacking}.
 The half-plane $\mathcal{H}_{x_0}^\varep$ given
 in Equation \eqref{eqn_halfplane}
 is designed so that that an $\varep$\--corner chop on a Delzant
 polytope corresponds to the removal of an admissible
 simplex of radius $\varep$.
 
 The function $\Om\colon  \msympT \to (0,1]$ given by 
 \[ \Om (M) = \frac{\sup\{\,\vol(P) \mid P \textrm{ is a toric ball packing of }M\,\}}{\vol(M)},\]
 known as the \emph{optimal toric density function}, has been
 studied in~\cite{FiPe2014, Pe2006, PeSc2008}.
 In particular, in~\cite{FiPe2014} the first 
 and third authors of the present article studied the regions of continuity
 of $\Om$ and proved the $n=2$ case of Theorem~\ref{thmintro} part~\ref{thmintro_part_toric}.
 They stated the theorem in terms of $\Om$, while we state it
 in terms of $\toricpack$. 
 
 Let $\mathrm{vol}\colon \sympT \to \R$ denote the total symplectic 
 volume of a symplectic toric manifold and let 
 $\mathrm{vol}_\P \colon  \P_\toric \to \R$ denote 
 Euclidean volume function of a polytope in $\R^n$.
 Let $(\mathrm{B}^{2n}(r), \om_\mathrm{B}, \phi_\mathrm{B}, \mu_\mathrm{B})\in\hamT$ denote the standard
 ball of radius $r>0$ in $\C^n$ with the standard action of $\T^n$ and
 suppose that $(M,\om,\phi, \mu)\in\hamT$. Let 
 $\De_\mathrm{B} = \mu_\mathrm{B} (\mathrm{B}^{2n}(r))$ and $\De = \mu (M)$.
 Then, as shown in~\cite{Pe2006}, $\mathrm{vol} (M) = n! \pi^n \mathrm{vol}_\P (\De)$ and
 if $f\colon \mathrm{B}^{2n}(r) \immtn M$ is a 
 symplectic $\T^n$\--embedding then
 \[
  \mathrm{vol}(\mathrm{B}^{2n}(r)) = \mathrm{vol}(f(\mathrm{B}^{2n}(r))) = n!\pi^n \mathrm{vol}_\P (\mu\circ f(\mathrm{B}^{2n}(r))) = n!\pi^n \mathrm{vol}_\P (\De_\mathrm{B}).
 \]

 \begin{theorem}[\cite{Pe2006}]\label{thm_toricgeomandcombo}
  Let $(M,\om,\phi,\mu)\in \hamT$ and let $\De = \mu(M)$.   Suppose $\phi\colon \mathrm{B}^{2n}(r)\imm M$ is a symplectic 
    $\T^n$\--embedding for some $r>0$.  Then $\mu(\phi(\mathrm{B}^{2n}(r)))\subset\De$ is an admissible
    simplex of radius $r^2$.  Conversely, if $\Si\subset\De$ is an admissible
    simplex of radius $r^2$ then there exists a symplectic 
    $\T^n$\--embedding $\phi\colon \mathrm{B}^{2n}(r)\imm M$
    such that $\mu(\phi(\mathrm{B}^{2n}(r))) = \Si$.
   
   Moreover,  if $P$ is a toric ball packing of $M$, then $\mu(P)\subset\De$ is an admissible packing of $\De$. 
   Conversely, if $R$ is an admissible packing of $\De$ then there exists a
    toric ball packing $P$ of $M$ such that 
    $\mu(P) = R$.
 \end{theorem}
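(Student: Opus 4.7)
The plan is to reduce both halves of the theorem to the Delzant local normal form at a fixed point of the $\T^n$-action, together with Delzant's classification theorem for symplectic toric manifolds. The pivotal observation is that $0\in\mathrm{B}^{2n}(r)$ is the unique fixed point of the standard $\T^n$-action on the ball, so any $\T^n$-equivariant embedding of $\mathrm{B}^{2n}(r)$ into $M$ must send $0$ to a fixed point of the $\T^n$-action on $M$, i.e. to the preimage of a vertex of $\De$.

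For the forward direction of the single-ball statement, let $f\colon\mathrm{B}^{2n}(r)\imm M$ be a symplectic $\T^n$-embedding with associated automorphism $\La\in\mathrm{Aut}(\T^n)$, and set $p_0 := f(0)$ and $x_0 := \mu(p_0)$. The equivariance of $f$ together with an argument in the spirit of Lemma~\ref{lem_hamvectfield} shows that $\mu\circ f = \tilde A\circ \mu_{\mathrm{B}}$ for some $\tilde A\in\mathrm{AGL}_n(\Z)$ whose linear part is the integral dual of $\La_*$ and whose translation part carries $0$ to $x_0$. A direct computation (up to the paper's normalization relating the symplectic radius $r$ to the side length of $\De(\,\cdot\,)$) identifies $\mu_{\mathrm{B}}(\mathrm{B}^{2n}(r))$ with the model simplex entering the definition of an admissible simplex of radius $r^2$, so $\mu(f(\mathrm{B}^{2n}(r))) = \tilde A(\mu_{\mathrm{B}}(\mathrm{B}^{2n}(r)))$ is admissible of radius $r^2$ at $x_0$. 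The three defining conditions of admissibility correspond respectively to the translation of $\tilde A$ sending $0$ to $x_0$, to $\tilde A$ carrying the coordinate edges at $0$ to the edges of $\De$ at $x_0$, and to $\tilde A$ being integral.

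For the reverse direction, given an admissible simplex $\Si = A(\De(r))$ at a vertex $x_0$, I would first invoke the Delzant local normal form to produce a $\T^n$-equivariant symplectomorphism between a neighborhood of $p_0 := \mu^{-1}(x_0)$ in $M$ and a neighborhood of $0$ in $\C^n$ (with its standard $\T^n$-action), with moment maps intertwined by $A$. The condition $\Si\subset\De$ that is built into admissibility is exactly what is needed to propagate this identification globally: the preimage $\mu^{-1}(\Si)\subset M$ inherits the structure of a symplectic toric space whose moment polytope is $A^{-1}(\Si)=\De(r)$, and by the uniqueness part of Delzant's classification it is $\T^n$-equivariantly symplectomorphic to $\mathrm{B}^{2n}(r)$. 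Composing with the inclusion yields the required $f$ with $\mu(f(\mathrm{B}^{2n}(r)))=\Si$. The main obstacle of the whole proof is precisely this global extension step, since the Delzant local model only provides the identification in a neighborhood of $p_0$; it is Delzant's classification (applied to the piece $\mu^{-1}(\Si)$ cut out by $\Si$) that promotes this local model to a global equivariant symplectomorphism with the ball.

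The two packing statements follow almost formally from the single-ball case. Given a toric ball packing $P = \bigsqcup_\al B_\al$, each $\mu(B_\al)$ is admissible by the forward direction, and the images are pairwise disjoint because each fibre of $\mu$ is a single $\T^n$-orbit, so two disjoint $\T^n$-invariant open subsets of $M$ necessarily have disjoint moment images. Conversely, given an admissible packing $R = \bigsqcup_\al \Si_\al$, applying the reverse single-ball statement to each $\Si_\al$ produces $\T^n$-invariant balls $B_\al\subset M$ with $\mu(B_\al)=\Si_\al$, and these balls are pairwise disjoint because their moment images are.
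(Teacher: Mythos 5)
Your forward direction and the reduction of the packing statements to the single-ball case are structurally sound and match the machinery the paper deploys for the semitoric analogue (Proposition~\ref{prop_stgeomandcombo}): the equivariance of $f$, via Lemma~\ref{lem_equivtoaffine}, forces $\mu\circ f=\tilde A\circ\mu_{\mathrm{B}}$ with $\tilde A$ affine with integral linear part $(\La^t)^{-1}$, and the three admissibility conditions are read off from $\tilde A$ exactly as you describe. The disjointness argument using the fact that moment fibers of a toric manifold are single $\T^n$-orbits is also correct.

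The gap is in the reverse direction, and it is the step you yourself flag as the crux. You propose to promote the Delzant local normal form near $p_0$ to a global identification of $\mu^{-1}(\Si)$ with $\mathrm{B}^{2n}(r)$ by appealing to ``the uniqueness part of Delzant's classification.'' But Delzant's theorem classifies \emph{compact} connected symplectic toric manifolds by their (compact) Delzant polytopes, and $\mu^{-1}(\Si)$ is a non-compact open $\T^n$-invariant subset of $M$ whose moment image $\Si$ is a half-open simplex, not a Delzant polytope. Delzant's theorem simply does not apply to it, so the invocation does not close the argument. The correct tool is precisely what the paper uses when it proves the semitoric analogue: Karshon--Tolman's result on \emph{proper} Hamiltonian $\T^n$-manifolds that are \emph{centered} about a point (Lemma~\ref{lem_hamTmcenter}). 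One must check that $\bigl(\mu^{-1}(\Si),\om|_{\mu^{-1}(\Si)},\mu|_{\mu^{-1}(\Si)},\Ga\bigr)$ is a proper Hamiltonian $\T^n$-manifold centered about the vertex $x_0$, where $\Ga$ is the open half-space cut out by the top face of the admissible simplex; properness here comes from the compactness of $M$ together with $\mu^{-1}(\Ga\setminus\Si)=\varnothing$. Only then does one obtain the global equivariant symplectomorphism $\mu^{-1}(\Si)\cong\{\,z\in\C^n\mid x_0+\sum_j|z_j|^2\eta_j\in\Ga\,\}=\mathrm{B}^{2n}(r)$. Note also that the paper cites this theorem from the reference [Pe2006] rather than reproving it, so the fair comparison is with Proposition~\ref{prop_stgeomandcombo}, which is structured exactly along the lines just described.
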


Since there is a toric ball
 packing $P$ of $M$ related to an admissible packing
 $R$ of $\De$ by $\mu(P)=R$,
 it follows that
 $
  \mathrm{vol}(P)= n!\pi^n\mathrm{vol}_\P(R).
 $
 To study packing of the manifold we will study packing of the polygon.
   Thus, we define
  $\pack_\toric \colon \P_\toric\to(0, \infty)$ by
  $$
   \pack_\toric (\De) = \sup\{\,\mathrm{vol}_\P(R)\mid R\textrm{ is an admissible packing of $\De$}\,\}.
  $$
  Suppose that $\De \in \P_\toric^N$ with vertices
    $v_1, \ldots, v_N\in\R^n$ and let
    $\pack_\toric^i (\De)$ be the supremum of $\mathrm{vol}_\P(R)$ 
    over all admissible packings $\mathcal R$ of $\De$ in which $v_i\notin \mathcal R$.

The following result generalizes~\cite[Theorem 7.1]{FiPe2014} to the case $n \geq 3$.

 \begin{theorem}\label{thm_toricpolycont}
  Fix $n\in\Z_{>0}$.  For $N\in\Z_{\geq1}$ and let $\P_\toric^N$ denote the set of 
  Delzant polygons in $\R^n$ with exactly $N$ 
  vertices. Then:
  \begin{enumerate}[font=\normalfont]
   \item\label{part_polythm1} $\pack_\toric$ is discontinuous at 
    each point in $\P_\toric$;
   \item\label{part_polythm2} the restriction 
    $\pack_\toric|_{\P_\toric^N}$ is continuous for each $N\geq 1$;
   \item\label{part_polythm3} if $\De \in \P_\toric^N$ then $\P_\toric^N$ is the largest
    neighborhood of $\De$ in $\P_\toric$ in which $\pack_\toric$ is continuous if and only if 
    $\pack_\toric^i (\De) < \pack_\toric (\De)$ 
    for all $1\leq i \leq N$.
  \end{enumerate}
 \end{theorem}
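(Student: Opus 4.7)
The plan is to adapt the strategy of~\cite[Theorem 7.1]{FiPe2014}, which proves the two\--dimensional case, to arbitrary dimension $n$, with part~(2) providing the foundation. For part~(2), convergence in $d_\P$ of polytopes sharing a common vertex count $N$ forces the labeled vertices to converge, and since the primitive edge vectors at each vertex lie in the discrete set $\Z^n$, they are eventually constant. The maximum admissible simplex radius at each vertex is then a continuous function of the local vertex/edge data, and $\pack_\toric(\De)$ is the supremum of $\sum_{i} r_i^{n/2}/n!$ over radius vectors $(r_1,\ldots,r_N) \in \prod_i [0, s_i^{\max}(\De)]$ subject to pairwise disjointness constraints that vary continuously with $\De$. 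A standard parametric\--optimization argument then gives continuity of $\pack_\toric|_{\P_\toric^N}$.

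For part~(1), given $\De \in \P_\toric^N$, I would construct discontinuity by taking $\De_\varep$ to be the polytope obtained from $\De$ by performing $\varep$\--corner chops \emph{simultaneously at all} $N$ vertices. Then $d_\P(\De, \De_\varep) = O(\varep^n) \to 0$, but every vertex of $\De_\varep$ is incident to at least one chop edge of length $O(\varep)$, which forces the maximal admissible simplex at each new vertex to have polygon\--volume $O(\varep^n)$. Summing over the $O(N)$ new vertices yields $\pack_\toric(\De_\varep) = O(\varep^n) \to 0$, whereas $\pack_\toric(\De) > 0$ holds for any Delzant polytope, yielding the required discontinuity.

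For part~(3), in the \emph{if} direction, assume $\pack_\toric^i(\De) < \pack_\toric(\De)$ for all $i$, with positive gap $\delta := \pack_\toric(\De) - \max_i \pack_\toric^i(\De)$. By Proposition~\ref{prop_simpleobs}, any $\De' \in \P_\toric \setminus \P_\toric^N$ sufficiently close to $\De$ has vertex count $N' > N$, with the extra vertices clustered near a nonempty subset $\{v_{i_1}, \ldots, v_{i_m}\}$ of vertices of $\De$ that have effectively been chopped. The estimate from part~(1) then bounds any packing of $\De'$ by $\pack_\toric^{i_1}(\De) + o(1) \leq \pack_\toric(\De) - \delta + o(1)$, uniformly obstructing continuity on any proper extension of $\P_\toric^N$ containing such $\De'$. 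For the \emph{only if} direction (contrapositive), if $\pack_\toric^{i_0}(\De) = \pack_\toric(\De)$, consider the single\--vertex chop $\De_\varep$ at $v_{i_0}$: a near\--optimal packing of $\De$ avoiding $v_{i_0}$ yields a packing of $\De_\varep$ losing only $O(\varep^n)$, while any packing of $\De_\varep$ is bounded above by $\pack_\toric^{i_0}(\De) + O(\varep^n)$, so $\pack_\toric(\De_\varep) \to \pack_\toric(\De)$, showing that $\P_\toric^N \cup \{\De_\varep\}_{\varep > 0 \text{ small}}$ is a strict extension on which $\pack_\toric$ remains continuous at $\De$. The main obstacle will be the \emph{if} direction in dimension $n \geq 3$: one must rigorously characterize all Delzant polytopes near $\De$ with extra vertices as arising from localized chop\--like modifications of $\De$; Proposition~\ref{prop_simpleobs} handles the combinatorial bookkeeping, but the richer higher\--dimensional geometry of possible new\--vertex configurations and the identification of the short incident edges bounding the admissible simplices will require care.
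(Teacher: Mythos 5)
Your high-level plan for parts (1) and (3) matches the paper's: simultaneous $\varep$\--corner chops at all vertices to exhibit discontinuity, and single-vertex chops to characterize the maximal region of continuity. The concern you flag in part (3) for $n\geq3$ — that one needs to know nearby Delzant polytopes with more vertices arise from localized chop-like modifications — is a legitimate point, and indeed the paper also asserts this (citing Proposition~\ref{prop_simpleobs}) without a detailed higher-dimensional justification. Your $O(\varep^n)$ volume estimates in part (1) are a slightly more quantitative version of what the paper does, and are correct since an admissible simplex $A(\De(r^{1/2}))$ is determined by a single radius parameter, so a short incident edge shrinks the whole simplex.

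The genuine gap is in part (2). You write ``since the primitive edge vectors at each vertex lie in the discrete set $\Z^n$, they are eventually constant,'' but this is a non sequitur: a sequence of primitive integer vectors can have converging directions while escaping to infinity (for instance $(j,1)\in\Z^2$ is primitive with directions converging to $(1,0)$). What is missing — and what is precisely the technical core of the paper's proof of part (2) — is a uniform length bound on the edge vectors of nearby Delzant polytopes, which then combines with discreteness of $\Z^n$ to give eventual constancy. The paper obtains this from the Delzant condition: it introduces the column normalization $\eta([v_1,\ldots,v_n])=[v_1/|v_1|,\ldots,v_n/|v_n|]$ and observes that for the matrix $A'=[w_1,\ldots,w_n]\in\mathrm{SL}_n(\Z)$ of edge vectors at a vertex, $1=|w_1|\cdots|w_n|\cdot|\det\eta(A')|$; since $d_\P$\--convergence within $\P_\toric^N$ forces $\det\eta(A')\to\det\eta(A)>0$, one gets $|w_i|\leq(\det\eta(A)-\varep)^{-1}$, so the edge vectors range over a finite subset of $\Z^n$. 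Only after this step are nearby polytopes in $\P_\toric^N$ obtained from $\De$ by parallel face translations, which is what underwrites your parametric-optimization argument for continuity of $\pack_\toric|_{\P_\toric^N}$. As written your part (2) is unsupported, and since parts (1) and (3) both lean on it, the proof does not close.
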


 \begin{proof}
  First we show \eqref{part_polythm1}.  Let $\De\in\P_\toric^N$ and
  for any small enough $\varep>0$ perform an $\varep$\--corner chop 
  (as in Section~\ref{sec_toricpackcont}) at each
  corner to produce $\De_\varep\in\P_\toric^{2N}$.
  Any admissible packing of $\De_\varep$ can have at most $2N$ simplices 
  and each simplex must have one side with length at most $\varep$ while 
  the other sides are universally bounded by the maximal side length 
  of $\De$.  The size of such simplices decreases to zero as $\varep$ does, so
  $
   \lim_{\varep\to0}\pack_\toric (\De_\varep) =0.
  $
Hence
  $$
   \lim_{\varep\to0}d_\P (\De, \De_\varep) = 0
  $$
  but 
  $$
   \lim_{\varep\to 0}\abs{\pack_\toric (\De) - \pack_\toric(\De_\varep)} = \pack_\toric(\De)>0,
 $$
 so $\pack_\toric$ is discontinuous at $\De$.
  
  Now we prepare to show part \eqref{part_polythm2}.
  For any $v_1, \ldots, v_n \in \Z^n$ let $[v_1, \ldots, v_n]$
  denote the $n\times n$ integer matrix with $i^{\textrm{th}}$ column given by $v_i$ 
  for $i=1, \ldots, n$.  Let $\eta\colon \mathrm{SL}_n(\Z)\to\mathrm{GL}_n(\R)$ given by 
  $$
   \eta([v_1, \ldots, v_n]) = \left[ \frac{v_1}{\abs{v_1}}, \ldots, \frac{v_n}{\abs{v_n}}\right] 
  $$
  take a nonsingular integer matrix to its \emph{column normalization}.
  Notice for any $A = [v_1, \ldots, v_n]\in\mathrm{SL}_n(\Z)$ that
  $$
   \det(A) = \abs{v_1}\cdots\abs{v_n}\cdot\det(\eta(A)).
  $$
  Suppose $\De\in\P_\toric$ is $n$\--dimensional.  In a neighborhood 
  around each vertex the polytope is described by a collection of vectors 
  $v_1, \ldots, v_n\in\Z^n$ with $\det(v_1, \ldots, v_n)=1$ along which the edges adjacent to this vertex 
  are directed. So, associated to any vertex of a Delzant 
  polytope, there is a matrix $A\in\mathrm{SL}_n(\Z)$ given by
  $
   A = [v_1, \ldots, v_n]
  $
  which is unique up to even permutations of its columns
  and thus, though $A$ is not 
  unique, the values determined by $\det(A)$ and $\det(\eta(A))$
  associated to a vertex are well-defined.
  Fix $\De\in\P_\toric^N$ and $\{\,\De_j\,\}_{j=1}^\infty \subset \P_\toric^N$ such that
  \begin{equation}\label{eqn_dPlimit}
   \lim_{j\to\infty} d_\P (\De, \De_j) = 0.
  \end{equation}
  For $j$ large enough for each vertex $V$ of $\De$ there must be a 
  corresponding vertex $V_j$ of $\De_j$ so that  $V_j\to V$
  as $j\to \infty$.  Let $A\in\mathrm{SL}_n(\Z)$ be a matrix corresponding to $V$ and let 
  $A_j\in\mathrm{SL}_n(\Z)$ be a matrix corresponding to $V_j$ for $j\in\Z$ 
  large enough.  In particular, convergence in $d_\P$, which 
  is convergence in $\mathrm{L}^1(\R^n)$, implies that locally these vertices must converge, so
  Equation~\eqref{eqn_dPlimit} implies that
  $$
   \lim_{j\to\infty} \abs{\det(\eta(A)) - \det(\eta(A_j))}=0.
  $$

  Now we are ready to prove \eqref{part_polythm2} by showing that 
  the collection of possible vertices of Delzant polytopes is discrete.
  Fix $\De\in\P_\toric^N$  with a vertex $V$ at the origin and let $\varep>0$.
  Choose $\de>0$ small enough so that if $\De'\in\P_\toric^N$ with
  a vertex $V'$ at the origin then $d_\P(\De, \De')<\de$ implies that 
  \begin{equation}\label{eqn_etaA}
   \abs{\det(\eta(A))-\det(\eta(A'))}<\varep,
  \end{equation}
  where $A\in\mathrm{SL}_n(\Z)$ is a matrix associated to $V$ and
  $A'\in\mathrm{SL}_n(\Z)$ is a matrix associated to $V'$.
  Suppose that $\varep < \det(\eta(A))$.  Now let 
  $A' = [w_1, \ldots, w_n]$ for $w_i\in\Z^n$, $i = 1, \ldots, n$.
  These are all nonzero integer vectors so $\abs{w_i}\geq 1$ 
  for $i = 1, \ldots, n$.  For each $i$ we have
  \[
   1 = \det(A') = \abs{w_1}\abs{w_2}\ldots\abs{w_n}\det(\eta(A')) \geq \abs{w_i} \det(\eta(A'))
  \]
  and so by Equation \eqref{eqn_etaA}
  $$
   \abs{w_i} \leq \frac{1}{\det(\eta(A'))} \leq \frac{1}{\det(\eta(A))-\varep}.
  $$
  Thus each $w_i\in\Z^n$ has length at most 
  $(\det(\eta(A))-\varep)^{-1}$,
  a value which does not 
  depend on $\De'$, and so to be within $\de$ of $\De$
  the vectors directing the edges coming out from the vertex $V'$ 
  of $\De'$ must be chosen from only finitely many options.
  This means the set of possible local neighborhoods of vertices is discrete.
  Thus, for small enough $\de>0$ we conclude
  that $d_\P (\De, \De')<\de$ implies that there
  exist open sets $U, U'\subset\R^n$ around the vertices $V$
  and $V'$ such that 
  \[\De\cap U = F_c(\De'\cap U')\]
  where $F_c\colon \R\to\R$ is a translation by some fixed $c\in\R^n$.
    Now, let $\De\in\P_\toric^N$ be any Delzant polytope in $\R^n$ with $N$ vertices.
  In a sufficiently small $d_\P$\--neighborhood 
  of $\De$ all polytopes must have the same angles at the
  finitely many vertices by 
  the argument above.  Thus they are all related to $\De$ by translating 
  its faces in a parallel way, which continuously changes $\pack_\toric$.
  This proves \eqref{part_polythm2} because $\pack_\toric$ is continuous
  on such families.
  
  Finally we show \eqref{part_polythm3}.  Let $\De\in\P_\toric^N$ 
  and assume that $\pack_\toric(\De) = \pack_\toric^i(\De)$
  for some $i\in\{1, \ldots, N\}$.  Then
  there is an optimal packing of $\De$ which avoids the $i^{\textrm{th}}$ 
  vertex.  For $\varep>0$ let $\De_\varep\in\P_\toric^{N+1}$ be the $\varep$\--corner
  chop of $\De$ at the $i^{\textrm{th}}$ vertex.  Since the optimal packing
  of $\De$ avoids the $i^\textrm{th}$ vertex, we see that
  $\lim_{\varep\to0}d_\P (\De, \De_\varep) = 0 $
  and
  $\lim_{\varep\to0}\pack_\toric (\De) = \pack_\toric (\De_\varep)$
  so there is a set larger than $\P_\toric^N$ on which 
  $\pack_\toric$ is continuous around $\De$.
  
  Conversely assume that $\De\in\P_\toric^N$ satisfies
  $\pack_\toric^i(\De)<\pack_\toric (\De)$
  for all $i =1, \ldots, n$.  By Proposition~\ref{prop_simpleobs} we 
  know that any small enough neighborhood of $\De$ only
  includes polytopes with $N$ vertices and polytopes with more than $N$ 
  vertices, which are produced from corner chops of $\De$. We must now only
  show that $\pack_\toric$ cannot be continuous on any neighborhood
  of $\De$ which includes any such polygons.  For $\varep>0$
  let $\De_\varep\in\P_\toric^{N+1}$ be the $\varep$\--corner chop of 
  $\De$ at the $i^\textrm{th}$ vertex.  Then
  $\lim_{\varep\to 0}\pack_\toric (\De_\varep) = \pack_\toric^i (\De) < \pack_\toric$
  so for small enough corner chops $\pack_\toric(\De_\varep)$
  is bounded away from $\pack_\toric(\De)$.
  Thus any set on which $\pack_\toric$ is continuous around $\De$ 
  cannot include any corner chops of $\De$.
  From this we conclude that any such set cannot include polytopes
  with greater than $N$ vertices> The result follows since
  is continuous on all of $\P_\toric^N$.
  \end{proof}

  Theorem~\ref{thmintro} part~\ref{thmintro_part_toric} follows from Theorem~\ref{thm_toricgeomandcombo} and Theorem~\ref{thm_toricpolycont}.
 In addition,  these Theorems also imply the following result.  Let  $N\geq 1$ and let $\sympplain^{2n,\T^n}_{\mathrm{T}, N}$ denote the 
  set of symplectic toric manifolds with exactly $N$ points
  fixed by the $\T^n$\--action. For $(M, \om, \phi)\in\sympplain^{2n,\T^n}_{\mathrm{T}, N}$ with fixed
    points $p_1, \ldots, p_N\in M$ let 
    $$
     \toricpack^i (M) = \left(\frac{\sup\{\,\mathrm{vol}(P)\mid P\textrm{ is a toric ball packing of $M$ such that $p_i\notin P$} \,\}}{\vol (\mathrm{B}^{2n})}\right)^{\frac{1}{2n}}.
    $$

 \begin{prop}\label{prop_toriccontnbhd}
     The space $\sympplain^{2n,\T^n}_{\mathrm{T}, N}$ is the largest
    neighborhood of $M$ in $\sympT$ in which $\toricpack$ is continuous if and
    only if 
    $ \toricpack^i (M) < \toricpack (M)$
    for every $1\leq i \leq N$.
 \end{prop}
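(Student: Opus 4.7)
The plan is to transfer the statement from the manifold side $\sympT$ to the polytope side $\P_\toric$ via the Delzant correspondence, where it becomes part~\eqref{part_polythm3} of Theorem~\ref{thm_toricpolycont} which has already been established. The key point is that the topology on $\sympT$ was defined precisely so that the map $\Psi\colon\mhamT\to\P_\toric$ is a homeomorphism after descending through $\msympT$, so a neighborhood of $M\in\sympT$ corresponds to a $d_\P$-neighborhood of $\De=\mu(M)$.

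First I would set up the dictionary: for $(M,\om,\phi,\mu)\in\hamT$ with $\De=\mu(M)\in\P_\toric^N$, Theorem~\ref{thm_toricgeomandcombo} gives a bijection between toric ball packings $P$ of $M$ and admissible packings $R=\mu(P)$ of $\De$, with $\vol(P)=n!\pi^n\vol_\P(R)$. Moreover fixed points of the $\T^n$\--action correspond precisely to vertices of $\De$, and the $i$\--th fixed point $p_i$ lies in a toric ball $B\subset M$ if and only if the vertex $v_i=\mu(p_i)$ lies in the admissible simplex $\mu(B)$. Consequently, up to the fixed normalization constant $\left(n!\pi^n/\vol(\mathrm{B}^{2n})\right)^{1/2n}$, raising to the $2n$\--th power converts $\toricpack(M)$ into $\pack_\toric(\De)$ and $\toricpack^i(M)$ into $\pack_\toric^i(\De)$. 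In particular, the inequality $\toricpack^i(M)<\toricpack(M)$ is equivalent to $\pack_\toric^i(\De)<\pack_\toric(\De)$ for each $i$.

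Next I would invoke the description of the topology on $\sympT$ given just before Theorem~\ref{thm_toricpolycont}: a map $c\colon\sympT\to[0,\infty]$ descending to $\msympT$ is continuous if and only if the induced $\hat c\colon\mhamT\to[0,\infty]$ is continuous, and $d_\toric=\Psi^* d_\P$. Since $\toricpack$ is invariant under $\T^n$\--equivariant symplectomorphism (hence descends to $\msympT$), its continuity on an open set $U\subset\sympT$ containing $M$ is equivalent to continuity of $\vol_\P$\--normalized $\pack_\toric^{1/2n}$ on the image of $U$ under $\Psi$, which is an open subset of $\P_\toric$ containing $\De$. Under this correspondence $\sympplain^{2n,\T^n}_{\mathrm{T},N}$ maps bijectively onto $\P_\toric^N$, because a symplectic toric manifold has exactly $N$ fixed points if and only if its Delzant polytope has exactly $N$ vertices.

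With the dictionary in place, the claim becomes: $\P_\toric^N$ is the largest $d_\P$\--neighborhood of $\De$ on which $\pack_\toric$ is continuous if and only if $\pack_\toric^i(\De)<\pack_\toric(\De)$ for all $i=1,\ldots,N$. This is exactly part~\eqref{part_polythm3} of Theorem~\ref{thm_toricpolycont}, so the proof concludes by direct appeal. The main (and only) obstacle is making the correspondence between manifold\--level and polytope\--level fixed\--point\--avoiding packings fully rigorous; once one checks that a toric ball contains a fixed point $p_i$ iff the corresponding admissible simplex contains the vertex $v_i$ (which is immediate since $\mu$ is a momentum map and $p_i$ is the unique preimage of $v_i$), the rest is bookkeeping under the Delzant bijection.
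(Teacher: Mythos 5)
Your proposal is correct and matches the paper's (implicit) argument: the paper states that Proposition~\ref{prop_toriccontnbhd} follows from Theorem~\ref{thm_toricgeomandcombo} together with Theorem~\ref{thm_toricpolycont}, and your proof is exactly the transfer of part~\eqref{part_polythm3} of Theorem~\ref{thm_toricpolycont} across the Delzant correspondence via the packing dictionary of Theorem~\ref{thm_toricgeomandcombo}. Your verification that $p_i\in P$ iff $v_i\in\mu(P)$ (since $\mu^{-1}(v_i)=\{p_i\}$ and the only fixed point in a $\T^n$-equivariantly embedded ball is its center) is precisely the bookkeeping the paper leaves to the reader.
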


Theorem~\ref{thmintro} part~\ref{thmintro_part_toric} and Proposition~\ref{prop_toriccontnbhd} are
illustrated in Figure~\ref{fig_toricfamilies}.
If $n=2$ Proposition~\ref{prop_toriccontnbhd} was proved in~\cite{FiPe2014}.

 \begin{figure}[ht]
  \centering
  \includegraphics[height=120pt]{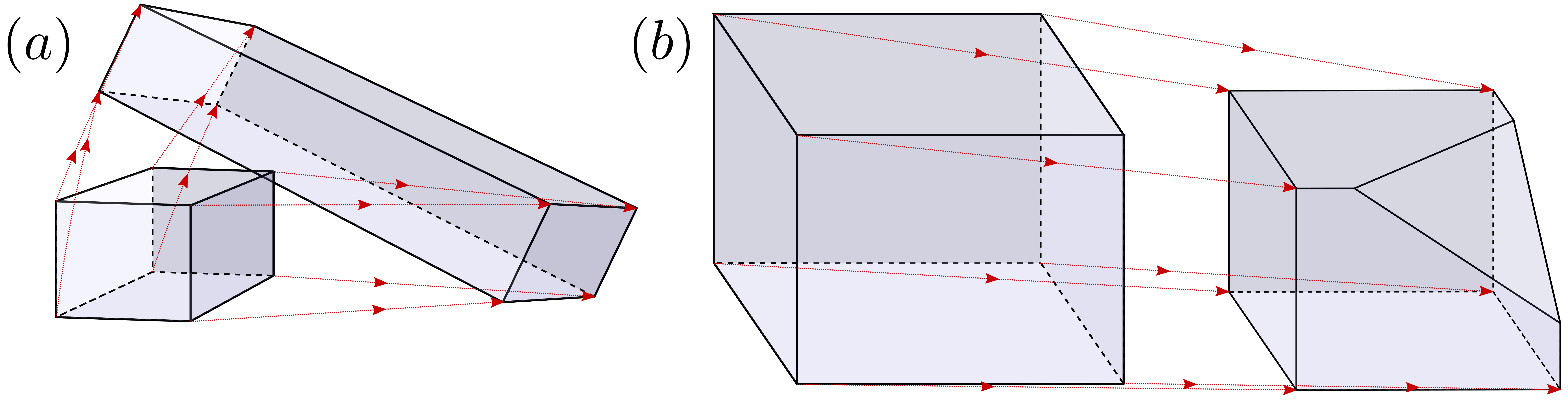}
  \caption{Continuous families of Delzant polygons on which $(a)$
  $\toricpack$ is continuous and $(b)$ $\toricpack$ is not continuous.}
  \label{fig_toricfamilies}
 \end{figure}

 \section{Continuity of symplectic \tex{$(S^1\times \R)$}\--capacities}
 \label{sec_stmetric}
 
 In this section we study the continuity of the symplectic 
 $(S^1\times \R)$\--capacity constructed in Section~\ref{sec_symplSRcap}.
 In~\cite{PaSTMetric2015} the second author defines a metric space structure
 on the moduli space of simple semitoric systems and in this section
 we will review this structure. We are only interested in the topology of $\msympST$ so, as is
 suggested in~\cite[Remark 1.31(3)]{PaSTMetric2015}, we will use a simplified
 version of the metric.  It is shown that while the simplified version
 produces a different metric space structure on $\msympST$ it induces the same 
 topology on $\msympST$ as the full 
 metric~\cite[Section 2.6]{PaSTMetric2015}. 
 
 Let us recall how the metric is constructed, since it is essential for the proofs of
 the upcoming results. One has a metric for every invariant (Definition~\ref{stlist}) and then~\cite{PaSTMetric2015}
 constructs a ``joint" metric from these. The first metric
 is the one on the Taylor series invariants, which is given as follows.
  A sequence $\{b_n\}_{n=0}^\infty$ with $b_n\in(0, \infty)$ is said to be 
  \emph{linear summable} if $\sum_{n=0}^\infty nb_n < \infty$. Let $\{b_n\}$
  be any such sequence and define
  $
   d_{\Rxyz}^{\{b_n\}_{n=0}^\infty} \left( (S^1)^\infty, (S^2)^\infty  \right) 
   $ to be
   $$
   \smashoperator{\sum_{i, j\geq0, (i,j)\neq(0,1)}} \min\big(\abs{\si^1_{i,j}-\si^2_{i,j}}, b_{i+j}\big)
                                                          + \min\big(\abs{\si^1_{0,1}-\si^2_{0,1}},2\pi-\abs{\si^1_{0,1}-\si^2_{0,1}},b_1\big)$$
  where 
  $(S^\ell)^\infty = \sum_{i, j\geq0}\si^\ell_{i,j}X^i Y^j\in\Rxyz$
  for $\ell = 1, 2$. 

  We denote the Lebesgue measure by $\la$
  and use $\symdiff$ to denote the symmetric difference.
  A measure $\nu$ on $\R^2$ is \emph{admissible} if it is in the same measure class as $\la$ (i.e. $\nu\ll\la$
  and $\la\ll\nu$) and  there exists some $g\colon \R\to\R$ such that
  the Radon-Nikodym derivative of $\nu$ with respect to $\la$ 
  satisfies
  $\frac{\mathrm{d}\nu}{\mathrm{d}\la}(x,y) = g(x)$
  for all $x,y\in\R$, where $g$ is bounded and bounded away from zero.
  
  Fix an admissible measure $\nu$.  For $\mf\in\Z_{\geq0}$ and $\vec{k}\in\Z^\mf$ let 
  $\pstpolygmfk$ denote the set of primitive semitoric
  polygons with complexity $\mf$ and twisting index $\vec{k}$ and let
  $\stpolygmfk$ denote the set of semitoric polygons which
  are the orbit of a primitive semitoric polygon in $\pstpolygmfk$.
  We may define
  $d_\P^\nu\colon \stpolygmfk\times\stpolygmfk\to[0, \infty)$
  by showing how it acts on orbits 
  $[\De_w^i]$
  elements    $\De_w^i = \big(\De^i, (\ell_{\la_j^i}, +1, k_j)_{j=1}^\mf\big) \in \pstpolygmfk$. If $\mf>0$,
  $$d_\P^\nu\big([\De_w^1],[\De_w^2]\big) = 
   \sum_{\vec{u}\in\{0,1\}^\mf} \nu \big( t_{\vec{\la}^1}^{\vec{u}}(\De^1)\symdiff 
                                          t_{\vec{\la}^2}^{\vec{u}}(\De^2)\big)$$
  and, if $\mf=0$,  $$d_\P^\nu\big([\De_w^1],[\De_w^2]\big) = \nu\big(\De^1\symdiff\De^2\big).$$
  For $I^i = \big( \mf, ((S_j^i)^\infty)_{j=1}^\mf, [\De_w^i], (h_j^i)_{j=1}^\mf\big)\in\ingred$,
  $i=1,2$ define
  $d_{\mf, \vec{k}}^{\nu, \{b_n\}_{n=0}^\infty}(I^1, I^2)$ to be
  \[
   d_\P^\nu ([\De_w^1], [\De_w^2]) + \sum_{j=1}^\mf \bigg( d_{\Rxyz}^{\{b_n\}_{n=0}^\infty}\big((S_j^1)^\infty,(S_j^2)^\infty\big)+\abs{h_j^1-h_j^2}\bigg)
  \]
  if $I^1, I^2\in\ingredmfk$ for some $\mf\in\Z_{\geq0},\vec{k}\in\Z^\mf$
  and otherwise define $d_{\mf, \vec{k}}^{\nu, \{b_n\}_{n=0}^\infty}(I^1, I^2)=1$.
  The \emph{metric $\mathcal{D}_{\rm ST}$ on $\msympST$} is the pullback of this one by  $\Phi$.
  It was shown in~\cite[Theorem A]{PaSTMetric2015} that the topology induced on
  $(\msympST,\mathcal{D}_{\rm ST})$  by the metric does not depend on
  the choice of $\nu$ or  $\{b_n\}_{n=0}^\mf$.

 Since $\mhamST$ is a quotient of $\hamST$ we can pull the topology up
 from $\mhamST$ to $\hamST$ by declaring that a set in $\hamST$ is open
 if and only if it is the preimage of an open set from $\mhamST$ under
 the natural projection.
 We endow $\sympST$ with the quotient topology relative to
 the map $\hamST\to\sympST$ which forgets the momentum map.
 Thus a map $c\colon \sympST\to [0,\infty]$ which descends to a well-defined map
 $\phi$ on $\msympST$ is continuous if and only if the map
 $\hat{c}\colon \mhamST\to [0,\infty]$ is continuous where $\hat{c}$ is
 defined by the commutative diagram:

  \begin{equation*}
   \begin{tikzcd}
   \hamST \ar{d}{}  \ar{r}[name = U]{} & \sympST \ar{d}{} \ar{r}{c} & {[0,\infty]} \\
   \mhamST \ar{r}{} \ar[bend right=55, swap]{rru}[name = D]{\hat{c}}          &  \msympST \ar{ur}{\phi}&   \\
   \end{tikzcd}
  \end{equation*}

 Let $\De_w = (\De, (\ell_{\la_j}, +1, k_j)_{j=1}^\mf)$ be
  a primitive semitoric polygon, and let $v\in\De$ be a vertex.
  \begin{definition}\label{def_admsemitoric}
   An \emph{admissible semitoric simplex of radius $r>0$ with
    center at $v$} is a subset $\Si$ of $\De$ such that 
    there exist some $A\in\mathrm{AGL}_2(\Z)$ and 
$\vec{u}\in\{0, 1\}^\mf$ satisfying:\\
    - $A(\De(r^{\nicefrac{1}{2}})) = t^{\vec{u}}_{\vec{\la}}(\Si)$;\\
    - $A(0) = t^{\vec{u}}_{\vec{\la}}(v)$;\\
    - $A$ takes the edges of $\De(r^{\nicefrac{1}{2}})$ meeting 
      at the origin to the edges 
      of $t^{\vec{u}}_{\vec{\la}}(\De)$ meeting at 
      $t^{\vec{u}}_{\vec{\la}}(v)$;\\
      -     $\Si \subset \De^{\vec{u}}$ where
     \[
      \De^{\vec{u}} = \De \setminus \left\{\, (x,y)\in\De \,\middle|\, \begin{array}{l}x = \la_j \textrm{ and }(-2\vec{u}+1)y\geq \min_{(\la_j, y_0)}y_0 + h_j\\ \textrm{ for some }j\in\{\,1, \ldots, \mf\,\}\end{array}\,\right\}.
     \]
    An \emph{admissible semitoric packing of $\De_w$} is a disjoint union
    $R = \bigsqcup_{\al\in \mathcal{A}} \Si_\al$
    where each $\Si_\al$ is an admissible simplex of some radius, where
    the radii of the simplices are allowed to be different.
  \end{definition}
   Such a simplex cannot exist at a fake corner.

 \begin{figure}[ht]
  \centering
  \includegraphics[height=85pt]{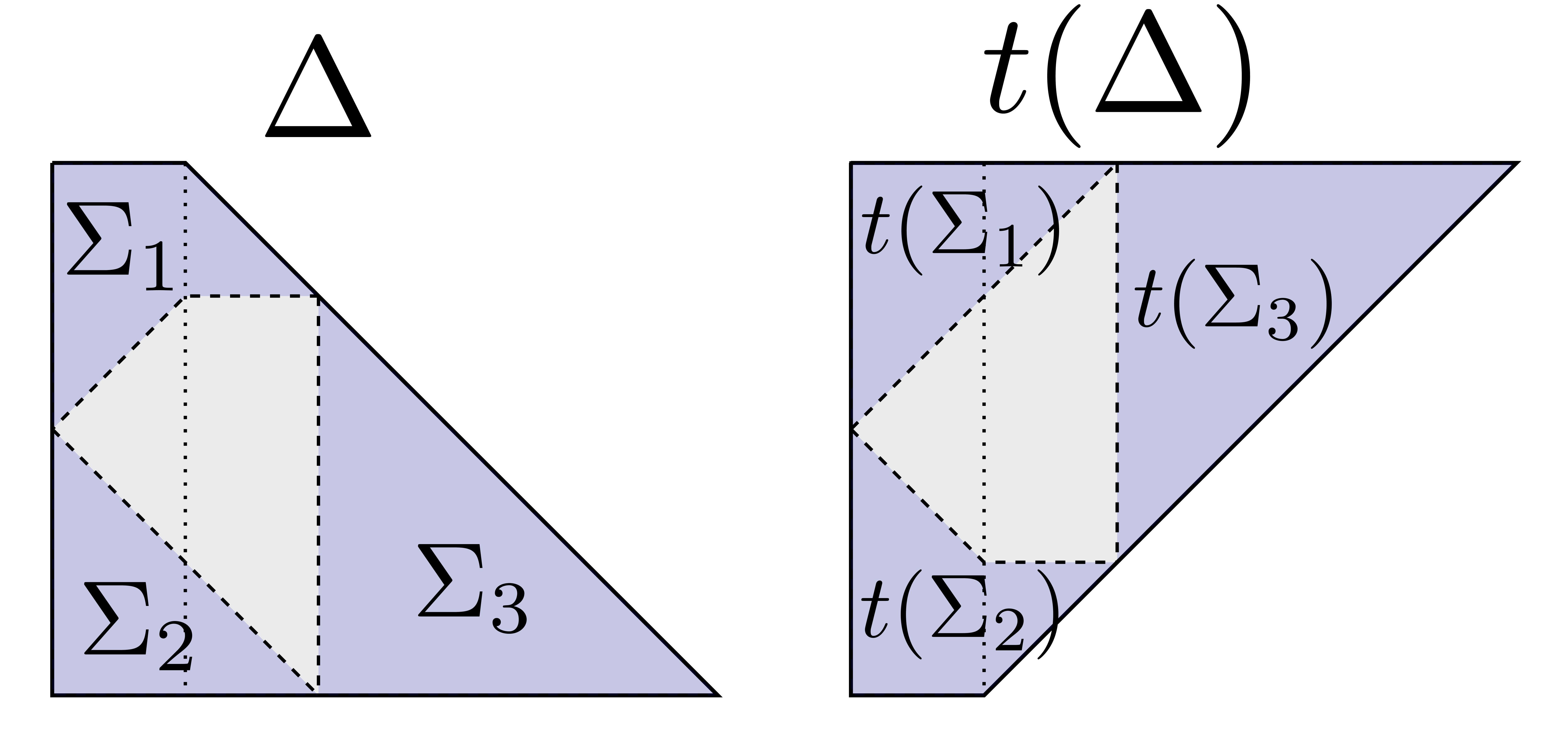}
  \caption{An admissible semitoric packing.  
  Here $t$ denotes $t_{\vec{\la}}^{\vec{u}}$.  }
  \label{fig_admstpacking}
 \end{figure}

 \begin{lemma}[\cite{Pe2007}]\label{lem_equivtoaffine}
  Let $F^B$ be a momentum map for the usual $\T^n$\--action
  on $\mathrm{B}^{2n}(r)$, $r>0$, and let $(M,\om,\phi, F)$ be a
  Hamiltonian $\T^n$\--manifold of dimension $2n$. If
  $\rho\colon \mathrm{B}^{2n}(r)\imm M$ is a 
  symplectic $\T^n$\--embedding with respect to some 
  $\La\in\mathrm{Aut}(\T^n)$
  then there exists some $x\in\R^n$ such that 
  the following diagram commutes:
  \begin{equation*}
   \begin{tikzcd}
   \mathrm{B}^{2n}(r) \ar{r}{\rho} \ar{d}[swap]{F^B} & M \ar{d}{F}\\
   \R^2 \ar{r}{(\La^t)^{-1}+x} & \R^2
   \end{tikzcd}
  \end{equation*}
  where 
  $(\La^t)^{-1}+x$ is the affine map with
  linear part $(\La^t)^{-1}$ which takes $0$ to $x$.
 \end{lemma}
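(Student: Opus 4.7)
The plan is to use the $\T^n$-equivariance of $\rho$ to pull back the momentum-map defining equation on $M$ to the one on the ball, and then read off the affine map on $\R^n$ that must relate $F^B$ and $F\circ\rho$. First I would differentiate the equivariance identity $\rho(\phi^{B}(g, p)) = \phi(\La(g), \rho(p))$ at $g=e$: for every $\mathcal{X}\in\mathrm{Lie}(\T^n)$ this gives
\[\rho_*\mathcal{X}_{\mathrm{B}} = (\mathrm{d}\La(\mathcal{X}))_M,\]
where $\mathrm{d}\La\colon\mathrm{Lie}(\T^n)\to\mathrm{Lie}(\T^n)$ is the induced Lie-algebra automorphism and the subscripts denote the infinitesimal generators of the two actions.

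Next I would combine this with the momentum-map equations $-\mathrm{d}\langle F^B,\mathcal{X}\rangle = \io_{\mathcal{X}_{\mathrm{B}}}\om_0$ and $-\mathrm{d}\langle F,\mathcal{Y}\rangle = \io_{\mathcal{Y}_M}\om$, using that $\rho$ is a symplectic embedding ($\rho^*\om = \om_0$). Taking $\mathcal{Y} = \mathrm{d}\La(\mathcal{X})$ and pulling back by $\rho$, a short calculation analogous to the one used in Lemma~\ref{lem_hamvectfield} gives
\[\mathrm{d}\langle\rho^*F,\mathrm{d}\La(\mathcal{X})\rangle = \rho^*\mathrm{d}\langle F,\mathrm{d}\La(\mathcal{X})\rangle = -\rho^*\io_{(\mathrm{d}\La(\mathcal{X}))_M}\om = -\io_{\mathcal{X}_{\mathrm{B}}}\om_0 = \mathrm{d}\langle F^B,\mathcal{X}\rangle.\]
Rewriting $\langle\rho^*F,\mathrm{d}\La(\mathcal{X})\rangle = \langle(\mathrm{d}\La)^t\rho^*F,\mathcal{X}\rangle$, one sees that for each $\mathcal{X}$ the derivative of $\langle F^B-(\mathrm{d}\La)^t\rho^*F,\mathcal{X}\rangle$ vanishes; since $\mathrm{B}^{2n}(r)$ is connected, the difference is a constant $c\in\mathrm{Lie}(\T^n)^*$.

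Finally, under the standard identifications $\mathrm{Lie}(\T^n)\cong\R^n\cong\mathrm{Lie}(\T^n)^*$ fixed earlier in the paper, $\mathrm{d}\La$ is represented by the same integer matrix as $\La\in\mathrm{Aut}(\T^n)$, and the relation becomes $F^B = \La^t(F\circ\rho) + c$. Inverting gives $F\circ\rho = (\La^t)^{-1}F^B + x$ with $x = -(\La^t)^{-1}c\in\R^n$, which is exactly the commuting diagram. The one place to be careful — and the main obstacle for bookkeeping — is the transpose/inverse convention: a $\T^n$-momentum map takes values in $\mathrm{Lie}(\T^n)^*$, so an automorphism of $\T^n$ acts on momentum images by the transpose of its Lie-algebra derivative, and moving that transpose from the $F$-side to the $F^B$-side is what produces the $(\La^t)^{-1}$ appearing on the bottom arrow.
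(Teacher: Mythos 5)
The paper does not prove Lemma~\ref{lem_equivtoaffine}; it cites it directly from~\cite{Pe2007}, so there is no internal proof to compare against. Evaluated on its own terms, your argument is correct and is the standard one for this kind of equivariance statement: differentiate $\rho\circ\phi^B(g,\cdot)=\phi(\La(g),\cdot)\circ\rho$ at the identity to get $\rho_*\mathcal{X}_{\mathrm{B}}=(\mathrm{d}\La(\mathcal{X}))_M$ along the image of $\rho$, pull back the momentum-map identity $\io_{\mathcal{Y}_M}\om=-\mathrm{d}\langle F,\mathcal{Y}\rangle$ using $\rho^*\om=\om_0$, and conclude that $\langle F^B-(\mathrm{d}\La)^t(F\circ\rho),\mathcal{X}\rangle$ has vanishing differential for every $\mathcal{X}$, hence (by connectedness of the ball) is a constant. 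Your bookkeeping on the dual map is right: under the paper's identification $\mathrm{Lie}(\T^n)^*\cong\R^n$, the adjoint $(\mathrm{d}\La)^t$ is the transposed integer matrix $\La^t$, so moving it to the other side produces $F\circ\rho=(\La^t)^{-1}F^B+x$ with $x=-(\La^t)^{-1}c$, which is exactly the claimed commutative square. The only cosmetic remark is that the paper's diagram labels the bottom row $\R^2$ even though it should read $\R^n$; you implicitly corrected this. No gaps.
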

 
 In~\cite{KaTo2005} a \emph{proper Hamiltonian $\T^n$\--manifold} is 
 a quadruple $(Q, \om^Q, F^Q, \Ga)$ where
 $(Q, \om^Q)$ is a connected 
 $2n$\--dimensional symplectic manifold with
 momentum map $F^Q$ for an action of $\T^n$
 and $\Ga \subset \mathrm{Lie}(\T^n)^*$ is
 an open convex subset with $F^Q(Q)\subset \Ga$
 and such that $F^Q$ is proper as a map to $\Ga$. 
 A proper Hamiltonian $\T^n$\--manifold is 
 \emph{centered about $p\in\Ga$} if 
 $p$ is an element of each component
 of $F^Q(Q^K)$ for each subgroup $K\subset\T^n$,
 where $Q^K$ is the set of all points in $Q$
 which are fixed by the action of all elements of $K$.
 
 \begin{lemma}[\cite{KaTo2005}]\label{lem_hamTmcenter}
  Let $(Q, \om^Q, F^Q, \Ga)$ be a proper Hamiltonian $\T^n$\--manifold
  of dimension $2n$. If $(Q, \om^Q, F^Q, \Ga)$ is centered about 
  $p\in\Ga$ and $(F^Q)^{-1}(\{p\})=\{q\}$, then $Q$ is equivariantly
  symplectomorphic to
  $
   \{\,z\in\C^n \mid p + \sum_{j=1}^n \abs{z_j}^2 \eta_j^q \in \Ga\,\},
  $
  where $\eta_1^q, \ldots, \eta_m^q\in\mathrm{Lie}(\T^n)^*$ are the weights of the isotropy
  representation of $\T^n$ on $T_q Q$.
 \end{lemma}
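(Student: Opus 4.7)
The plan is to construct an equivariant symplectomorphism from $Q$ onto the stated open subset of $\C^n$ (endowed with its standard $\T^n$\--action and momentum map $z \mapsto p + \sum_{j=1}^n \abs{z_j}^2 \eta_j^q$) in three steps: first build a local model at the fixed point $q$, then propagate it along every isotropy stratum, and finally glue via an equivariant Moser argument.

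First, I would invoke the equivariant Darboux--Weinstein theorem at $q$. Since $\T^n$ acts on the symplectic vector space $T_q Q$ by linear symplectomorphisms fixing the origin, the representation decomposes into $n$ weighted complex lines with weights $\eta_1^q, \ldots, \eta_n^q$, and the associated linearized momentum map is precisely $z \mapsto p + \sum_j \abs{z_j}^2 \eta_j^q$ after the affine shift by $F^Q(q) = p$. The theorem then produces an equivariant symplectomorphism from a $\T^n$\--invariant neighborhood $U$ of $q$ in $Q$ onto a neighborhood of $0$ in the candidate model
\[
Q_0 = \Big\{\,z\in\C^n \,\Big|\, p + \textstyle\sum_j \abs{z_j}^2 \eta_j^q \in \Ga\,\Big\},
\]
intertwining the two momentum maps.

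Next, I would extend this local isomorphism to all of $Q$ by a continuation argument. Let $W\subset Q$ be the set of points admitting an open $\T^n$\--invariant neighborhood equivariantly symplectomorphic over the momentum map to a corresponding open subset of $Q_0$. The equivariant slice theorem (Marle--Guillemin--Sternberg local normal form) provides such neighborhoods around every orbit, governed by its isotropy representation, so $W$ is open. The assumption that $(Q, \om^Q, F^Q, \Ga)$ is centered about $p$ guarantees that for every subgroup $K\subset\T^n$ and every component of $Q^K$, the value $p$ lies in the momentum image of that component; combined with $(F^Q)^{-1}(\{p\})=\{q\}$, this forces each stratum to be connected and to meet the neighborhood $U$ of $q$, hence to meet $W$. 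Using convexity of $\Ga$ one can then interpolate along the straight segment between $F^Q(q')$ and $p$ for any $q'\in Q$, and lift this segment by an equivariant Moser flow to propagate the local model from $q$ to the orbit through $q'$; properness of $F^Q$ onto $\Ga$ ensures the flow is complete and stays inside $Q$.

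The main obstacle will be the gluing step: verifying that the locally defined equivariant symplectomorphisms chosen near distinct orbits agree on their overlaps so that they assemble into a single globally defined map. The centered hypothesis is essential here, as it eliminates potential monodromy between isotropy strata by forcing all of them to be anchored at the unique fiber $\{q\}$, and it also identifies the correct weights $\eta_j^q$ uniformly. Together with the Moser flow produced above, this should give a globally defined equivariant symplectomorphism $Q\to Q_0$ whose image exhausts $Q_0$ by the same propagation applied in reverse starting from the origin of $\C^n$.
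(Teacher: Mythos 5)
The paper does not supply a proof of this lemma at all: it is quoted verbatim from Karshon--Tolman~\cite{KaTo2005} as an external result, so there is no internal argument to compare against. Assessing your sketch on its own terms, the skeleton (equivariant Darboux--Weinstein at the unique fixed point $q$, then propagate the local model using the centered hypothesis and properness of $F^Q$) is the right shape for this kind of statement, but the heart of the theorem is exactly the propagation and gluing step, and your proposal does not carry it out. You flag it yourself, calling the agreement of locally defined equivariant symplectomorphisms on overlaps ``the main obstacle,'' and then simply assert that the centered hypothesis ``should'' eliminate the monodromy. That assertion is the entire nontrivial content of the result.

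Concretely, two things are missing. First, the phrase ``lift this segment by an equivariant Moser flow'' is not a well-defined operation: Moser's argument interpolates between two symplectic forms on a fixed underlying manifold, whereas what you need is to show that a germ of an equivariant momentum-map-preserving symplectomorphism near $q$ extends uniquely along paths in $\Ga$ to a global map, and that the resulting map is both injective and surjective onto the model $Q_0$. Neither uniqueness of the extension nor surjectivity follows from what you wrote, and properness plus convexity alone do not produce it. Second, the centered hypothesis must be used through all isotropy strata, not just at $q$: Karshon--Tolman prove a rigidity/local-uniqueness theorem for proper centered Hamiltonian $\T^n$\--manifolds and then apply it, using the centered condition to show that the local normal form data at every orbit is forced by the data at $q$; your sketch gestures toward this (``forces each stratum to ... meet $W$'') but never establishes it, and in particular never rules out two different equivariant identifications near a stratum of positive-dimensional isotropy that restrict to the same identification near $q$. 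As written, this is a plausible outline of a strategy rather than a proof.
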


 We use Lemma~\ref{lem_equivtoaffine} and
 Lemma~\ref{lem_hamTmcenter} to prove the following.

 \begin{prop}\label{prop_stgeomandcombo}
  Let $(M, \om, F=(J,H))$ be a semitoric manifold such that
  \[ \Phi\big((M, \om, F)\big) = \big(\mf, ((S_j)^\infty)_{j=1}^\mf, [\De_w], (h_j)_{j=1}^\infty\big)\]
  where $\De_w = (\De, (\ell_{\la_j}, +1, k_j)_{j=1}^\mf)$ is primitive
  with associated momentum map $\widetilde{F} \in\mathcal{F}_M$
  such that $\widetilde{F}(M)=\De$.
  Then:
  \begin{enumerate}[font=\normalfont]
   \item\label{part_stcont1} Suppose $\rho\colon \mathrm{B}^{4}(r)\imm M$ is a semitoric
    embedding for some $r>0$.  Then $\widetilde{F}(\rho(\mathrm{B}^{4}(r)))\subset\De$ is an admissible
    semitoric simplex with radius $r^2$.  Conversely, if $\Si\subset\De$ is 
    an admissible semitoric simplex with radius $r^2$ then there exists a semitoric
    embedding $\rho\colon \mathrm{B}^{4}(r)\imm M$
    such that $\widetilde{F}(\rho(\mathrm{B}^{4}(r))) = \Si$.
   \item\label{part_stcont2} Let $P$ be a semitoric ball packing of $M$. Then
    $\widetilde{F}(P)\subset\De$ is an admissible packing of $\De_w$. Conversely,
    if $R$ is an admissible packing of $\De_w$ then there exists a
    semitoric ball packing $P$ of $M$ such that 
    $\widetilde{F}(P) = R$.
  \end{enumerate}
 \end{prop}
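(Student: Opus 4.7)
The plan is to reduce Proposition \ref{prop_stgeomandcombo} to its toric counterpart (Theorem \ref{thm_toricgeomandcombo}) by using the induced $\T^2$-structure on $M^{\vec{\ep}}$ provided by Corollary \ref{cor_mepsilon}. Unpacking Definition \ref{def_stemb}, a semitoric embedding $\rho\colon\mathrm{B}^4(r)\imm M$ is precisely a symplectic $\T^2$-embedding into $(M^{\vec{\ep}},\om,\phi_{\vec{\ep}})$ for some $\vec{\ep}\in\{\pm1\}^\mf$, and the momentum map for $\phi_{\vec{\ep}}$ agrees (up to a vertical translation) with the restriction of $\widetilde{F}$ to $M^{\vec{\ep}}$.

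For the forward direction of \ref{part_stcont1}, Lemma \ref{lem_equivtoaffine} applied to $\rho$ produces some $A\in\mathrm{AGL}_2(\Z)$ with $\widetilde{F}\circ\rho=A\circ F^B$, where $F^B$ is a standard momentum map for the $\T^2$-action on $\mathrm{B}^4(r)$ with image $\De(r^2)$. Taking $\vec{u}=((1-\ep_j)/2)_{j=1}^\mf$, I would check the four defining properties of an admissible semitoric simplex in Definition \ref{def_admsemitoric}: $A$ maps $\De(r^2)$ onto $\Si:=\widetilde{F}(\rho(\mathrm{B}^4(r)))$, sending $0$ and the coordinate edges of $\De(r^2)$ to a vertex $v$ of $\De$ and its adjacent edges, which follows from the $\T^2$-equivariance of $\rho$ together with the local Delzant form of $\widetilde{F}$ near $v$. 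The inclusion $\Si\subset\De^{\vec{u}}$ is then immediate from $\rho(\mathrm{B}^4(r))\subset M^{\vec{\ep}}$ together with the correspondence between the cuts $\ell^{\vec{\ep}}$ in $F(M)$ and the vertical cuts removed from $\De$.

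For the converse, given an admissible semitoric simplex $\Si\subset\De^{\vec{u}}$ of radius $r^2$ centered at a vertex $v$, set $\ep_j=1-2u_j$ and choose an open convex neighborhood $U$ of $\Si$ in $\De^{\vec{u}}$ on which $\widetilde{F}|_{M^{\vec{\ep}}}$ is proper. Then $Q:=(\widetilde{F}|_{M^{\vec{\ep}}})^{-1}(U)$ is a proper Hamiltonian $\T^2$-manifold in the sense of \cite{KaTo2005}, centered about $v$ with $\widetilde{F}^{-1}(v)$ a single fixed point. Lemma \ref{lem_hamTmcenter} then provides a $\T^2$-equivariant symplectomorphism of $Q$ onto a subset of $\C^2$; because $\Si$ is the $\mathrm{AGL}_2(\Z)$-image of $\De(r^2)$, the preimage of $\Si$ under this symplectomorphism is the standard ball $\mathrm{B}^4(r)$, and inverting yields the required semitoric embedding. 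Part \ref{part_stcont2} then follows by applying part \ref{part_stcont1} to each ball of the packing individually and observing that disjointness of the balls in $M$ forces disjointness of their images under $\widetilde{F}$ in $\De$.

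The main obstacle will be constructing the neighborhood $U$ in the converse direction so that the centering hypothesis of Lemma \ref{lem_hamTmcenter} is satisfied and so that the weights $\eta_j^q$ appearing in its conclusion coincide with the primitive edge vectors of $\De$ at $v$. Achieving this requires shrinking $U$ to avoid all other vertices of $\De$ and all focus-focus cuts while preserving convexity, then invoking Delzant's local normal form at $v$ to match the standard weights; careful bookkeeping is also needed to align the $\mathrm{Aut}(\T^2)$-twist produced by Lemma \ref{lem_hamTmcenter} with the one implicit in the definition of a semitoric embedding.
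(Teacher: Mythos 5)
Your strategy is essentially the paper's: pass to the induced $\T^2$\--action on $M^{\vec{\ep}}$, use Lemma~\ref{lem_equivtoaffine} for the forward implication, and the Karshon--Tolman local model (Lemma~\ref{lem_hamTmcenter}) for the converse, with part~\ref{part_stcont2} reduced to part~\ref{part_stcont1}. Two points worth tightening. In the forward direction, Lemma~\ref{lem_equivtoaffine} relates $F^{\vec{\ep}}\circ\rho$, not $\widetilde{F}\circ\rho$, to an affine transformation of $F^B$; since $\widetilde{F}=(t^{\vec{u}}_{\vec{\la}})^{-1}\circ F^{\vec{\ep}}$ on $M^{\vec{\ep}}$, this is exactly what Definition~\ref{def_admsemitoric} asks for, namely $A(\De(r^2))=t^{\vec{u}}_{\vec{\la}}(\Si)$ rather than $A(\De(r^2))=\Si$; your phrasing conflates the two. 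The ``main obstacle'' you flag in the converse is resolved in the paper by a more economical choice than an auxiliary convex neighborhood $U$: one takes $N=\widetilde{F}^{-1}(\Si)$ directly (the preimage of the simplex itself, not of a neighborhood) and lets $\Ga$ be the unique open half-plane with $\Ga\cap\De'=\Si'$, where $\De'=t^{\vec{u}}_{\vec{\la}}(\De)$. Then properness of $\widetilde{F}^N\colon N\to\Ga$ is immediate from properness of $J$ together with $(\widetilde{F}^N)^{-1}(\Ga\setminus\Si')=\varnothing$; centering at the unique vertex $p$ of $\Si'$ is automatic since $\Si'$ has only one vertex; and the Karshon--Tolman model returns exactly $\{\,z\in\C^2\mid p+\abs{z_1}^2\al_1+\abs{z_2}^2\al_2\in\Ga\,\}=\mathrm{B}^4(r)$, so no after-the-fact restriction to the preimage of $\Si$ is needed. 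This choice eliminates the bookkeeping you were worried about.
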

  
 \begin{proof}
  Part \eqref{part_stcont2} follows from Part 
  \eqref{part_stcont1} since the semitoric simplices
  associated to disjoint semitoricly embedded balls  
  are disjoint.  This follows from the fact that $\widetilde{F}^{-1}(p)$
  is a $2$\--dimensional submanifold of $M$ for any 
  regular point $p\in\De$ and the embedded balls 
  are $2$\--dimensional.
  
  Suppose that $B\subset M$ is a semitoricly embedded ball
  of radius $r>0$.  Then for some $\vec{\ep}\in\{-1,+1\}^\mf$
  the map $\rho_{\vec{\ep}}\colon \mathrm{B}^{4}(r)\imm M^{\vec{\ep}}$ is a 
  $\T^2$\--embedding with respect to some
  $\La\in\mathrm{Aut}(\T^2)$.
  Recall $M^{\vec{\ep}}$ is a Hamiltonian $\T^2$\--manifold
  and denote a momentum map for this action by $F^{\vec{\ep}}$.
  Let $p=F^{\vec{\ep}}(\rho(0))$ and let 
  $\De^{\vec{\ep}} = F^{\vec{\ep}}(M^{\vec{\ep}})$.
  Hence by Lemma~\ref{lem_equivtoaffine}
  the diagram
  \begin{equation*}
   \begin{tikzcd}
   \mathrm{B}^{4}(r) \ar{r}{\rho} \ar{d}[swap]{F_\mathrm{B}} & M^{\vec{\ep}} \ar{d}{F^{\vec{\ep}}}\\
   \De_\mathrm{B} \ar{r}{(\La^t)^{-1}+x} & \De^{\vec{\ep}}
   \end{tikzcd}
  \end{equation*}
  commutes for some $x\in\mathrm{Lie}(\T^2)^*$.  Since $\La$ is an automorphism so is $(\La^t)^{-1}$, hence it
  sends the weights of the isotropy representation of $\T^2$
  on $T_0 (\mathrm{B}^{4}(r))$ to the weights
  of the isotropy representation on $T_p M$.  Since $(\La^t)^{-1}$ is linear and $\De_\mathrm{B}$
  is the convex hull of the isotropy weights of the representation on $T_0 (\mathrm{B}^{4}(r))$ and the origin,
  we find that 
  \[\Si^{\vec{\ep}} := [(\La^t)^{-1}+x](\De_\mathrm{B})\]
  is the convex hull of $p$, $p+r^2 \al_1$, and $p+ r^2 \al_2$, minus
  the convex hull of $p+r^2\al_1$ and $p+r^2\al_2$,
  where $\al_1$ and $\al_2$ are
  the weights of the isotropy representation of $\T^2$ on $T_p M$.  For
  $\vec{u} = \frac{1}{2}(1-\vec{\ep})$ recall that 
  $t^{\vec{u}}_{\vec{\la}}(\De) = \De^{\vec{\ep}}$ and let 
  $
   \Si = \bigl(t^{\vec{u}}_{\vec{\la}}\bigr)^{-1}(\Si^{\vec{\ep}}).
  $
  Notice that $\Si = \widetilde{F}(\rho(\mathrm{B}^{4}(r)))\subset \De$ and is an admissible semitoric simplex.

  To prove the converse let $\Si\subset\De$ be an admissible semitoric simplex.
  This means that there exists some $\vec{\ep}\in\{-1,+1\}^\mf$ such that
  \[
   \Si' := t^{\vec{u}}_{\vec{\la}}(\Si)
  \]
  satisfies the requirements of
  Definition~\ref{def_admsemitoric}, where $\vec{u} = \frac{1}{2}(1-\vec{\ep})$.
  Let $\De' = t^{\vec{u}}_{\vec{\la}}(\De)$.  Let $p$ be the unique vertex of $\Si'$.
  Thus, $\Si'$ is the convex hull of $p$, $p + r^2 \al_1$, and $p+r^2 \al_2$,
  minus the convex hull of $p+r^2\al_1$ and $p+r^2\al_2$, for 
  some $\al_i\in\R^2$, $i=1,2$.   Let $\Ga\subset\R^2$ be the unique open half
  plane satisfying $\Ga\cup\De' = \Si'$.  Let $N=\widetilde{F}^{-1}(\Si)$
  and let $\om^N = \om|_{N}$.  We can see that $N\subset M$ is open
  and by the proof of the Atiyah-Guillemin-Sternberg Convexity 
  Theorem~\cite{At1982, GuSt1982} we know that $N$ is connected.
  The map $\widetilde{F}$ is proper
  because its first component, $J$,  is proper and thus
  $
   \widetilde{F}^N := t^{\vec{u}}_{\vec{\la}}\left( \widetilde{F}|_N\right)\colon  N \to \Si'
  $
  is proper.
  Therefore $\widetilde{F}^N\colon  N\to\Ga$
  is proper because $(\widetilde{F}^N)^{-1}(\Ga\setminus\Si')=\varnothing$,
  and hence $(N,\om^N, \widetilde{F}^N, \Ga)$
  is a proper Hamiltonian $\T^2$\--manifold.  
  Since $(N,\om^N, \widetilde{F}^N, \Ga)$ is centered about $p\in\R^2$
  by Lemma~\ref{lem_hamTmcenter} we conclude that $N$ is equivariantly
  symplectomorphic to
  \[
   \{\,z\in\C^2 \mid p + \abs{z_1}^2 \al_1 + \abs{z_2}^2 \al_2 \in \Ga\,\} = \mathrm{B}^4(r).
  \]
  It follows that there exists a symplectic $\T^2$\--embedding
  $\rho\colon \mathrm{B}^4(r)\imm M^{\vec{\ep}}$ with image $N$ so
  $
   \widetilde{F}(\rho(\mathrm{B}^4(r)))=\widetilde{F}(N)=\Si
  $.
 \end{proof}
 
 Define the \emph{optimal semitoric polygon packing function}
  $\pack_{\semitoric}\colon \stpolyg\to[0,\infty]$
  by
  \[\pack_{\semitoric}([\De_w]) = \sup\{\,\mathrm{vol}_\P (P)\mid P\textrm{ is an admissible semitoric packing of }\De_w\,\}. \]
  It is well-defined because any two primitive semitoric polygons in
  the same orbit are related to one another by a transformation 
  in $G_\mf\times\mathcal{G}$ which sends semitoric packings
  to semitoric packings and preserves volume.
 
 \begin{definition}
  We call $\al\in(0,\pi)$ a \emph{smooth angle} if it can be obtained
  as an angle in a Delzant polygon.  
 \end{definition}
  Equivalently, $\al\in(0,\pi)$ is
  smooth if and only if it is the angle at the origin of
  $A_{\al}(\De(1))$ for some $A_{\al}\in\mathrm{SL}_2(\Z)$.
 
 \begin{lemma}\label{lem_smoothanglesdiscrete}
  The set of smooth angles is discrete in $(0,\pi)\subset\R$.
 \end{lemma}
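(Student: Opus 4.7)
The plan is to reduce the claim to an elementary lattice estimate: a smooth angle is controlled by the product of the norms of two primitive integer vectors whose determinant is $1$, and that product is bounded on any compact subinterval of $(0,\pi)$.

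First I would recall the characterization of smooth angles. By the Delzant condition, $\alpha\in(0,\pi)$ is smooth if and only if there exist primitive vectors $u,v\in\Z^2$ with $\det(u,v)=1$ such that $\alpha$ is the (oriented) angle from $u$ to $v$. In particular,
\[
\sin\alpha \;=\; \frac{\det(u,v)}{\abs{u}\,\abs{v}} \;=\; \frac{1}{\abs{u}\,\abs{v}},
\qquad
\cos\alpha \;=\; \frac{u\cdot v}{\abs{u}\,\abs{v}},
\]
so the pair $(\cos\alpha,\sin\alpha)$, and therefore $\alpha$ itself, is determined by the pair $(u,v)$.

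Next I would fix an arbitrary compact subinterval $K=[\varep,\pi-\varep]\subset(0,\pi)$ and set $\delta=\sin\varep>0$. For any smooth $\alpha\in K$ coming from a pair $(u,v)$ as above,
\[
\abs{u}\,\abs{v} \;=\; \frac{1}{\sin\alpha} \;\le\; \frac{1}{\delta}.
\]
Since $u,v\in\Z^2\setminus\{0\}$, each of $\abs{u}$ and $\abs{v}$ is at least $1$, so both are individually bounded above by $1/\delta$. There are only finitely many integer vectors of bounded Euclidean norm, hence only finitely many possible pairs $(u,v)$, and consequently only finitely many values of $\alpha$ in $K$. This finiteness on every compact subinterval shows that the set of smooth angles has no accumulation point inside $(0,\pi)$, and hence is discrete.

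The only subtlety, and what I would double-check, is the claim that the angle at a vertex of a Delzant polygon coincides (up to the usual ambiguity $u\leftrightarrow v$, which leaves the unoriented angle unchanged) with the angle between two primitive lattice vectors of determinant one; this follows directly from the definition of the Delzant condition recalled in Section~\ref{sec_HamilTRactions}, using that any vertex of a Delzant polygon can, after an $\mathrm{AGL}_2(\Z)$ transformation, be modeled on $A_\alpha(\De(1))$. Everything else is purely the pigeonhole observation above, so no further obstacle is expected.
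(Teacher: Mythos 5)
Your proof is correct and uses essentially the same argument as the paper: both rely on the identity $\sin\alpha = 1/(\abs{u}\abs{v})$ for primitive lattice vectors with determinant $1$, the lower bound $\abs{u},\abs{v}\geq 1$, and the finiteness of lattice points in a bounded region. The only cosmetic difference is that you phrase the estimate on a compact subinterval $[\varep,\pi-\varep]$ rather than in a small neighborhood of a fixed smooth angle, but these are interchangeable here.
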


 \begin{proof}
  Fix a smooth angle $\al\in(0,\pi)$ and 
  fix some $\varep>0$ small enough so that
  $(\al-\varep, \al + \varep)\subset (0,\pi)$. Let 
  \[
   B_{\varep}(\al) = \{\,\be\in(0,\pi)\mid\be\textrm{ is a smooth angle and }\abs{\al-\be}<\varep\,\}
  \]
  and let $\de_{\varep}>0$ be such that if $\be\in B_{\varep}(\al)$ then
  $\abs{\sin (\al) - \sin (\be)}<\de_{\varep}$.
  Now fix any $\be\in B_{\varep}(\al)$.  This means there exists
  some $A_\be\in\mathrm{SL}_2(\Z)$ such that $\be$ is the angle at the
  origin of $\De=A_\be(\De(1))$.  Let $\ell_1, \ell_2\in\R$ denote the
  lengths of two edges of the simplex $\De$ which are adjacent to the
  vertex at the origin.  These each represent the magnitude of
  a vector in $\Z^n$ so $\ell_i\geq 1$ for $i=1, 2$.  
  By the choice of $\de_\varep$ we have that
  $\sin(\be) > \sin(\al) -\de_\varep$. Since $\De$ has area $\nicefrac{1}{2}$ we know that
  $\frac{\ell_1 \ell_2 \sin (\be)}{2} = \frac{1}{2}$
  and so for $i = 1, 2$ we conclude that
  $1 = \ell_1 \ell_2 \sin (\be) \geq \ell_i \sin (\be)$
  which implies that
  \[
   \ell_i \leq \frac{1}{\sin(\be)} < \frac{1}{\sin (\al) - \de_\varep}.
  \]
  Therefore associated to each
  $\be\in B_{\varep}(\al)$ there is a pair of vectors in $\Z^2$
  each with length less than $(\sin (\al) - \de_\varep)^{-1}$,
  a value which does not depend on $\be$.  There
  are only finitely many such vectors.
 \end{proof}
 The proof of Lemma~\ref{lem_smoothanglesdiscrete} is taken from the proof 
 of~\cite[Theorem 7.1]{FiPe2014} and is a two-dimensional version of 
 the strategy used in Theorem~\ref{thm_toricpolycont}. 
 Let $\al\in(0,\pi)$ be called a \emph{hidden smooth angle} if it can be
 obtained as a hidden corner in a primitive semitoric polygon.
  
 \begin{cor}\label{cor_smoothhiddendiscrete}
  The set of hidden smooth angles is discrete in $(0,\pi)\subset\R$.
 \end{cor}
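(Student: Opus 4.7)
The plan is to adapt the proof of Lemma~\ref{lem_smoothanglesdiscrete} to the hidden setting. A hidden smooth angle $\alpha$ is realized by primitive integer vectors $u, v \in \Z^2$ with $\det(u, v) > 0$ and $\det(u, Tv) = 1$. My key observation is that since $T \in \mathrm{SL}_2(\Z)$ preserves primitivity, the vector $Tv$ is also primitive, and $(u, Tv)$ forms a Delzant pair. The triangle with vertices $0$, $u$, $Tv$ therefore has area $\tfrac{1}{2}$, yielding
\[
 |u| \cdot |Tv| \cdot \sin\gamma = 1, \qquad \gamma := \angle(u, Tv),
\]
with $\gamma$ itself a smooth angle.

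I would first work in polar coordinates adapted to the top-corner convention, writing $u$ in the third quadrant with polar angle $\pi+\phi$ and $v$ in the fourth quadrant with polar angle $-\psi$, so $\phi + \psi = \pi - \alpha$. A direct computation with the shear $T$ yields $\gamma - \alpha = \psi + \psi'$ where $\psi' := \arctan(1 - \tan\psi)$, and an elementary analysis of this function on $[0, \pi/2]$ gives the bound $\psi + \psi' \in [0, 2\arctan(\tfrac{1}{2})]$. Combining this with the triangle-area identity $|u||v|\sin\alpha = 1 + |u_1||v_1|$ at a top corner forces every hidden smooth angle to lie strictly in $(0, 3\pi/4)$.

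Now suppose for contradiction that $\{\alpha_j\}$ is a sequence of distinct hidden smooth angles converging to a hidden smooth angle $\alpha_0$. I choose realizing primitive pairs $(u_j, v_j)$, set $w_j := Tv_j$, and let $\gamma_j := \angle(u_j, w_j)$. Passing to a subsequence along which the $S^1$-directions of $u_j$ and $v_j$ converge, $\gamma_j$ converges to some $\gamma_\infty \in [0, \pi]$. When $\gamma_\infty \in (0, \pi)$, Lemma~\ref{lem_smoothanglesdiscrete} gives $\gamma_j = \gamma_\infty$ for all large $j$; the area identity then bounds $|u_j|$ and $|w_j|$, and since $|v_j| \leq \|T^{-1}\| \cdot |w_j|$, all vectors are uniformly bounded. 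As primitive integer pairs of bounded norm form a finite set, the $\alpha_j$ take only finitely many values, contradicting distinctness. The case $\gamma_\infty = 0$ is ruled out by $\gamma_j \geq \alpha_j > 0$.

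The main obstacle, and technical heart of the argument, is the remaining case $\gamma_\infty = \pi$, which can occur only when $\alpha_0 \geq \pi - 2\arctan(\tfrac{1}{2})$. Here the Delzant triangles degenerate: $u_j$ and $w_j$ become antiparallel while $|u_j| \cdot |w_j| \to \infty$. I would address this by tracking directions on $S^1$ to show that along a further subsequence $u_j$ stabilizes to a fixed primitive vector $u_\infty$ while $\hat{w}_j \to -\hat{u}_\infty$, forcing $\alpha_0 = \angle(u_\infty, -T^{-1}u_\infty)$. A separate Diophantine argument, using the strict inequality $\sin\alpha_0 > \cos\phi\cos\psi$ from the first step applied to the putative realizing pair at $\alpha_0$, then shows that no such ``degenerate limit angle'' can itself be realized as a hidden smooth angle, yielding the required contradiction.
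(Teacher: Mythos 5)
The paper itself offers no proof of this corollary---it is stated as an immediate consequence of Lemma~\ref{lem_smoothanglesdiscrete}---so there is no argument of the authors' to compare against. You have correctly noticed that the deduction is \emph{not} immediate: the Delzant area identity $\ell_1\ell_2\sin\beta=1$ that drives the lemma is replaced at a hidden corner by $|u|\,|v|\sin\alpha = \det(u,v) = 1-u_1v_1$, where the right-hand side is unbounded, so one cannot bound $|u|,|v|$ directly. Your idea of passing to the Delzant pair $(u,Tv)$, applying the lemma to $\gamma=\angle(u,Tv)$, and then controlling the relation between $\alpha$ and $\gamma$ is a reasonable way to try to bridge this gap. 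However, the proposal as written has two genuine defects.

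First, the reduction to polar coordinates with $u$ in the third quadrant and $v$ in the fourth is not legitimate. At a corner of the top boundary the only constraint is that the left-edge slope exceed the right-edge slope; the left edge may well ascend (so $u$ lies in the second quadrant) or the right edge ascend (so $v$ lies in the first). The consequences you extract from this restricted setup fail in the general case: for instance $u=(-1,1)$, $v=(1,-3)$ give $Tv=(1,-2)$, $\det(u,v)=2>0$, $\det(u,Tv)=1$, and this is a perfectly good hidden corner with
\[
\cos\alpha \;=\; \frac{u\cdot v}{|u|\,|v|} \;=\; \frac{-4}{\sqrt{2}\sqrt{10}} \;=\; -\frac{2}{\sqrt{5}},
\qquad \alpha \approx 153.4^\circ \;>\; \tfrac{3\pi}{4}.
\]
So the claimed confinement of hidden smooth angles to $(0,3\pi/4)$ is simply false, and the inequalities you derive from the identity $\gamma-\alpha=\psi+\psi'$ (which is itself only valid in the restricted quadrant picture) do not hold in general.

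Second, the case you yourself identify as the ``technical heart''---$\gamma_j\to\pi$ with $|u_j|\,|Tv_j|\to\infty$---is not actually argued; it is only asserted that ``a separate Diophantine argument'' will show the degenerate limit angle $\angle(u_\infty,-T^{-1}u_\infty)$ cannot be a hidden smooth angle. This is precisely the nontrivial content: after passing to a subsequence the accumulation point must have the form $\angle(\hat u, -T^{-1}\hat u)$ (not necessarily with $\hat u$ rational if $|u_j|\to\infty$), and one must show no such angle is realized by a hidden pair. I checked a couple of instances (e.g.\ $3\pi/4$ and $\arccos(-3/\sqrt{10})$) and they are indeed not hidden smooth angles, but a proof covering all $\hat u$ is exactly what is missing. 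Until that step is supplied the argument is not complete, and the quadrant error above means the preliminary reductions also need to be redone without the restriction on the signs of $u_2$ and $v_2$.
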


 It is important to notice that a sequence of smooth angles can
 approach $\pi$. This must be the case, for example, if a semitoric
 polygon has infinitely many vertices.
 
 \begin{definition}\label{def_Nvert}
  We say that a vertex $v$ of
  $(\De, (\ell_{\la_j}, +1, k_j)_{j=1}^\mf))$
  is \emph{non-fake} if
  it is either Delzant or hidden in one, and hence all, elements
  of the affine invariant.
  For $N\geq1$ let $\pstpolygn$
  denote the set of primitive polygons with exactly $N$ non-fake
  vertices and let $\stpolygn$ denote the set of 
  $(G_\mf\times\mathcal{G})$\--orbits of elements of $\pstpolygn$. Let
  $\ingred^N$ be the set of all semitoric ingredients
  for which the affine invariant is an element of
  $\stpolygn$ and let 
  \[\sympplain^{4,S^1\times\R}_{\mathrm{ST}, N} = \Phi^{-1}(\ingred^N)\]
  where $\Phi$ is as in Equation~\eqref{eqn_Phi}.
 \end{definition}

 Recall $\mathcal{H}_p^{\varep}(v)$ defined in Equation
 \eqref{eqn_halfplane}.
 The following are two operations which can be
 performed on $[\De_w]$ to produce a new element of $\pstpolyg$.
 
 \begin{definition}\label{def_stchop}
 Let $\De_w =(\De, (\ell_{\la_j}, +1, k_j)_{j=1}^\mf)$.
 Let $p\in\De$ be a vertex and let 
 $v_1, v_2\in\Z^2$ be the primitive inwards pointing normal vectors
 to the two edges which meet at $p$ ordered so that
 $\det(v_1, v_2)>0$.

  If $p$ is a Delzant vertex of $\De_w$ then the
   \emph{$\varep$\--corner chop of $\De_w$ at $p$} is the 
   primitive semitoric polygon
   \[
    \De_w^{p, \varep} =\left(\De \cap \mathcal{H}_{p}^{\varep} (v_1+v_2), (\ell_{\la_j}, +1, k_j)_{j=1}^\mf\right).
   \]
   Similarly, given $[\De_w]$ we say that 
   $[\De_w^{p, \varep}]$ is the
   \emph{$\varep$\--corner chop of $[\De_w]$ at $p$}.
   
  Suppose $p$ is a hidden corner of $\De_w$ and thus
   there exists 
   $j\in\{\,1, \ldots, \mf\,\}$ such that $p\in\ell_{\la_j}$.  The
   \emph{$\varep$\--hidden corner chop of $\De_w$ at $p$} is 
   the primitive semitoric polygon
   \[
    \De_w^{p, \varep} =\left(\De \cap t_{\ell_{\la_j}}^{-1}\big(\mathcal{H}_{p}^{\varep} (v_1+v_2)\big), (\ell_{\la_j}, +1, k_j)_{j=1}^\mf\right).
   \]
   We say that 
   $[\De_w^{p, \varep}]$ is the
   \emph{$\varep$\--hidden corner chop of $[\De_w]$ at $p$}.
 \end{definition}
 
 The hidden corner chop of a hidden corner amounts to acting on the polygon
 with $t_{\ell_{\la_j}}^1$ to transform the hidden corner into a Delzant
 corner, performing the usual corner chop on this Delzant corner, and then
 transforming the polygon back with $t_{\ell_{\la_j}}^{-1}$.  This is
 shown in Figure~\ref{fig_hiddencornerchop}.

 \begin{figure}[ht]
 \centering
  \includegraphics[height=65pt]{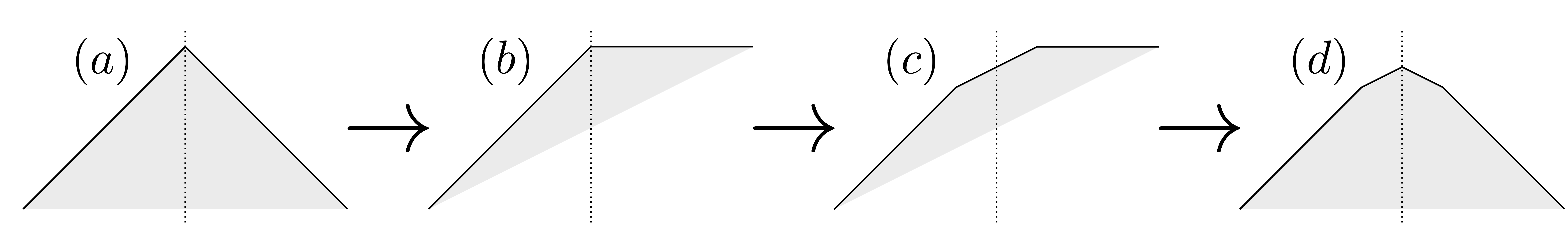}
  \caption{
  In $(a)$ a hidden corner
  is shown.  In $(b)$ we unfold it by reversing the sign of the
  associated $\ep_i$ resulting in a Delzant corner.  In $(c)$ we perform
   corner chop on this corner and in $(d)$ the 
  $\ep_i$ returns to its original sign.}
  \label{fig_hiddencornerchop}
 \end{figure}
  
 \begin{lemma}\label{lem_stsimpleobs}
  Fix $N\in\Z_{\geq0}$. 
  Each $[\De_w]\in\stpolygn$
  has an open neighborhood
  in $\stpolygn$ which consists
  exclusively of transformations of $[\De_w]$ in which its sides
  are moved in a parallel way.
  Moreover, any sufficiently small neighborhood of $[\De_w]$
  in $\stpolyg$
  is contained in $\cup_{(N'\geq N)}\stpolygnprime$.  
 \end{lemma}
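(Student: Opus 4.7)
The plan is to mimic the two-part strategy used for toric polygons in Theorem~\ref{thm_toricpolycont} and Proposition~\ref{prop_simpleobs}, with Lemma~\ref{lem_smoothanglesdiscrete} and Corollary~\ref{cor_smoothhiddendiscrete} playing the role that discreteness of $\mathrm{SL}_n(\Z)$-primitive edge data played there. The added subtlety is that now primitive semitoric polygons live in an orbit under $G_\mf\times\mathcal{G}$ and are compared via the metric $d_\P^\nu$, which measures the symmetric difference of every transform $t^{\vec u}_{\vec \la}(\De)$ simultaneously. So the first step is bookkeeping: given a sequence $[\De_w^{(m)}]\to [\De_w]$ in $\mathcal{D}_{\rm ST}$, fix a primitive representative $\De_w = (\De, (\ell_{\la_j}, +1, k_j)_{j=1}^\mf)$ of the limit. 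Since $d^{\nu,\{b_n\}}_{\mf,\vec k}$ equals $1$ whenever complexity or twisting index disagrees, we may assume each $\De_w^{(m)} = (\De^{(m)}, (\ell_{\la_j^{(m)}}, +1, k_j)_{j=1}^\mf)$ has the same $\mf$ and $\vec k$, with $\la_j^{(m)}\to\la_j$ and $\nu\bigl(t^{\vec u}_{\vec\la^{(m)}}(\De^{(m)})\,\symdiff\,t^{\vec u}_{\vec\la}(\De)\bigr)\to 0$ for every $\vec u\in\{0,1\}^\mf$.

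Next I would establish a local vertex-convergence statement. Since $\nu$ is admissible and thus comparable to Lebesgue measure, $L^1$-convergence of indicator functions forces convergence of boundaries in a Hausdorff-type sense on compact pieces. Choosing the $\vec u$ for which a given non-fake vertex $v$ of $\De_w$ becomes a genuine (Delzant) corner of $t^{\vec u}_{\vec\la}(\De)$, the local boundary of $t^{\vec u}_{\vec\la^{(m)}}(\De^{(m)})$ near $v$ must be the graph of a convex function converging to the local boundary of $t^{\vec u}_{\vec\la}(\De)$. Hence there is a unique vertex $v^{(m)}$ of $\De^{(m)}$ (of the same Delzant/hidden type as $v$, since its type is determined by the $\vec u$-transform in which it sits on $\partial^{\mathrm{top}}$ and on some $\ell_{\la_j}$) with $v^{(m)}\to v$, and the edge directions at $v^{(m)}$ converge to those at $v$.

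The heart of the argument is then the discreteness step. After transforming to a Delzant chart at $v$ (using $t^{\vec u}_{\vec\la}$ if $v$ is hidden), the primitive integer direction vectors at $v^{(m)}$ converge to those at $v$; by Lemma~\ref{lem_smoothanglesdiscrete} (and Corollary~\ref{cor_smoothhiddendiscrete} if $v$ is hidden), the set of possible angles for Delzant, resp.\ hidden, corners is discrete in $(0,\pi)$. So for $m$ large the angles, and in fact the primitive edge vectors, at $v^{(m)}$ and $v$ must coincide exactly. This pins down the local combinatorial structure at every non-fake vertex of $\De^{(m)}$ to be identical to that of $\De$. Since $\De_w^{(m)}\in\pstpolygn$ has the same number $N$ of non-fake vertices as $\De_w$, no new non-fake vertices appear; the remaining fake corners of $\De_w^{(m)}$ sit on $\ell_{\la_j^{(m)}}$ and are likewise determined. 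Thus $\De^{(m)}$ differs from $\De$ only by parallel translations of its edges, which proves the first claim.

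For the moreover part, a similar argument shows that the vertex-convergence step holds for any sequence in $\stpolyg$ converging to $[\De_w]$: each of the $N$ non-fake vertices of $\De_w$ persists in nearby primitive polygons (it cannot merge into a fake corner because of the angle-discreteness, and it cannot disappear because that would cost a definite $d_\P^\nu$-distance). Therefore any element of a sufficiently small $\stpolyg$-neighborhood of $[\De_w]$ has at least $N$ non-fake vertices, i.e.\ lies in $\stpolygnprime$ for some $N'\geq N$. The main obstacle I expect is the first step — extracting clean local boundary convergence from the orbit-wise symmetric-difference metric $d_\P^\nu$ at hidden corners, where one must work in the $t^{\vec u}_{\vec\la}$-unfolded picture and track that the unfoldings of $\De_w^{(m)}$ and $\De_w$ correspond under the same $\vec u$; once that is in hand the discreteness argument proceeds verbatim as in the toric setting.
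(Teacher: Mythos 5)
Your proposal is correct and follows essentially the same strategy as the paper: both rest on the discreteness of smooth (Lemma~\ref{lem_smoothanglesdiscrete}) and hidden smooth (Corollary~\ref{cor_smoothhiddendiscrete}) angles to pin down the edge directions at every non-fake vertex of a nearby semitoric polygon, and both conclude that within $\stpolygn$ the only nearby deformations are parallel translations of edges, while dropping below $N$ non-fake vertices is prohibited by the same discreteness. Your write-up simply fills in more of the metric bookkeeping (tracking the $t^{\vec u}_{\vec\la}$-orbit and extracting vertex convergence from $L^1$-convergence via convexity) that the paper's one-paragraph proof leaves implicit.
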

 
 \begin{proof}
  The angles of non-fake corners are discrete by
  Lemma~\ref{lem_smoothanglesdiscrete} and
  Corollary~\ref{cor_smoothhiddendiscrete}.
  This means that there exists a neighborhood of $[\De_w]$
  in which all elements which have $N$ non-fake vertices
  must have all of the same angles as $[\De_w]$.  This is
  the open neighborhood described in the Lemma.
  Any semitoric polygon
  with fewer non-fake vertices than $[\De_w]$ is bounded
  away from $[\De_w]$ because the only ways to change the number
  of non-fake vertices are a corner chop or introducing a smooth
  angle into an edge of infinite length but by
  Lemma~\ref{lem_smoothanglesdiscrete} smooth angles are discrete.
 \end{proof}

 \begin{lemma}\label{lem_stpackdiscont}
  The map $\pack_\semitoric\colon \stpolyg\to[0,\infty]$ is discontinuous at every point.
 \end{lemma}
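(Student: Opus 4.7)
The plan is to adapt the argument from the toric case in Theorem~\ref{thm_toricpolycont} part~\ref{part_polythm1}: starting from a primitive representative $\De_w = (\De, (\ell_{\la_j}, +1, k_j)_{j=1}^\mf)$ of $[\De_w] \in \stpolyg$, I will construct a sequence $\{[\De_w^{(n)}]\}_{n=1}^\infty \to [\De_w]$ in $\mathcal{D}_{\ST}$ along which $\pack_\semitoric([\De_w^{(n)}]) \to 0$, while $\pack_\semitoric([\De_w]) > 0$ (the latter holds because $\De_w$ has at least one non-fake vertex and, by Definition~\ref{def_admsemitoric}, only Delzant and hidden corners support admissible semitoric simplices, which have positive area).

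I would enumerate the non-fake vertices $v_1, v_2, \ldots$ of $\De_w$ and, for each $n \geq 1$ and each $k$, perform an $\varep_{n,k}$-corner chop (Delzant or hidden, per Definition~\ref{def_stchop}) at $v_k$ with $\varep_{n,k} := 2^{-k}/n$. The result $\De_w^{(n)}$ keeps the same labels $(\ell_{\la_j}, +1, k_j)$ and, by Lemma~\ref{lem_stsimpleobs} and the analysis behind Definition~\ref{def_stchop}, is a primitive semitoric polygon whose non-fake vertices are precisely the pairs of new vertices produced by the chops. Each individual chop removes a region of Lebesgue area $O(\varep_{n,k}^2)$ from $\De$, so the total symmetric difference between $\De$ and $\De^{(n)}$ has Lebesgue measure $\leq C\sum_k \varep_{n,k}^2 = O(1/n^2)$. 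Since $\nu$ has bounded density and the metric $d_\P^\nu$ is a finite sum over $\vec{u}\in\{0,1\}^\mf$ of $\nu$-measures of symmetric differences of the translated polygons $t^{\vec{u}}_{\vec{\la}}(\De)$, the same $O(1/n^2)$ bound propagates and gives $\mathcal{D}_{\ST}([\De_w^{(n)}], [\De_w]) \to 0$.

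For the packing bound, every non-fake vertex of $\De_w^{(n)}$ is the product of the $k$-th chop for some $k$, and therefore has at least one incident edge of length $O(\varep_{n,k})$. By Definition~\ref{def_admsemitoric}, an admissible semitoric simplex of radius $r$ centered at such a vertex is the affine image of $\De(r^{1/2})$, whose edges at the origin have length $r^{1/2}$, so the short edge forces $r^{1/2} \leq C\,\varep_{n,k}$ and the simplex has Lebesgue area $\leq C'\varep_{n,k}^2$. Summing over the two new vertices per chop yields $\pack_\semitoric([\De_w^{(n)}]) \leq 2C'\sum_k \varep_{n,k}^2 = O(1/n^2) \to 0$, producing the required discontinuity gap with $\pack_\semitoric([\De_w]) > 0$.

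The main obstacle is that a primitive semitoric polygon $\De$ can have infinitely many non-fake vertices (it is allowed to be noncompact, and Lemma~\ref{lem_smoothanglesdiscrete} permits smooth angles accumulating only at $\pi$), so I must chop countably many corners \emph{simultaneously} with geometrically shrinking parameters and verify that the resulting object is still a well-defined element of $\stpolyg$. A secondary technical point is that hidden-corner chops look different in the different orbit representatives (via the unfolding in Figure~\ref{fig_hiddencornerchop}), so one must check that the chop descends to a well-defined operation on the $(G_\mf\times\mathcal{G})$-orbit and contributes the same $O(1/n^2)$ bound to each summand $\nu(t^{\vec{u}}_{\vec{\la}^1}(\De) \symdiff t^{\vec{u}}_{\vec{\la}^2}(\De^{(n)}))$ appearing in $d_\P^\nu$.
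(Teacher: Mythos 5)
Your proposal is correct, and it replaces the paper's three-case split by a single unified construction. The paper separately treats: the case of finitely many non-fake vertices with $\pack_\semitoric([\De_w])<\infty$ (chop every corner by a uniform $\varep$); the case of finitely many non-fake vertices with $\pack_\semitoric([\De_w])=\infty$ (argued to force $N=1$, $\mf=0$, then one chop suffices); and the case of infinitely many non-fake vertices (chop each vertex with a size calibrated so that the maximal simplex in the annulus $I_i$ has area at most $\varep/(N_i 2^{i+1})$, summing to $\leq\varep$). Your sequence with $\varep_{n,k}=2^{-k}/n$, combined with the explicit quadratic bound on the simplex area at a chopped vertex, absorbs the finite and infinite cases simultaneously, and the case $\pack_\semitoric([\De_w])=\infty$ is subsumed because $\lim_n \pack_\semitoric([\De_w^{(n)}]) = 0 \neq \infty$ already gives the gap. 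The quadratic bound itself is slightly sharper than what the paper states explicitly but follows directly from Definition~\ref{def_admsemitoric}: if the cut edge at a new vertex has length $c\varep$ with direction a primitive vector $v$, an admissible simplex of radius $r$ places a leg of length $r^{1/2}\abs{v}$ along that edge, forcing $r^{1/2}\leq \varep$ and hence area $= r/2 \leq \varep^2/2$, with constant independent of $k$. Making that estimate the load-bearing one is what lets the geometric series close the argument in a single stroke.

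Two technical points you correctly flag deserve a line each if written up. First, when $\De$ has infinitely many non-fake vertices, the chop sizes $\varep_{n,k}$ must additionally be dominated by a fixed fraction of the lengths of the two edges adjacent to $v_k$, since edge lengths may shrink faster than $2^{-k}$ and a chop that engulfs a neighboring vertex would change the combinatorics in an uncontrolled way; replacing $\varep_{n,k}$ by the minimum of $2^{-k}/n$ and a third of each adjacent edge length preserves the $O(1/n^2)$ bounds and fixes this. (The paper's infinite-vertex case has the same unaddressed issue.) Second, since hidden corner chops (Definition~\ref{def_stchop}) are defined via the unfolding $t_{\ell_{\la_j}}^{\pm1}$, one should check that the removed region and the resulting cut edge transform compatibly with all translated copies $t^{\vec u}_{\vec\la}(\De)$ appearing in $d_\P^\nu$; this is precisely what the definition is engineered to ensure, so the $O(1/n^2)$ symmetric-difference bound is indeed uniform in $\vec u$.
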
 
 
 \begin{proof}
  Primitive semitoric polygons must have
  at least one non-fake vertex.  Let 
  \[[\De_w] = [(\De, (\ell_{\la_j}, +1, k_j)_{j=1}^\mf)]\]
  be a semitoric polygon.
  First assume that $[\De_w]\in\stpolygn$ for some
  $N\geq 1$ and that $\pack_\semitoric([\De_w])<\infty$.
  Then for $\varep>0$ small enough define $[\De_w^\varep]$ to be
  the semitoric polygon produced by performing an 
  $\varep$\--corner chop at each
  non-fake vertex of $[\De_w]$. We have that
  \begin{equation}\label{eqn_stpackdisconteqn}
  \lim_{\varep\to0}d^\P_\semitoric ([\De], [\De_w^\varep]) = 0.
  \end{equation}
  A packing of $[\De_w^\varep]$ has
  at most $2N$ disjoint admissible simplices.  Since their side lengths
  are determined by the lengths of the adjacent
  edges, one of which is length $\varep$, we
  have that
  $\lim_{\varep\to 0}\pack_\semitoric ([\De_w^\varep]) = 0$.
  Since every semitoric polygon has positive optimal packing
  we have
  \[
   \lim_{\varep\to 0} \abs{\pack_\semitoric([\De_w])-\pack_\semitoric(\De_w^\varep)} = \pack_\semitoric([\De_w])>0
  \]
  and thus, in light of Equation \eqref{eqn_stpackdisconteqn},
  $\pack_\semitoric$ is discontinuous at $[\De_w]$.
  
  Suppose $[\De_w]\in\stpolygn$ for 
  some $N\geq 1$ and 
  $\pack_\semitoric([\De_w])=\infty$.
  Since $[\De_w]$ has only finitely many non-fake vertices, any admissible
  packing has only finitely many admissible simplices.
  Hence there is a vertex
  at which an arbitrarily large simplex fits. The only possible case is
  that $N=1$ and the polygon is of complexity zero. 
  Taking a corner chop of any size at the single non-fake vertex
  produces a polygon on which $\pack_\semitoric$ evaluates to a finite number, so
  $\pack_\semitoric$ is discontinuous at $[\De_w]$.
  
  Now suppose that $\pack_\semitoric ([\De_w])<\infty$ and
  $[\De_w]\in\stpolyg\setminus\bigcup_{N\geq1}\stpolygn$.
  For $i \in\Z_{\geq1}$ let $I_i \subset\R$ be
  given by
  $I_i = [-n,n]\setminus(-(n-1), n-1)$
  and let $N_i\in\Z_{\geq0}$ denote the number of 
  non-fake vertices of $[\De_w]$ 
  with $x$\--coordinate in
  $I_i$. This number is finite by the definition
  of a convex polygon and it is invariant under the action of
  $G_\mf\times\mathcal{G}$.  For $\varep>0$ small enough
  let $[\De_w^\varep]$ be a semitoric polygon which has a
  small corner chop at each non-fake vertex such that, at each vertex
  in $I_i$ for $i\in\Z_{\geq1}$,
  the largest possible admissible simplex that can 
  fit into that vertex has volume at most
  $\nicefrac{\varep}{(N_i S^{i+1})}$.  Then an admissible packing
  $R$ of $[\De_w^\varep]$ satisfies
  \[\mathrm{vol}_\P (R)\leq \sum_{i=1}^\infty \frac{\varep}{N_i 2^{i+1}}2N_i = \varep.\]
  Therefore
 $$\lim_{\varep\to0}d^\P_\semitoric ([\De_w],[\De_w^\varep]) = 0$$
  while
  $$\lim_{\varep\to0} \abs{\pack_\semitoric ([\De_w]) - \pack_\semitoric ([\De_w^\varep])} = \pack_\semitoric ([\De_w]) > 0$$
  and thus $\pack_\semitoric$ is not continuous at $[\De_w]$.
 \end{proof}
 
 For $[\De_w]=[(\De, (\ell_{\la_j}, +1, k_j)_{j=1}^\mf)] \in \stpolygn$
    with non-fake vertices $v_1, \ldots, v_N$,
    let $\pack_{\semitoric}^{\P,i} (\De)$ be
    the total volume of the optimal packing
    excluding all packings which have a simplex centered at $v_i$.
 \begin{theorem}\label{thm_stpolycont}
  Let $\pack_{\semitoric}\colon \stpolyg\to[0, \infty]$
  be the optimal semitoric polygon packing function. Then:
  \begin{enumerate}[font=\normalfont]
   \item\label{part_stpolythm1} $\pack_{\semitoric}$ is discontinuous at 
    each point in $\stpolyg$;
   \item\label{part_stpolythm2} the restriction 
    $\pack_{\semitoric}|_{\stpolygn}$ is continuous for each $N\in\Z_{\geq1}$;
   \item\label{part_stpolythm3}
    if $[\De_w] \in \stpolygn$
    then $\stpolygn$ is the largest
    neighborhood of $\De_w$ in $\stpolygn$ in which 
    $\pack_{\semitoric}$ is continuous if and only if 
    $\pack_{\semitoric}^{i} ([\De_w]) < \pack_{\semitoric} ([\De_w])$
    for all $1\leq i \leq N$.
  \end{enumerate}
 \end{theorem}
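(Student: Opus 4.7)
The plan is to adapt the strategy of Theorem~\ref{thm_toricpolycont} to the semitoric setting, using Lemma~\ref{lem_smoothanglesdiscrete}, Corollary~\ref{cor_smoothhiddendiscrete}, and Lemma~\ref{lem_stsimpleobs} as the key tools that replace the discreteness of smooth angles used in the toric case. Part~\eqref{part_stpolythm1} is already established by Lemma~\ref{lem_stpackdiscont}, so only parts~\eqref{part_stpolythm2} and~\eqref{part_stpolythm3} require work.

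For part~\eqref{part_stpolythm2}, I would fix $[\De_w]\in\stpolygn$ and apply Lemma~\ref{lem_stsimpleobs} to obtain an open neighborhood $U$ of $[\De_w]$ in $\stpolygn$ whose elements arise from $[\De_w]$ only by parallel translations of its edges; in particular, the angles at all $N$ non-fake vertices are constant on $U$, and the cuts $\ell_{\la_j}$, twisting indices $k_j$, and heights $h_j$ are unchanged. The admissibility condition of Definition~\ref{def_admsemitoric} for a simplex of radius $r$ at a non-fake vertex $v_i$ then reduces to a linear inequality on $r$ whose coefficients depend continuously on the translation parameters. The supremum in the definition of $\pack_\semitoric$ is a supremum of total volume over at most $N$ radii subject to these continuously varying linear constraints, and hence depends continuously on the point in $U$.

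For part~\eqref{part_stpolythm3}, first suppose $\pack_\semitoric^{i}([\De_w])=\pack_\semitoric([\De_w])$ for some non-fake vertex $v_i$. Let $[\De_w^{v_i,\varep}]$ denote the $\varep$-corner chop (or $\varep$-hidden corner chop, if $v_i$ is hidden) of Definition~\ref{def_stchop}. Then $\lim_{\varep\to 0}d_\P^\nu([\De_w],[\De_w^{v_i,\varep}])=0$, and every admissible packing of $[\De_w]$ avoiding $v_i$ restricts to one on $[\De_w^{v_i,\varep}]$ of the same volume as soon as $\varep$ is small, so $\lim_{\varep\to 0}\pack_\semitoric([\De_w^{v_i,\varep}])=\pack_\semitoric^i([\De_w])=\pack_\semitoric([\De_w])$; this exhibits continuity of $\pack_\semitoric$ on a set strictly larger than $\stpolygn$ around $[\De_w]$. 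Conversely, if $\pack_\semitoric^i([\De_w])<\pack_\semitoric([\De_w])$ for every $i$, then for any $i$ and any $\varep>0$, admissible packings of $[\De_w^{v_i,\varep}]$ push forward to admissible packings of $[\De_w]$ avoiding $v_i$, so $\pack_\semitoric([\De_w^{v_i,\varep}])\leq \pack_\semitoric^i([\De_w])$ is bounded away from $\pack_\semitoric([\De_w])$. By Lemma~\ref{lem_stsimpleobs}, every neighborhood of $[\De_w]$ in $\stpolyg$ not contained in $\stpolygn$ contains such corner chops, which rules out continuity on any larger set.

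The main technical obstacle will be carrying out the corner-chop comparison rigorously at hidden corners. Because the hidden corner chop is defined by conjugating the standard chop with $t_{\ell_{\la_j}}^{\pm1}$ rather than by a direct half-plane intersection, the correspondence between admissible semitoric simplices of $[\De_w]$ (avoiding the hidden vertex) and those of $[\De_w^{v_i,\varep}]$ requires unfolding across $\ell_{\la_j}$ before invoking the standard Delzant correspondence of Theorem~\ref{thm_toricgeomandcombo}. Verifying that this correspondence is volume-preserving and continuous in $\varep$ will rest on the fact that the action of $G_\mf\times\mathcal{G}$ preserves both admissibility and Euclidean volume of simplices, so the operations $t_{\ell_{\la_j}}^{\pm1}$ merely select a representative of the equivalence class. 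Once this is in hand, parts~\eqref{part_stpolythm2} and~\eqref{part_stpolythm3} follow by the strategy above, in parallel with the toric argument.
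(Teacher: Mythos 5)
Your argument mirrors the paper's proof: Lemma~\ref{lem_stpackdiscont} for part~\eqref{part_stpolythm1}, Lemma~\ref{lem_stsimpleobs} together with continuity of $\pack_\semitoric$ under parallel edge translations for part~\eqref{part_stpolythm2}, and corner chops and hidden corner chops (Definition~\ref{def_stchop}) for both directions of part~\eqref{part_stpolythm3}. The only imprecision is in your converse of part~\eqref{part_stpolythm3}: an admissible packing of $[\De_w^{v_i,\varep}]$ does not literally push forward to a packing of $[\De_w]$ avoiding $v_i$, since the simplices centered at the two new vertices created by the chop have no counterpart in $[\De_w]$, so the asserted pointwise inequality $\pack_\semitoric([\De_w^{v_i,\varep}])\leq\pack_\semitoric^{i}([\De_w])$ need not hold; however, those extra simplices have total volume $O(\varep^2)$, so the limit $\lim_{\varep\to0}\pack_\semitoric([\De_w^{v_i,\varep}])=\pack_\semitoric^{i}([\De_w])$ is still valid, which is all the argument (and the paper) requires.
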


 \begin{proof}
  Part \eqref{part_stpolythm1} is the content of Lemma~\ref{lem_stpackdiscont}.
  
  By Lemma~\ref{lem_stsimpleobs}, given any
  $[\De_w]\in\stpolygn$,
  there exists a neighborhood of $[\De_w]$ in $\stpolygn$ containing exclusively
  orbits of polygons formed by translating the sides of $\De_w$ in a parallel way.
  Hence part~\eqref{part_stpolythm2} follows from this because $\pack_\semitoric$ is continuous
  on such transformations. 
  
  For Part \eqref{part_stpolythm3} suppose first that
  $\pack_{\semitoric} ([\De_w]) = \pack_{\semitoric}^{i} ([\De_w])$
  for some $i\in\{\,1, \ldots, N\,\}$. This means that there exists some
  optimal packing avoiding the $i^{\mathrm{th}}$ non-fake vertex. For
  $\varep>0$ let $[\De_w^\varep]$ be the result of an 
  $\varep$\--corner chop at the $i^{\mathrm{th}}$ vertex and
  notice that
  $\lim_{\varep\to0}d^\P_\semitoric ([\De_w], [\De_w^\varep])= 0$ and  
  $\lim_{\varep\to0} \pack_\semitoric ([\De_w^\varep]) = \pack_\semitoric ([\De_w])$.
Thus there exists some set larger
  than $\stpolygn$ on which 
  $\pack_\semitoric$ is continuous, as 
  shown
  in Figure~\ref{fig_conthiddencornerchop}.
  \begin{figure}[ht]
  \centering
   \includegraphics[height=60pt]{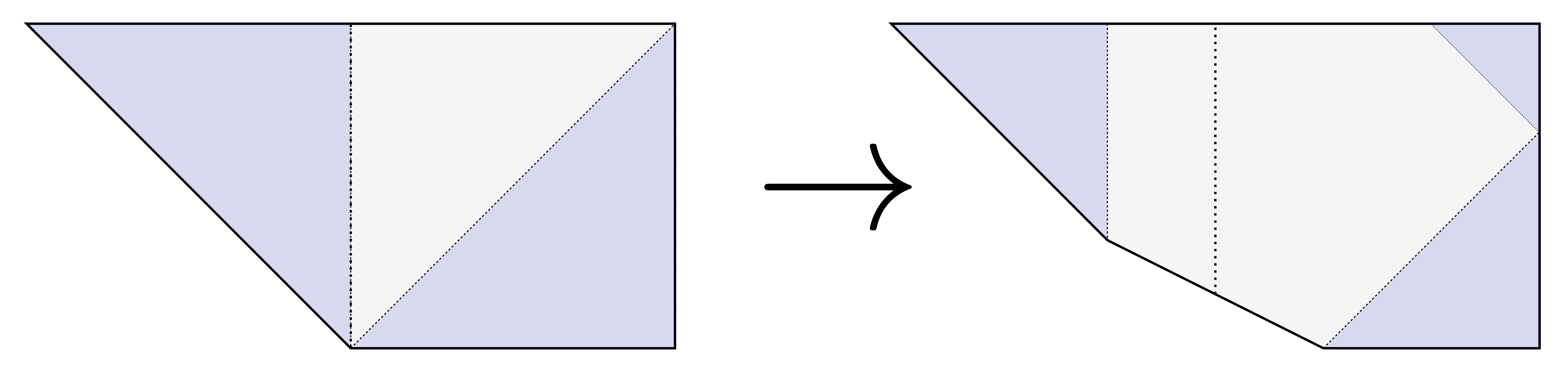}
   \caption{Corner chop of a corner not used in the optimal packing.}
   \label{fig_conthiddencornerchop}
  \end{figure}
  
  Finally, to show the converse assume that $[\De_w]$ satisfies
  $\pack_{\semitoric}^{\P,i} ([\De_w]) < \pack_{\semitoric}^\P ([\De_w])$
  for all $1\leq i \leq N$.
  By Lemma~\ref{lem_stsimpleobs} there
  is an open set around $[\De_w]$ in which the only elements not in 
  $\pstpolygn$ are obtained from $[\De_w]$
  by iterations of corner chops,
  parallel translations of the edges, and
  introducing a smooth angle into an edge of infinite length.  For $\varep>0$
  let $[\De_w^\varep]$ be any $\varep$\--corner chop at the 
  $i^{\mathrm{th}}$ non-fake vertex of $[\De_w]$.  Then
  \[\lim_{\varep\to0}\pack_\semitoric ([\De_w^\varep]) = \pack_{\semitoric}^{i} ([\De_w]) < \pack_{\semitoric} ([\De_w])\]
  and the result follows.
  \end{proof}

 Notice that the quotient map $\sympST\to\stpolyg$ is continuous and the
 metric on $\sympST$ is the sum of the metric on $\stpolyg$ and the 
 metric on the remaining components.  Thus,
 Theorem~\ref{thmintro} part~\ref{thmintro_part_semitoric} follows from Theorem~\ref{thm_stpolycont}. 
 For $(M, \om, F)\in\sympplain^{4,S^1\times\R}_{\mathrm{ST}, N}$ 
 with fixed points $p_1, \ldots, p_N\in M$ let
 \[
  \semitoricpack^i(M) = \left( \frac{\sup\{\,\mathrm{vol}(P)\mid P\subset M\textrm{ is a semitoric ball packing of $M$ and }p_i\notin P \,\}}{\vol (\mathrm{B}^{4})}\right)^{\frac{1}{4}}.
 \]

 \begin{prop}\label{prop_stcontnbhd}
  Let $N\geq 1$.  
  If $(M, \om, F)\in\sympplain^{4, S^1\times\R}_{\mathrm{ST}, N}$ 
  then $\sympplain^{4, S^1\times\R}_{\mathrm{ST}, N}$ is the largest
  neighborhood of $M$ in $\sympST$ in which $\semitoricpack$ 
  is continuous if and only if 
  $ \semitoricpack^i (M) < \semitoricpack (M)$
  for all $1\leq i \leq N$.
 \end{prop}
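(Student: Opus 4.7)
The plan is to reduce this statement to the polygon-level statement Theorem~\ref{thm_stpolycont}\eqref{part_stpolythm3} via the dictionary established in Proposition~\ref{prop_stgeomandcombo}. Fix $(M,\om,F)\in\sympplain^{4,S^1\times\R}_{\mathrm{ST},N}$ with semitoric polygon invariant $[\De_w]$, where $\De_w=(\De,(\ell_{\la_j},+1,k_j)_{j=1}^{\mf})$ is primitive. By Definition~\ref{def_Nvert}, $[\De_w]\in\stpolygn$. Under the straightening map $\widetilde{F}\in\mathcal{F}_M$, Proposition~\ref{prop_stgeomandcombo} yields a bijection between semitoric ball packings $P$ of $M$ and admissible semitoric packings $R=\widetilde{F}(P)$ of $[\De_w]$. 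The volume correspondence $\vol(P)=2\pi^2\,\mathrm{vol}_\P(R)$ implies
\[
\semitoricpack(M)=\left(\tfrac{2\pi^{2}}{\vol(\mathrm{B}^{4})}\right)^{\nicefrac{1}{4}}\,\pack_\semitoric([\De_w])^{\nicefrac{1}{4}},
\]
so $\semitoricpack(M)$ is a strictly increasing continuous function of $\pack_\semitoric([\De_w])$.

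Next I would identify the index sets. Non-fake vertices of $[\De_w]$ are in bijection with the elliptic--elliptic fixed points $p_1,\dots,p_N$ of the $(S^1\times\R)$\--action on $M$, since both Delzant and hidden corners of the semitoric polygon arise from elliptic--elliptic singularities of $F$ via the analogue of Delzant's construction used in Proposition~\ref{prop_stgeomandcombo} (the Atiyah--Guillemin--Sternberg/centered-convexity argument in the proof of that proposition identifies the vertex with $\widetilde{F}(p_i)$). Under the bijection $P\leftrightarrow R$, semitoric packings of $M$ that avoid $p_i$ correspond precisely to admissible packings of $[\De_w]$ that avoid the $i^{\mathrm{th}}$ non-fake vertex, hence
\[
\semitoricpack^{i}(M)=\left(\tfrac{2\pi^{2}}{\vol(\mathrm{B}^{4})}\right)^{\nicefrac{1}{4}}\,\pack_\semitoric^{i}([\De_w])^{\nicefrac{1}{4}}.
\]
In particular, the strict inequality $\semitoricpack^i(M)<\semitoricpack(M)$ for all $i$ is equivalent to $\pack_\semitoric^i([\De_w])<\pack_\semitoric([\De_w])$ for all $i$.

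Then I would transfer the topological statement. The topology on $\sympST$ is the one pulled up from the quotient $\msympST$, whose metric $\mathcal{D}_{\rm ST}$ splits as the sum of $d_\P^\nu$ on the polygon component and independent metrics on the Taylor series and volume components. Since $\semitoricpack$ depends only on the polygon invariant (Proposition~\ref{prop_stgeomandcombo} uses only $\widetilde{F}$ and $\De_w$), the map $\hat{\semitoricpack}$ on $\mhamST$ factors through the projection $\Phi(\mhamST)\to\stpolyg$. Consequently $\semitoricpack$ is continuous at $(M,\om,F)$ on some open $U\subset\sympST$ if and only if $\pack_\semitoric$ is continuous at $[\De_w]$ on the projection of $U$ in $\stpolyg$; and by construction, $\sympplain^{4,S^1\times\R}_{\mathrm{ST},N}=\Phi^{-1}(\ingred^N)$ projects onto $\stpolygn$ with no restrictions on the other ingredients. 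Hence $\sympplain^{4,S^1\times\R}_{\mathrm{ST},N}$ is the largest neighborhood of $M$ in $\sympST$ on which $\semitoricpack$ is continuous if and only if $\stpolygn$ is the largest neighborhood of $[\De_w]$ in $\stpolyg$ on which $\pack_\semitoric$ is continuous. Applying Theorem~\ref{thm_stpolycont}\eqref{part_stpolythm3} then gives the equivalence with $\pack_\semitoric^i([\De_w])<\pack_\semitoric([\De_w])$ for every $i$, which by the previous paragraph is equivalent to $\semitoricpack^i(M)<\semitoricpack(M)$ for every $i$.

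The main subtlety to handle carefully is the ``largest neighborhood'' clause: I must check that the other semitoric ingredients (Taylor series, volume heights) genuinely do not affect continuity, so that no larger neighborhood could arise by varying those components. This follows because the polygon projection $\ingred\to\stpolyg$ is an open continuous map with respect to the sum metric, and $\semitoricpack$ is constant along the fibers of this projection; hence continuity of $\semitoricpack$ at $M$ on a neighborhood $U$ is equivalent to continuity of $\pack_\semitoric$ on the open projected image of $U$, reducing the maximality question entirely to the polygon side, where Theorem~\ref{thm_stpolycont}\eqref{part_stpolythm3} closes it.
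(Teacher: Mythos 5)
Your proposal is correct and follows essentially the same route the paper takes: the paper states Proposition~\ref{prop_stcontnbhd} immediately after the observation that the quotient map $\sympST\to\stpolyg$ is continuous and the metric on $\msympST$ decomposes as a sum with the polygon-component $d^\nu_\P$ as one summand, so that the result reduces to Theorem~\ref{thm_stpolycont}\eqref{part_stpolythm3} via the packing dictionary of Proposition~\ref{prop_stgeomandcombo}. You have simply made explicit the steps the paper treats as immediate, in particular the identification of non-fake vertices with elliptic--elliptic fixed points, the strictly-increasing change of variables $\semitoricpack(M)=(2\pi^2/\vol(\mathrm{B}^4))^{1/4}\,\pack_\semitoric([\De_w])^{1/4}$, and the openness of the projection $\ingredmfk\to\stpolygmfk$ together with constancy of $\semitoricpack$ along its fibers, which is exactly what justifies that no larger neighborhood of continuity can be obtained by varying the Taylor series or volume invariants.

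One small caveat worth being aware of, although the paper itself does not address it: Definition~\ref{def_admsemitoric} of an admissible semitoric simplex formally involves the height invariants $h_j$ through the set $\De^{\vec u}$, even though $\pack_\semitoric$ is declared as a function on $\stpolyg$ alone. Your statement that ``$\semitoricpack$ is constant along the fibers of the polygon projection'' therefore implicitly relies on the fact that varying $h_j$ changes the admissible simplices only through a measure-zero slit, which does not affect the supremum of packing volumes. Since the paper makes the same tacit assumption when treating $\pack_\semitoric$ as a function of $[\De_w]$ only, your argument is at least as rigorous as the paper's, but it is a point that a fully self-contained write-up should flag.
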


 Theorem~\ref{thmintro} part~\ref{thmintro_part_semitoric} and Proposition~\ref{prop_stcontnbhd} are illustrated
 in Figure~\ref{fig_semitoricfamilies}.
 
 \begin{figure}[ht]
 \centering
  \includegraphics[height=95pt]{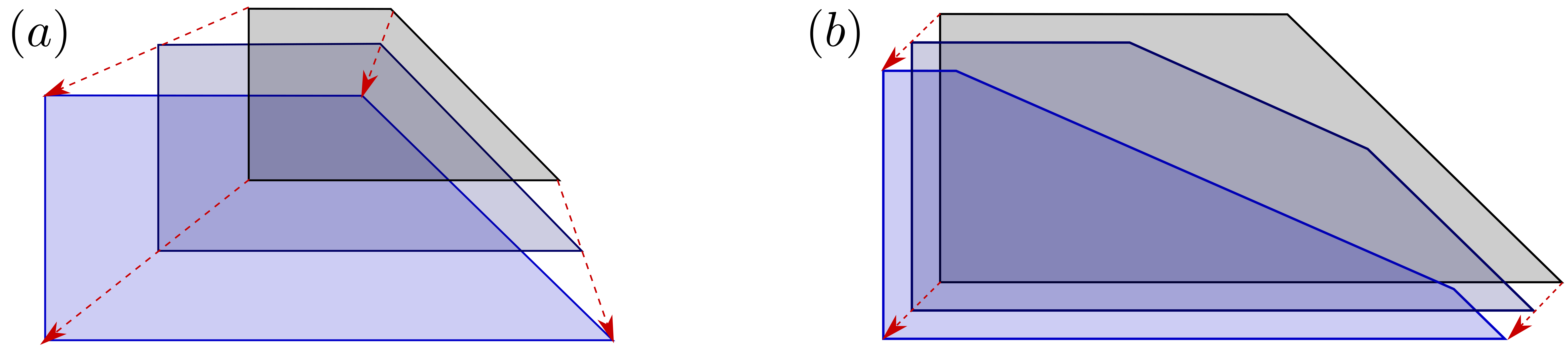}
  \caption{Continuous families of primitive semitoric polygons on which $(a)$
  $\semitoricpack$ is continuous and $(b)$ $\semitoricpack$ is not continuous.}
  \label{fig_semitoricfamilies}
 \end{figure}

 \begin{definition}
   The \emph{semitoric radius capacity} is the symplectic $(S^1\times\R)$\--capacity 
    $\semitoricpack_{\mathrm{rad}}\colon \sympST\to[0,\infty]$ given by
    $$
     \semitoricpack_{\mathrm{rad}}(M)=\mathrm{sup}\{\,r>0\mid\textrm{there exists a semitoric embedding } \mathrm{B}^4(r)\imm M\,\}.
    $$
  \end{definition}

 It can be shown that $\semitoricpack_{\mathrm{rad}}$ is a $(S^1\times\R)$\--capacity in the same
 way that it was shown that $\semitoricpack$ is a $(S^1\times\R)$\--capacity.
 Recall that $\sympplain^{2n,\R^n}_{\mathrm{T}}$ is the symplectic $\R^n$\--category
 which is the collection of toric manifolds with their $\T^n$\--action lifted to an
 $\R^n$\--action. Let $\sympplain^{2n,\R^n}_{\mathrm{T}, N}$ denote those systems with
 exactly $N$ points fixed by the $\R^n$\--action.
 By repeating the proofs of the continuity
 results Theorem~\ref{thmintro} part~\ref{thmintro_part_toric}, Proposition~\ref{prop_toriccontnbhd},
 Theorem~\ref{thmintro} part~\ref{thmintro_part_semitoric}, and Proposition~\ref{prop_stcontnbhd}
 we immediately have the following result, that yields Theorem~\ref{thmintro} part~\ref{thmintro_part_gromov}.

 \begin{theorem}
  The maps $\cBnn|_{\sympplain^{2n,\R^n}_{\mathrm{T}}}$ and $\semitoricpack_{\mathrm{rad}}$ are
  discontinuous everywhere on their domains and
  the restrictions $\cBnn|_{\sympplain^{2n,\R^n}_{\mathrm{T}, N}}$ and $\semitoricpack_{\mathrm{rad}}|_{\sympplain^{4, S^1\times\R}_{\mathrm{ST}, N}}$
  are both continuous. 
  For $(M, \om, F)\in\sympplain^{2n, \R^n}_{\mathrm{T},N}$ 
  the set $\sympplain^{2n,\R^n}_{\mathrm{T},N}$ is not the largest
  neighborhood of $M$ in $\sympplain^{2n,\R^n}_{\mathrm{T}}$ in which 
  $\cBnn|_{\sympplain^{2n,\R^n}_{\mathrm{T}}}$ is continuous and for
  $(M, \om, F)\in\sympplain^{4,S^1\times\R}_{\mathrm{ST}, N}$ 
  the set $\sympplain^{4,S^1\times\R}_{\mathrm{ST}, N}$ is the largest
  neighborhood of $M$ in $\sympST$ in which $\semitoricpack_{\mathrm{rad}}$ 
  is continuous if and only if $N=1$.
 \end{theorem}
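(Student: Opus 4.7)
The plan is to follow the two-step strategy used to prove Theorem~\ref{thm_toricpolycont} and Theorem~\ref{thm_stpolycont}: first reduce the question about the manifold capacity to a combinatorial statement about admissible (resp.\ admissible semitoric) simplices in the associated Delzant (resp.\ semitoric) polygon via Theorem~\ref{thm_toricgeomandcombo} and Proposition~\ref{prop_stgeomandcombo}, then analyze how corner chops and parallel edge translations affect this combinatorial quantity. The only new ingredient is that the radius capacity depends only on the \emph{maximum} over vertices of the size of the largest admissible simplex fitting at each vertex, rather than on the sum of the sizes of all simplices.

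For the discontinuity-everywhere and fixed-$N$ continuity claims, the arguments are essentially verbatim copies of the proofs of Theorem~\ref{thm_toricpolycont} parts~\ref{part_polythm1} and~\ref{part_polythm2}, and Theorem~\ref{thm_stpolycont} parts~\ref{part_stpolythm1} and~\ref{part_stpolythm2}. Simultaneously $\varep$-corner-chopping every non-fake vertex forces the largest admissible simplex in the whole polygon to have size $O(\varep)$, yielding a discontinuity at every point. For the fixed-$N$ strata, Proposition~\ref{prop_simpleobs} and Lemma~\ref{lem_stsimpleobs} show that a small enough neighborhood of a polygon with $N$ non-fake vertices consists of polygons obtained by parallel edge translations, and the size of the largest admissible simplex at a given vertex varies continuously under such translations, hence so does the maximum over the finitely many vertices.

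The substantive new content is the largest-neighborhood statement, and this is where radius and packing diverge. Write $r_1, \ldots, r_N$ for the sizes of the largest admissible (semitoric) simplices at the non-fake vertices of the polygon associated to $M$, so up to normalization the capacity equals $r = \max_i r_i$. A sufficiently small $\varep$-corner chop at vertex $i_0$ decreases $r_{i_0}$ to $O(\varep)$ while leaving the other $r_i$ unchanged, and no larger admissible simplex can appear along the newly introduced edge because that edge has length $\varep$. Hence the capacity after the chop is $\max_{i \neq i_0} r_i$, which equals $r$ if and only if some vertex $j \neq i_0$ already attains the maximum. Thus continuity extends past $\sympplain^{2n,\R^n}_{\mathrm{T},N}$ (respectively $\sympplain^{4,S^1\times\R}_{\mathrm{ST},N}$) through a corner chop at $i_0$ precisely when such a $j$ exists, and this can be arranged whenever $N \geq 2$ by choosing $i_0$ to be either a non-maximal vertex or one of several tied maximizers.

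Applying this test: in the toric case, any Delzant polytope has $N \geq n+1 \geq 2$ non-fake vertices, so such an $i_0$ always exists regardless of where the maximum sits; hence $\sympplain^{2n,\R^n}_{\mathrm{T},N}$ is never the largest neighborhood of continuity. In the semitoric case, $N=1$ forces the unique vertex to be the sole maximizer, so every corner chop strictly decreases $\semitoricpack_{\mathrm{rad}}$; by Lemma~\ref{lem_stsimpleobs} the remaining nearby deformations lie within $\sympplain^{4,S^1\times\R}_{\mathrm{ST},1}$, which is therefore the largest continuity neighborhood. For $N \geq 2$, the chop-extension argument above rules this out. The main technical point to verify carefully is the claim that a small corner chop does not introduce a new larger admissible simplex along the chop face, but this is immediate from the definitions since any admissible (semitoric) simplex adjacent to an edge of length $\varep$ has radius bounded by $O(\varep)$.
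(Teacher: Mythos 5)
The proposal is essentially correct and follows the same route the paper indicates: reduce to the combinatorics of admissible (semitoric) simplices via Theorem~\ref{thm_toricgeomandcombo} and Proposition~\ref{prop_stgeomandcombo}, then re-run the corner-chop and parallel-translation arguments of Theorems~\ref{thm_toricpolycont} and~\ref{thm_stpolycont}. The paper disposes of this result in one sentence (``by repeating the proofs of\ldots''), so your write-up actually supplies the one genuinely new observation that the paper leaves implicit: since the radius capacity is $\max_i r_i$ rather than a sum, the analogue of the paper's extension criterion ``some optimal packing avoids vertex $i$'' becomes ``some $j\neq i$ attains the maximum,'' and for a max this can always be arranged once $N\geq 2$ (pick either a non-maximizing vertex or one of several tied maximizers), whereas for a sum it can fail for every $i$. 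Combined with $N\geq n+1\geq 2$ for compact Delzant polytopes, this gives the unconditional non-maximality in the toric case, and the iff-$N=1$ dichotomy in the semitoric case.

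Two small imprecisions worth flagging, neither fatal. First, a corner chop at $i_0$ does not literally leave $r_i$ ($i\neq i_0$) unchanged if the optimal simplex at $i$ reaches into the chopped-off corner; what you need and in fact have is $r_i(\De_\varep)\to r_i(\De)$ as $\varep\to 0$, which is all that is required for continuity through the chop. Second, in the $N=1$ semitoric case Lemma~\ref{lem_stsimpleobs} only guarantees that nearby polygons lie in $\bigcup_{N'\geq 1}\stpolygnprime$, not in $\stpolygn[1]$; you still need to note (as in the proof of Theorem~\ref{thm_stpolycont} part~(\ref{part_stpolythm3})) that the elements with more non-fake vertices arise from corner chops or from inserting a smooth angle on an unbounded edge, and that your chop argument rules out the former while the latter is handled as in the packing case. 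The paper itself is equally terse on this last point, so your level of detail is consistent with theirs.
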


\begin{remark}
There are many examples of classical symplectic capacities
(see for instance~\cite{CiHoLaSc2007}), and
it would be of interest to adapt these capacities to the equivariant
category. It would also be useful to construct symplectic $G$\--capacities for more general integrable systems.
In particular, integrable systems where a complete list 
of invariants is not known (that is, the vast majority). 

In~\cite{GoPeSa2016} the authors give a lower bound on the number of fixed points of 
a circle action on a 
compact almost complex  manifold $M$ with nonempty fixed point set, under the condition 
that the Chern number $c_1 c_{n-1}[M]$ vanishes.  These results apply to a class of manifolds 
which do not support any Hamiltonian circle action with isolated fixed points,
and which includes all symplectic
Calabi\--Yau manifolds~\cite{Yau2011} (see~\cite[Proposition 2.15]{GoPeSa2016}).
The class of symplectic Calabi\--Yau manifolds is thus of particular interest
because they do not admit integrable systems of toric or semitoric type.
Also, there is work extending the classification
in~\cite{PeVNconstruct2011} and related results to higher dimensions~\cite{Waaction2015},
so one could extend the semitoric packing capacity to higher dimensional semitoric systems, 
for which there is currently no classification.

Another interesting direction would be to generalize the work in~\cite{Ku1995} to our setting.
There, the author constructs infinite dimensional symplectic capacities for a general class of Hamiltonian PDEs.
In case the PDEs preserves some $G$\--action, one may expect to construct also $G$\--capacities in such infinite
dimensional setting, and this may give new interesting result on the long time behavior of solutions.

Symplectic capacities are also of interest from a physical view point,
for instance in~\cite{GoLu2009} the authors describe interrelations between symplectic capacities
and the uncertainty principle. It would be interesting to explore
similar connections to symplectic $G$\--capacities.
\end{remark}

\begin{remark}
In this paper $G$ can be a compact Lie group (like in the
case of symplectic toric manifolds)
or a non-compact Lie group (like in the case of semitoric
systems). In general there are obstructions to the existence of
effective $G$\--actions on compact and non-compact manifolds,
even in the case that the $G$\--action is only required
to be smooth. For instance, in~\cite[Corollary in page 242]{Yau1977} it is
proved that if $N$ is an $n$\--dimensional manifold on which
a compact connected Lie group $G$ acts effectively and there
are $\sigma_1,\ldots,\sigma_n \in {\rm H}^1(M,\mathbb{Q})$
such that $\sigma_1\cup \ldots \cup \sigma_n \neq 0$ then
$G$ is a torus and the $G$\--action is locally free. In~\cite{Yau1977}
Yau also proves several other results giving restrictions on
$G$, $M$, and the fixed point set $M^G$. If the $G$\--action
is moreover assumed to be symplectic or K\"ahler, there are
even more non-trivial constraints. Therefore the class of
symplectic manifolds for which one can define a notion of
symplectic $G$\--capacity with $G$ non-trivial 
is in general much more restrictive than the class of
all symplectic manifolds.
\end{remark}

\bibliographystyle{amsplain}
\bibliography{biblio}

{\small
  \noindent
  \\
  {\bf Alessio Figalli} \\
  University of Texas at Austin\\
  Mathematics Dept. RLM 8.100 \\
  2515 Speedway Stop C1200\\
  Austin TX, 78712-1082, USA.\\
  {\em E\--mail}: \texttt{figalli@math.utexas.edu} \\
  \noindent
   \\
  {\bf Joseph Palmer} \\
  {\bf \'Alvaro Pelayo} \\
  University of California, San Diego\\ 
  Department of Mathematics\\
  9500 Gilman Drive \#0112\\
  La Jolla, CA 92093-0112, USA.\\
  {\em E\--mail}: \texttt{j5palmer@ucsd.edu} \\
  {\em E\--mail}: \texttt{alpelayo@ucsd.edu} \\
  \noindent
 }   

\end{document}